\documentclass[a4paper, 11pt]{article}

\usepackage[english]{babel}

\usepackage{amssymb}
\usepackage{array,amsmath}
\usepackage{amsthm}

\usepackage{mathtools}

\usepackage{dsfont}

\usepackage{empheq}

\newcommand{\bP}{\mathbb{P}}
\newcommand{\cP}{\mathcal{P}}
\newcommand{\bE}{\mathbb{E}}

\newcommand{\cF}{\mathcal{F}}
\newcommand{\cG}{\mathcal{G}}

\newcommand{\bR}{\mathbb{R}}

\newcommand{\1}{\mathds{1}}

\usepackage{bm}

\newtheorem{theorem}{Theorem}

\newtheorem{proposition}{Proposition}

\theoremstyle{remark}

\usepackage{graphicx}
\usepackage{tikz}
\usetikzlibrary{arrows.meta}
\usetikzlibrary{bending}
\usepackage{pgfplots}
\pgfplotsset{compat=1.15}

\usepackage{subcaption}

\usepackage{natbib}

\usepackage{multirow}
\usepackage{booktabs}
\usepackage{longtable}
\makeatletter
\newbox\LT@firstfoot
\def\endfirstfoot{\LT@end@hd@ft\LT@firstfoot}
\newdimen\LT@footdiff
\def\LT@start{%
  \let\LT@start\endgraf
  \endgraf\penalty\z@
  \vskip\LTpre
  \LT@footdiff-\ht\LT@foot
  \advance\LT@footdiff\ht\LT@firstfoot
  \dimen@\pagetotal
  \advance\dimen@ \ht\ifvoid\LT@firsthead\LT@head\else\LT@firsthead\fi
  \advance\dimen@ \dp\ifvoid\LT@firsthead\LT@head\else\LT@firsthead\fi
  \advance\dimen@ \ht\ifvoid\LT@firstfoot\LT@foot\else\LT@firstfoot\fi
  \dimen@ii\vfuzz
  \vfuzz\maxdimen
  \setbox\tw@\copy\z@
  \setbox\tw@\vsplit\tw@ to \ht\@arstrutbox
  \setbox\tw@\vbox{\unvbox\tw@}%
  \vfuzz\dimen@ii
  \advance\dimen@ \ht
      \ifdim\ht\@arstrutbox>\ht\tw@\@arstrutbox\else\tw@\fi
  \advance\dimen@\dp
      \ifdim\dp\@arstrutbox>\dp\tw@\@arstrutbox\else\tw@\fi
  \advance\dimen@ -\pagegoal
  \ifdim \dimen@>\z@\vfil\break\fi
  \global\@colroom\@colht
  \ifvoid\LT@firstfoot
    \ifvoid\LT@foot
    \else
      \advance\vsize-\ht\LT@foot
      \global\advance\@colroom-\ht\LT@foot
      \dimen@\pagegoal\advance\dimen@-\ht\LT@foot\pagegoal\dimen@
      \maxdepth\z@
    \fi
  \else
    \advance\vsize-\ht\LT@firstfoot
    \global\advance\@colroom-\ht\LT@firstfoot
    \dimen@\pagegoal\advance\dimen@-\ht\LT@firstfoot\pagegoal\dimen@
    \maxdepth\z@
  \fi
  \ifvoid\LT@firsthead\copy\LT@head\else\box\LT@firsthead\fi\nobreak
  \output{\LT@output}%
}
\def\LT@output{%
  \ifnum\outputpenalty <-\@Mi
    \ifnum\outputpenalty > -\LT@end@pen
      \LT@err{floats and marginpars not allowed in a longtable}\@ehc
    \else
      \setbox\z@\vbox{\unvbox\@cclv}%
      \ifdim \ht\LT@lastfoot>\ht\LT@foot
        \dimen@\pagegoal
        \advance\dimen@-\ht\LT@lastfoot
        \ifdim\dimen@<\ht\z@
          \setbox\@cclv\vbox{\unvbox\z@\copy\LT@foot\vss}%
          \@makecol
          \@outputpage
          \setbox\z@\vbox{\box\LT@head}%
        \fi
      \fi  
      \global\@colroom\@colht
      \global\vsize\@colht   
      \vbox
        {\unvbox\z@\box\ifvoid\LT@lastfoot\LT@foot\else\LT@lastfoot\fi}%
    \fi
  \else
    \ifvoid\LT@firstfoot
      \setbox\@cclv\vbox{\unvbox\@cclv\copy\LT@foot\vss}%
      \@makecol
      \@outputpage
      \global\vsize\@colroom
    \else
      \setbox\@cclv\vbox{\unvbox\@cclv\box\LT@firstfoot\vss}%
      \@makecol
      \@outputpage
      \global\advance\@colroom\LT@footdiff
      \global\vsize\@colroom
    \fi
    \copy\LT@head\nobreak
  \fi
}
\makeatother

\usepackage{lipsum}

\usepackage[shortlabels]{enumitem}

\usepackage{todonotes}

\usepackage{algorithm}
\usepackage{algpseudocode}
\usepackage{float}
\newfloat{algorithm}{t}{lop}

\usepackage[top=25mm, bottom=25mm, left=25mm, right=25mm]{geometry}

\newcommand{\relmiddle}[1]{\mathrel{}\middle#1\mathrel{}}

\newcommand \Template[2]{
\usepackage{fancyhdr}
\pagestyle{fancy}
\fancyhf{}
\fancyhead[RO]{#1}
\fancyhead[LO]{#2}
\fancyfoot[CO]{\thepage}

\renewcommand{\headrulewidth}{1pt}
\renewcommand{\footrulewidth}{1pt}}

\usepackage{titling}

\newcommand \linedabstractkw[2]{
  \renewcommand\maketitlehookd{%
    \mbox{}\medskip\par
    \centering
    \hrule\medskip
    \begin{minipage}{0.9\textwidth}
    \footnotesize \textbf{Abstract}\\ \footnotesize #1\\
    
	\small \textit{Keywords: } \small #2
    \end{minipage}\medskip\hrule\medskip
    }      
}

\newcommand \ACKNOWLEDGMENT[1]{
	\section*{Acknowledgments}
	\small #1
}

\definecolor{darkgreen}{rgb}{0.0, 0.5, 0.13}

\newcommand{\prob}[1]{{\mathbb P}(#1)}

\title{
Tight tail probability bounds for distribution-free decision making}

\author{Ernst Roos\thanks{Corresponding author: e.j.roos@tilburguniversity.edu}, Ruud Brekelmans, Wouter van Eekelen \\
\small Department of Econometrics and Operations Research, Tilburg University  \\ 
Dick den Hertog \\
\small University of Amsterdam \\
Johan van Leeuwaarden \\
\small Department of Econometrics and Operations Research, Tilburg University }

\linedabstractkw{
Chebyshev's inequality provides an upper bound on the tail probability of a random variable based on its mean and variance. While tight, the inequality has been criticized for only being attained by pathological distributions that abuse the unboundedness of the underlying support and are not considered realistic in many applications. 
We provide alternative tight lower and upper bounds on the tail probability given a bounded support, mean and mean absolute deviation of the random variable. We obtain these bounds as exact solutions to semi-infinite linear programs. We leverage the bounds for distribution-free analysis of the newsvendor model, monopolistic pricing, and stop-loss reinsurance. We also exploit the bounds for safe approximations of sums of correlated random variables, and to find convex reformulations of single and joint ambiguous chance constraints that are ubiquitous in distributionally robust optimization. 
}{Chebyshev inequality, probability bounds, distributionally robust optimization, chance constraints}

\Template{Roos et al.}{Tight tail probability bounds for distribution-free decision making}{}

\begin{document}
\maketitle


\section{Introduction} \label{sec: intro}

 Chebyshev's inequality  provides an upper bound on the tail probability of a random variable using only its first two moments~\citep{bienayme1853considerations,chebyshev1867valeurs}. 
 Due to this distribution-free nature, Chebyshev's inequality is widely applicable. 
 Let the ambiguity set $\mathcal{P}_{(\mu,\sigma)}$ contain all distributions with a given mean $\mu$ and variance $\sigma^2$, and let the random variable $X$ follows some distribution $\mathbb{P} \in \mathcal{P}_{(\mu,\sigma)}$. Chebyshev's inequality (the one-sided version also known as Cantelli's inequality) then follows from the worst-case distribution that solves the optimization problem
\begin{equation}\label{}
\bP \left( X \geq t \right)\leq \sup\limits_{X\sim\mathbb{P} \in \mathcal{P}_{(\mu,\sigma)}}\bE_{\mathbb{P}} \left[\1\{ X\geq t \}\right]  = \frac{\sigma^2}{\sigma^2 + \left(t - \mu\right)^2}.
\end{equation}
This inequality is tight, meaning it cannot be improved in general. However, Chebyshev's inequality can be criticized for only being attained by pathological distributions that abuse the unboundedness of the underlying support. Indeed, the worst-case distribution takes only values on the points $\mu-\sigma^2/(t-\mu)$ and $t$ (with probabilities $(t-\mu)^2/(\sigma^2+(t-\mu)^2)$ and $\sigma^2/(\sigma^2+(t-\mu)^2)$, resp.), which can be regarded unrealistic \citep{vanparys2016generalized}. In many practical applications some information on the minimum and maximum of uncertain parameters is known. This is particularly true for OR applications that consider uncertain parameters that are known to be nonnegative, such as inventory management, service operations, appointment scheduling and pricing mechanisms. We remark that a tight tail probability bound under knowledge of the mean, variance and a bounded support was derived by \citet{schepper1995general}. 
Next to restricting the support, a second potential improvement of Chebychev's inequality concerns robustness for outliers. Whereas the (sample) variance is greatly influenced by outliers, the mean absolute deviation (MAD) is less sensitive for large deviations from the mean, and hence a potentially more robust measure of statistical dispersion in data. We therefore propose to replace variance with mean absolute deviation (MAD). Using the MAD comes with additional advantages. We show that the set of extremal distributions for which the derived tail bounds are tight is more varied than a single pathological distribution: it consists of an infinite number of mixed distributions instead. Second, because the MAD is a linear function, it allows for elegant closed-form bounds, a feature we shall leverage when applying the bounds to domain-specific OR questions. 

In obtaining the robust tail bounds, we need to solve \begin{equation}\label{optp2}
\sup\limits_{X\sim \mathbb{P} \in \mathcal{P}_{(\mu , b , d)}}
\bE_{\mathbb{P}} \left[\1\{ X\geq t \}\right],
\end{equation}
with $\mathcal{P}_{(\mu , b , d)}$  the ambiguity set that contains all distributions with a given mean $\mu$, support $[0,b]$ and mean absolute deviation $d$. Optimization problem \eqref{optp2} is a semi-infinite linear optimization problem (LP) that is reminiscent of those arising in moment problems, and typically does not allow for an analytic (closed-form) solution. 
Using the MAD based ambiguity set $\mathcal{P}_{(\mu , b , d)}$, the dual program to \eqref{optp2} can be solved explicitly. 
While comparable dual programs are often solvable as semidefinite or second-order conic programs (see, e.g., \citet{xin2013time,natarajan2007mean,perakis2008regret,semivariance,das2018heavy}), analytic solutions as in our case are typically hard to attain. 

The solution of \eqref{optp2} gives 
a generic tight upper bound on the tail probability of all random variables with a given bounded support, mean and MAD. This new robust bound is of a similar simplicity and generality as the original Chebyshev inequality, and can therefore be used widely in various applications. The worst-case distribution that solves \eqref{optp2}, is however more complicated than the two-point distribution of the Chebyshev inequality, and is a mixed distribution with up to three discrete parts and one continuous part. We also derive three more tail probability bounds: the tight lower bound under $\mathcal{P}_{(\mu , b , d)}$ ambiguity and the tight upper and lower bounds under $\mathcal{P}_{(\mu , b , d,\beta)}$ ambiguity, where we also condition on the skewness $\bP \left(X\geq \mu\right)=\beta$. 

Recent advances in Distributionally Robust Optimization (DRO) also exploit ambiguity sets in terms of bounded support, mean and MAD to obtain closed-form expressions for stochastic quantities such as the minimum and maximum expectation of a convex function~\cite{postek2018robust,ghosal2020distributionally}. 
These closed-form expressions are then used to solve minmax and maxmin optimization problems that arise naturally in decision making under uncertainty.  \cite{postek2018robust} specifically use results from \cite{ben1972more} on tight upper and lower bounds on the expectation of {\it convex} function of a random variable. 
This paper presents the first closed-form solution for the combination of  $\mathcal{P}_{(\mu , b , d)}$ (or $\mathcal{P}_{(\mu , b , d,\beta)}$) constraints and a {\it non-convex} objective function. This proof method is not restricted to the indicator function, and could potentially work for a much larger class of  (measurable) functions. 




The first part of this paper revolves around the tail probability bounds. After the primal-dual proofs we conclude that part with extensive numerical demonstrations and a comparison with other classical bounds. We expect the bounds to be useful in many domains. 
The second part deals specifically with the use of these new bounds in the OR domain. Although numerical aspects are important, the focus is on utilizing the structural properties of the closed-form inequalities. For optimization problems that have tail probabilities as input, this can lead to closed-form or tractable solutions, which would remain out of reach without the derived tight inequalities. 

We first apply the robust bounds for distribution-free analysis of three classical models that can be subjected to minmax or maxmin optimization. We start with the {\it newsvendor model}, the basic single-period inventory model that searches for the optimal order quantity in view of overage and underage costs. Under full information, the optimal order quantity corresponds to a specific quantile of the demand. \cite{Scarf1958} studied the situation when only the mean and variance of demand are known, and derived a robust order quantity as the solution of a {minmax} optimization problem, where the decision maker takes the best decision under the worst possible circumstances in light of mean-variance ambiguity. Scarf's distribution-free analysis is one of the first forms of DRO, and has been a source of inspiration for many OR studies. 
Technically, it requires computing upper bounds via a linear program on the expected value of a convex function $\mathbb{E}[h(X)]$ for a random variable $X$ with mean $\mu$ and variance $\sigma^2$.  We shall apply our tail probability bounds for a comparable analysis, with $\mathcal{P}_{(\mu , b , d,\beta)}$ ambiguity. 

We then turn to the {\it monopolistic pricing problem}, where a seller seeks to maximize profit when selling a single object to a buyer who is willing to pay some unknown value $X$. Traditionally, it is assumed that $X$ is drawn from some distribution that is known to the seller, so that the seller can set the optimal price. When there is a single buyer, the optimal strategy is to post a fixed price $p$ that maximizes the expected profit $p\prob{X> p}$; see \cite{riley1983optimal} and \cite{myerson1981optimal}. 
We apply the derived tail probability bounds to the robust variant of the monopolistic pricing problem, where instead of knowing the distribution, the seller only has partial information contained in $\mathcal{P}_{(\mu , b , d)}$. The seller then becomes a {maxmin} decision maker who chooses the price that maximizes the worst-case expected profit. 

The third classical model occurs in {\it stop-loss reinsurance}. An insurance company faces a claim of size $X$, which it pays up to a predefined level $z$, while the reinsurance company covers the remainder (up to a predefined maximum $m$). We study this problem from both the insurer's and reinsurer's perspective, the latter of which requires an extension of our tail probability bound. Specifically, we derive an upper bound for the expected payment of the reinsurer, which is neither a convex or indicator type function. 
The three classical models come from different corners of Operations Research and Management Science, but have in common that important characteristics can be expressed in distribution functions, which facilitates a direct application of the tail probability bounds for distribution-free analysis. The models also show that the bounds lend themselves to both minmax and maxmin decision problems. 
We emphasize that the models have been chosen somewhat arbitrarily, and  
there are many other OR questions where tail probability bounds under mean-MAD constraints can prove useful. 

As alluded to above, our bounds are part of a 
larger research effort that deals with exploiting the tractability of mean-MAD constraints for enhancing the state-of-affairs in DRO. 
To highlight the connection of our work with DRO, we extend our tail probability bound to sums of random variables. We demonstrate the multivariate tail bound with a risk management example that considers portfolios with multiple risky assets. The bound allows for arbitrary dependence structures and hence is particularly suitable for, e.g., credit risk and insurance problems.
Additionally, we apply our tail probability bounds to find convex reformulations of several types of ambiguous chance constraints containing a single random variable. Specifically, we consider a general convex constraint with right-hand side uncertainty, and a constraint that is bilinear in the decision variable and the uncertain parameter. This allows optimization problems with such ambiguous chance constraints to be solved through conventional solution methods from continuous optimization under mean-MAD ambiguity.
These theoretical results on ambiguous chance constraints are illustrated through an example from radiotherapy optimization, in which the dose of radiation delivered to the tumor is to be maximized, under a probabilistic constraint on the dose of radiation delivered to the surrounding healthy tissue.

{\it Outline and contributions.}~This introduction largely revolves around bounds of tail probabilities through mean-MAD ambiguity 
and their applications in probability theory, stochastic OR and optimization. For each of these applications, we will discuss more details and related studies in the appropriate sections. 
 In Section~\ref{sec: probs} we present, prove and illustrate the tail probability inequalities. We provide tight upper and lower bounds for the probability that a random variable exceeds a specified threshold under a known support, mean and mean absolute deviation. 
In Section~\ref{sec: applications} we use these bounds for distribution-free analysis of classic OR problems. Specifically, we study the newsvendor model in Section \ref{sec: newsvendor}, the monopoly pricing model in Section \ref{sec: pricing}, and the stop-loss reinsurance model in Section \ref{sec: reinsurance}. 
To further demonstrate the large scope of our bounds, and to highlight the connection with DRO, we present in Section \ref{sec: applications2} two more perspectives.
We extend our tail probability bound to sums of random variables in Section \ref{sec: sums} and illustrate this extension in Section~\ref{sec: insuranceexample} with an insurance problem that considers a portfolio with multiple risks. Section \ref{sec: acc} discusses the application of tail probability bounds to reformulate ambiguous chance constraints in distributionally robust optimization. This application is illustrated through an example from radiotherapy optimization in Section \ref{sec: radiotherapy}.

\section{Novel tail probability bounds} \label{sec: probs}

In this section we derive novel bounds for the probability $\mathbb{P}(X\geq t)$ that a random variable $X$ with given support, mean and MAD exceeds $t$. 
We obtain the bounds by solving the semi-infinite linear program
\begin{equation}\label{eq: mean mad ambiguity LP}
    \sup_{\mathbb{P}\in \mathcal{P}_{(\mu, b, d)}}  \int_x \1_{\{x\geq t\}}{\rm d} \mathbb{P}(x),
\end{equation}
where we maximize over a set of probability measures with the stated characteristics, i.e., 
\begin{equation} \label{eq: mean mad ambiguity}
    \cP_{(\mu, b, d)} = \{\bP:\mathcal{B}\to[0,1]\mid \bP(X\in[0,b])=1,\, \bE_{\bP}[X]=\mu,\, \bE_{\bP}[|X-\mu|]=d\}
\end{equation}
with $\mathcal{B}$ the Borel $\sigma$-algebra of the closed set $[0,b]$, and $\mu, b, d\in\bR_+$ are parameters that describe all known properties of the distribution. 
We solve the linear programs and present the novel bounds in Section~\ref{sec:novelbounds}. We then compare the novel bounds with some existing bounds in Section~\ref{sec:compare}, and briefly discuss the existing literature on generalized versions of Chebyshev's inequality. 

\subsection{Tight lower and upper bounds}\label{sec:novelbounds}
 Since $\bP$ is a probability measure it should satisfy the constraint $\int_{x\in[0,b]}{\rm d}\bP(x)=1$. Moreover, this probability measure should satisfy the mean and MAD constraints $\int_{x\in[0,b]}x{\rm d}\bP(x)=\mu$ and $\int_{x\in[0,b]}|x-\mu|{\rm d}\bP(x)=d$. Under these constraints, we solve 
the semi-infinite linear program \eqref{eq: mean mad ambiguity LP}, which gives our first main result. 
\begin{theorem}\label{theorem:main}
Consider a random variable $X$ with a distribution $\mathbb{P}$ in $\mathcal{P}_{(\mu, b, d)}$. Then, 
\begin{equation} \label{eq: worst-case prob}
    \sup\limits_{\mathbb{P}\in\mathcal{P}_{(\mu, b, d)}}\mathbb{P}(X\geq t)=\sup\limits_{\mathbb{P}\in\mathcal{P}_{(\mu, b, d)}}\mathbb{P}(X> t)=
    \begin{cases}
     1, \quad &  t\in[0,\tau_1], \\
     \frac{\mu}{t}-\frac{d(b-t)}{2t(b-\mu)}, \quad &  t\in[\tau_1,\mu], \\
     1-\frac{d}{2\mu}, &  t\in[\mu,\tau_2], \\
     \frac{d}{2(t-\mu)}, &  t\in[\tau_2,b],
    \end{cases}
\end{equation}
with $\tau_1$ and $\tau_2$ given by
\begin{equation*}
    \tau_1=\mu-\frac{d(b-\mu)}{2(b-\mu)-d},\quad \tau_2=\mu+\frac{d\mu}{2\mu-d}.
\end{equation*}
\end{theorem}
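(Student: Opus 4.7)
The plan is to solve the semi-infinite LP \eqref{eq: mean mad ambiguity LP} by primal-dual verification. Dualizing the three linear constraints (normalization, mean, MAD) with real multipliers $\lambda_0,\lambda_1,\lambda_2$ gives the dual
\[
\inf_{\lambda\in\mathbb{R}^3}\ \lambda_0 + \lambda_1\mu + \lambda_2 d\quad \text{s.t.}\quad g_\lambda(x) := \lambda_0 + \lambda_1 x + \lambda_2 |x-\mu| \geq \mathbf{1}_{\{x\geq t\}}\ \ \forall x\in[0,b],
\]
whose feasible majorants are piecewise linear with a kink at $\mu$. In each of the four regimes of $t$, I will exhibit (i) a primal-feasible $\mathbb{P}^\star \in \mathcal{P}_{(\mu,b,d)}$ attaining the claimed value and (ii) a dual-feasible $g_{\lambda^\star}$ of matching cost; weak duality then pins both to the common value. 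The thresholds $\tau_1,\tau_2$ emerge as the values of $t$ at which a primal support weight vanishes.

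For the primal, the identity $\mathbb{E}_{\mathbb{P}}[(X-\mu)_+]=d/2$ fixes the one-sided tails. For $t\in[\tau_1,\mu]$ I try a three-point distribution on $\{0,t,b\}$; $(b-\mu)p_b=d/2$ together with the mean constraint yields $p_b=d/(2(b-\mu))$ and $p_t = \mu/t - bd/(2t(b-\mu))$, so that $\mathbb{P}^\star(X\geq t)=p_t+p_b$ collapses to the stated expression. The weight $p_0$ vanishes precisely at $t=\tau_1$; for $t\leq\tau_1$ the same distribution with $t$ replaced by $\tau_1$ remains feasible and has $\mathbb{P}(X\geq t)=1$. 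Symmetrically, for $t\in[\tau_2,b]$ a three-point distribution on $\{0,\mu,t\}$ with $\mu p_0=(t-\mu)p_t=d/2$ yields $\mathbb{P}^\star(X\geq t)=d/(2(t-\mu))$, with $p_\mu$ vanishing at $t=\tau_2$. In the middle regime $t\in[\mu,\tau_2]$ a two-point distribution on $\{0,\tau_2\}$ with masses $d/(2\mu)$ and $1-d/(2\mu)$ satisfies all three moment constraints thanks to the identity $\tau_2(1-d/(2\mu))=\mu$, delivering the constant value $1-d/(2\mu)$.

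On the dual side, complementary slackness at the candidate support points pins down the multipliers. For $t\in[\tau_1,\mu]$, the tight conditions $g_\lambda(0)=0$ and $g_\lambda(t)=g_\lambda(b)=1$ produce the unique majorant equal to $x/t$ on $[0,\mu]$ and linearly interpolating from $(\mu,\mu/t)$ down to $(b,1)$; its objective evaluates to $\mu/t-d(b-t)/(2t(b-\mu))$. The right regime is handled by the symmetric choice $g_\lambda(x)=(x-\mu)_+/(t-\mu)$. The middle regime is cleanest: the saturating majorant $g_\lambda(x)=\min(x/\mu,\,1)$ dominates $\mathbf{1}_{\{x\geq t\}}$ for every $t\geq\mu$ (its right branch is the constant $1$) and has cost exactly $1-d/(2\mu)$, which accounts for the flatness of the bound in $t$. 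The equality $\sup\mathbb{P}(X\geq t)=\sup\mathbb{P}(X>t)$ follows because every $g_\lambda$ is continuous, so the closed and open versions of the dual constraint are equivalent and share a common dual optimum; approximating the extremal distributions by shifting the atom at $t$ slightly away to $t+\epsilon$ (and rebalancing via the three linear constraints, which is possible for small $\epsilon$) recovers the bound from the primal side as well. The main technical obstacle is the pointwise verification that each $g_\lambda$ actually dominates $\mathbf{1}_{\{x\geq t\}}$ on \emph{all} of $[0,b]$, in particular checking that the kink at $\mu$ produces no violation on either piece and that the breakpoints $\tau_1,\tau_2$ align consistently between the primal and dual constructions.
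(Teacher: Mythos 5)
Your proposal is correct and follows essentially the same route as the paper: dualize the three moment constraints, identify the optimal piecewise-linear majorant with a kink at $\mu$ in each of the four regimes of $t$, and exhibit a matching primal distribution supported where the majorant touches the indicator, so that weak duality closes the gap. The only differences are cosmetic choices of primal witness in the degenerate regimes (e.g.\ your two-point distribution on $\{0,\tau_2\}$ for $t\in[\mu,\tau_2]$ versus the paper's three-point support $\{0,t,b\}$; both lie in the set of extremal distributions described in Proposition~\ref{prop: worst-cases}).
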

\begin{proof}
Let $\mathcal{M}^+$ be the set of non-negative measures defined on the measurable space $([0,b],\mathcal{B})$. We need to solve
\begin{equation}\label{eq:primal1}
\begin{aligned}
&\! \sup_{\mathbb{P}\in \mathcal{M}^+} &  &\int_x \1_{\{x\geq t\}}{\rm d} \mathbb{P}(x)\\
&\text{s.t.} &      &  \int_x {\rm d}\mathbb{P}(x)=1, \ \int_x x{\rm d}\mathbb{P}(x)=\mu,\ \int_x |x-\mu|{\rm d}\mathbb{P}(x)=d.
\end{aligned}
\end{equation}
A useful fact is that the semi-infinite LP \eqref{eq:primal1} can be reduced to an equivalent finite LP that yields the same optimal value. In particular, when certain Slater conditions hold for the moment constraints (i.e., the moment vector should lie in the interior of the set of feasible moments) then solving the primal semi-infinite LP is equivalent to solving its finite dual counterpart; see, e.g., \citet{Isii1962} or \citet{popescu2005semidefinite}.
Moreover, the Richter-Rogosinski Theorem (see, e.g.,~\citet{rogosinski1958moments,shapiro2009lectures}, or \citet{han2015convex}) states that there exists an extremal distribution for problem  \eqref{eq:primal1} with at most three support points. While finding these points in closed form is typically not possible for general semi-infinite problems, we next show that this is possible for the problem at hand by resorting to the dual problem and exploiting the specific shape of the dual constraints that is imposed by the MAD constraint $\int_x |x-\mu|{\rm d}\mathbb{P}(x)=d$.

Consider the dual of \eqref{eq:primal1}, 
\begin{equation}\label{eq:dual1}
\begin{aligned}
&\inf_{\lambda_0,\lambda_1,\lambda_2} &  &\lambda_0 + \lambda_1 \mu+\lambda_2 d\\
&\text{s.t.} &      & \1_{\{x\geq t\}}\leq \lambda_0  +\lambda_1 x+\lambda_2 |x-\mu| \eqqcolon F(x), \ \forall x\in[0,b].
\end{aligned}
\end{equation}
The constraint of the dual problem requires $F(x)$ to majorize $\1\{x\geq t\}$. Note that $F(x)$ has a 'kink' at $x=\mu$, that is, $F(x)$ is piecewise linear and can only change direction in $x=\mu$. Solving \eqref{eq:dual1} boils down to finding the tightest majorant. We have four candidates for the solution, which are depicted in Figure~\ref{fig:majors1}. When $t\in[0,\mu]$, $F(x)$ touches $\1\{x\geq t\}$ in $\{0,t,b\}$ (scenario 1a), or $F(x)=1$ and touches $\1\{x\geq t\}$ in $[t, b]$ (scenario 1b). When $t\in[\mu,b]$, $F(x)$ touches in $[0, \mu] \cup \{t\}$ (scenario 2a) or in $\{0\} \cup [t, b]$ (scenario 2b).

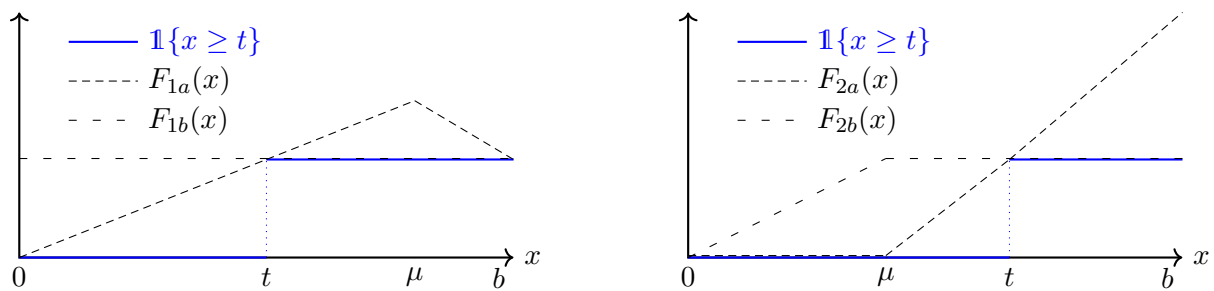
\begin{figure}[h!]
\begin{center}
\begin{minipage}[b]{0.45\linewidth}
\begin{tikzpicture}[scale=1.3]
\draw [<->,thick] (0,2.5) node (yaxis) [above] {}
        |- (5,0) node (xaxis) [right] {$x$};
        \draw[blue,thick] (0.5,2.2) -- (1.2,2.2) node[right] {$\1\{x\geq t\}$};
        \draw[densely dashed] (0.5,1.8) -- (1.2,1.8) node[right] {$F_{1a}(x)$};
        \draw[loosely dashed] (0.5,1.4) -- (1.2,1.4) node[right] {$F_{1b}(x)$};
        \coordinate (mu+) at (4,1);
        \coordinate (mu-) at (4,0);
        \coordinate (mu--) at (4,0.01);
        \coordinate (z) at (0, 0);
        \coordinate (z--) at (0.01,0.01);
        \coordinate (b) at (5,1);
        \coordinate (b+) at (5,2.5);
        \coordinate (t+) at (2.5,1);
        \coordinate (t-) at (2.5,0);
        \draw[blue, thick] (z) -- (t-);
        \draw[blue, dotted] (t-) -- (t+);
        \draw[blue, thick] (t+) -- (b);
        \draw[densely dashed] (z) -- (t+);
        \draw[densely dashed] (t+) -- (4,1.6);
        \draw[densely dashed] (4,1.6) -- (5,1);
        \draw[loosely dashed] (0,1.01) -- (5,1.01);
        \draw[dotted] (mu-) -- (mu-) node[below] {$\mu$};
        \draw[dotted] (t-) -- (t-) node[below] {$t$};
        \draw[dotted] (0,0) -- (0,0) node[below] {$0$};
        \draw[dotted] (4.85,0) --(4.85,0) node[below] {$b$};

\end{tikzpicture}
\end{minipage}
\hfill
\begin{minipage}[b]{0.45\linewidth}
\begin{tikzpicture}[scale=1.3]
\draw [<->,thick] (0,2.5) node (yaxis) [above] {}
        |- (5,0) node (xaxis) [right] {$x$};
        \draw[blue,thick] (0.5,2.2) -- (1.2,2.2) node[right] {$\1\{x\geq t\}$};
        \draw[densely dashed] (0.5,1.8) -- (1.2,1.8) node[right] {$F_{2a}(x)$};
        \draw[loosely dashed] (0.5,1.4) -- (1.2,1.4) node[right] {$F_{2b}(x)$};
        \coordinate (mu+) at (2,1);
        \coordinate (mu-) at (2,0);
        \coordinate (mu--) at (2,0.02);
        \coordinate (z) at (0, 0);
        \coordinate (z--) at (0,0.02);
        \coordinate (b) at (5,1);
        \coordinate (b+) at (5,2.5);
        \coordinate (t+) at (3.25,1);
        \coordinate (t-) at (3.25,0);
        \draw[blue, thick] (z) -- (t-);
        \draw[blue, dotted] (t-) -- (t+);
        \draw[blue, thick] (t+) -- (b);
        \draw[densely dashed] (z--) -- (mu--);
        \draw[densely dashed] (mu--) -- (t+);
        \draw[densely dashed] (t+) -- (b+);
        \draw[loosely dashed] (0,0) -- (2,1.01);
        \draw[loosely dashed] (2,1.01) -- (5,1.01);
        \draw[dotted] (mu-) -- (mu-) node[below] {$\mu$};
        \draw[dotted] (t-) -- (t-) node[below] {$t$};
        \draw[dotted] (0,0) -- (0,0) node[below] {$0$};
        \draw[dotted] (4.85,0) --(4.85,0) node[below] {$b$};
        
\end{tikzpicture}
\end{minipage}
\caption{Scenario 1 and the majorizing functions $F_{1a}(x)$ and $F_{1b}(x)$ under scenarios 1a and 1b, respectively. Scenario 2 and the majorizing functions $F_{2a}(x)$ and $F_{2b}(x)$ under scenarios 2a and 2b, respectively.}\label{fig:majors1}
\end{center}
\end{figure}

Scenario 1a implies $F(0)=0, \, F(t)=F(b)=1,$ which gives dual solution
\begin{equation}
    \lambda_2=-\frac{b-t}{2t(b-\mu)}, \quad \lambda_1=\frac{1}{t}+\lambda_2, \quad \lambda_0=-\lambda_2\mu,
\end{equation}
and objective value
\begin{equation}
    \lambda_0+\lambda_1\mu + \lambda_2 d = \frac{\mu}{t}-\frac{d(b-t)}{2t(b-\mu)}.
\end{equation}
The next step of our proof is to find a feasible solution for the primal problem which yields the same objective value as the solution to the dual problem. By weak duality of semi-infinite linear programming, we know that a feasible solution to the dual problem provides us with a valid upper bound for the optimal primal solution value. Now finding a feasible primal solution with an objective value equal to this upper bound results in strong duality. Next, we will provide a constructive approach for finding such a primal solution. Assume that we have strong duality. The primal maximizer $\bP^*$ and the dual minimizer $(\lambda_0^*,\lambda_1^*,\lambda_2^*)$ are then related as
\begin{equation}\label{eq: compslack}
    \int _x\1\{x\geq t\}{\rm d}\mathbb{P}^*(x) = \int _x(\lambda^*_0+\lambda^*_1x + \lambda^*_2 |x-\mu|){\rm d}\mathbb{P}^*(x).
\end{equation}
Moreover, due to dual feasibility we must have that $\lambda^*_0+\lambda^*_1\mu + \lambda^*_2 d - \1\{x\geq t\}\geq0$ pointwise for each $x\in[0,b]$. This inequality combined with equation \eqref{eq: compslack} is also known as the complementary slackness relation in (semi-infinite) linear programming. An immediate consequence of complementary slackness is that the worst-case probability distribution should be supported on the points where the dual solution function $F^*(x)=\lambda^*_0+\lambda^*_1x + \lambda^*_2 |x-\mu|$ coincides with the indicator function $\1\{x\geq t\}$. For scenario 1a we have one (unique) option, that is, a discrete probability distribution with probability masses on the elements of the set $\{0,t,b\}$. The corresponding optimal probabilities of \eqref{eq:primal1} follow from solving 
\begin{equation}
    p_0+p_t+p_b=1, \quad p_t t+p_b b=\mu, \quad p_0 \mu + p_t(\mu-t)+p_b(b-\mu)=d.
\end{equation}
This gives
\begin{equation}
    p_b=\frac{d}{2(b-\mu)}, \quad p_t=\frac{\mu}{t}-\frac{bd}{2t(b-\mu)},
\end{equation}
and hence
\begin{equation}
    \int_x\1\{x\geq t\}{\rm d}\mathbb{P}(x)=p_t+p_b=\frac{\mu}{t}-\frac{d(b-t)}{2t(b-\mu)}.
\end{equation}
Since, by weak duality of semi-infinite linear programming, we have strong duality as both the primal and dual objective value are the same, these are the optimal solutions.

Scenario 1b implies $F(0)=F(t)=F(b)=1$ and hence $\lambda_0=1,\,\lambda_1=\lambda_2=0$ with objective value 1. One feasible primal solution is $p_b=\frac{\mu-t}{b-t},\,p_t=1-p_b$, with objective 1. Note that this primal solution is not a unique optimum, as the dual solution function $F_{1b}^*(x)$ coincides with $\1\{x\geq t\}$ on the entire interval $[t,b]$. Therefore, one could construct an arbitrary (discrete, continuous or mixed) probability distribution with support on the interval $[t,b]$, which then serves as the worst-case distribution, as long as the mean and MAD conditions are satisfied.

Scenario 2a implies $F(0)=F(\mu)=0, \, F(t)=1$, which gives
\begin{equation}
    \lambda_1=\lambda_2=\frac{1}{2(t-\mu)}, \quad \lambda_0=-\frac{\mu}{2(t-\mu)},
\end{equation}
and objective value
\begin{equation}
    \lambda_0+\lambda_1\mu + \lambda_2 d=\frac{d}{2(t-\mu)}.
\end{equation}
Solving the optimal probabilities of \eqref{eq:primal1}, where we take $\{0,\mu,t\}$ for the support of the worst-case distribution, indeed confirms that $p_t=\frac{d}{2(t-\mu)}$.

Scenario 2b gives $F(0)=0, \, F(\mu)=F(b)=1$, which results in
\begin{equation}
    \lambda_0=\frac12, \quad \lambda_1=\frac{1}{2\mu}, \quad \lambda_2=-\frac{1}{2\mu}
\end{equation}
and dual objective value
\begin{equation}
    \lambda_0+\lambda_1\mu+\lambda_2d=1-\frac{d}{2\mu}.
\end{equation}
Solving \eqref{eq:primal1} with support $\{0,t,b\}$ confirms that $p_0=\frac{d}{2\mu}$.

The proof is then completed by looking which scenario prevails on a specific interval, and these intervals can be determined by simply equating the minimum objective values and thereafter solving the resulting equations with respect to $t$ to find $\tau_1$ and $\tau_2$ for, respectively, scenario 1 and 2. 

We remark that the proof is identical for the strict inequality. Because the majorant is a continuous function, it is irrelevant whether the indicator function that is majorized is lower or upper semi-continuous.
\end{proof}

We mention some noteworthy characteristics of the bound in Theorem~\ref{theorem:main}. The bound is continuous in $t=\mu$.
If the support is symmetric around $\mu$, then the worst-case probability is at least $1/2$ for $t\in[0,\mu]$. 
The upper bound for $t\in[\mu,b]$ is increasing for $d\leq{2\mu(t-\mu)}/{t}$ and decreasing for larger values of $d$.
This last observation in particular is interesting as one might anticipate the bound to increase with MAD. This also implies that when MAD is unknown, the worst-case probability based on only the support and mean is given by the result of Theorem~\ref{theorem:main} for $d={2\mu(t-\mu)}/{t}$. This indeed returns Markov's inequality. We also mention that the support information $[0,b]$ can easily be extended to $[a,b]$ with $a\in\mathbb{R}$ by shifting the distribution accordingly. The tail bounds for the second and third interval then change into
\begin{equation}
    \frac{\mu-a}{t-a} - \frac{d(b-t)}{2(t-a)(b-\mu)} \quad {\rm and} \quad  1-\frac{d}{2(\mu-a)},
\end{equation}
respectively.

For a tight lower bound on $\bP(X>t)$, we can use the results and the remark above on a slightly altered version of the input. The idea is formalized in the following theorem:

\begin{theorem}\label{theorem:inf}
Consider a random variable $X$ with a distribution $\mathbb{P}$ in $\mathcal{P}_{(\mu, b, d)}$. Then,
\begin{equation}\label{eq:thm2}
    \inf\limits_{\mathbb{P}\in\mathcal{P}_{(\mu, b, d)}}\mathbb{P}(X\geq t)=\inf\limits_{\mathbb{P}\in\mathcal{P}_{(\mu, b, d)}}\mathbb{P}(X> t)=
    \begin{cases}
     1-\frac{d}{2(\mu-t)}, \quad &  t\in[0,\tau_1], \\
     \frac{d}{2(b-\mu)}, \quad &  t\in[\tau_1,\mu], \\
     \frac{\mu-t}{b-t}+\frac{dt}{2\mu(b-t)}, &  t\in[\mu,\tau_2], \\
     0, &  t\in[\tau_2,b]
    \end{cases}
\end{equation}
with $\tau_1$ and $\tau_2$ given by
\begin{equation*}
    \tau_1=\mu-\frac{d(b-\mu)}{2(b-\mu)-d},\quad \tau_2=\mu+\frac{d\mu}{2\mu-d}.
\end{equation*}
\end{theorem}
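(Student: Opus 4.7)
The plan is to reduce Theorem~\ref{theorem:inf} to Theorem~\ref{theorem:main} via the reflection $Y = b - X$. If $X$ has distribution $\bP \in \cP_{(\mu,b,d)}$, then $Y$ takes values in $[0,b]$ with $\bE[Y] = b - \mu$ and $\bE[|Y - (b-\mu)|] = \bE[|\mu - X|] = d$, so $Y \sim \bQ \in \cP_{(b-\mu, b, d)}$. The map $\bP \mapsto \bQ$ is a bijection between $\cP_{(\mu,b,d)}$ and $\cP_{(b-\mu,b,d)}$.

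Using the reflection, I would write
\begin{equation*}
\inf_{\bP \in \cP_{(\mu,b,d)}} \bP(X \geq t) = 1 - \sup_{\bP \in \cP_{(\mu,b,d)}} \bP(X < t) = 1 - \sup_{\bQ \in \cP_{(b-\mu,b,d)}} \bQ(Y > b-t),
\end{equation*}
and similarly for the strict inequality. By the concluding remark of the proof of Theorem~\ref{theorem:main}, $\sup \bQ(Y > s) = \sup \bQ(Y \geq s)$, so both one-sided lower bounds coincide, giving the claimed equality of $\inf \bP(X \geq t)$ and $\inf \bP(X > t)$.

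Next, I would substitute $s = b-t$ and $\mu' = b-\mu$ into the four-piece formula \eqref{eq: worst-case prob} of Theorem~\ref{theorem:main}, with breakpoints
\begin{equation*}
\tau_1' = \mu' - \tfrac{d(b-\mu')}{2(b-\mu')-d} = (b-\mu) - \tfrac{d\mu}{2\mu-d}, \qquad \tau_2' = \mu' + \tfrac{d\mu'}{2\mu'-d} = (b-\mu) + \tfrac{d(b-\mu)}{2(b-\mu)-d}.
\end{equation*}
The condition $b-t \in [0,\tau_1']$ is equivalent to $t \in [\tau_2, b]$; $b-t \in [\tau_1', \mu']$ to $t \in [\mu, \tau_2]$; $b-t \in [\mu', \tau_2']$ to $t \in [\tau_1, \mu]$; and $b-t \in [\tau_2', b]$ to $t \in [0, \tau_1]$. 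Subtracting each of the four expressions from $1$ and simplifying should yield, respectively, $0$, $\tfrac{\mu-t}{b-t} + \tfrac{dt}{2\mu(b-t)}$, $\tfrac{d}{2(b-\mu)}$, and $1 - \tfrac{d}{2(\mu-t)}$, matching \eqref{eq:thm2}.

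The main obstacle is mostly bookkeeping rather than conceptual: checking that the mapping of the four intervals under $s \mapsto b-s$ preserves the correct correspondence between breakpoints (so that the reflected $\tau_1', \tau_2'$ for $Y$ become exactly $\tau_2, \tau_1$ for $X$), and verifying a few algebraic simplifications such as $1 - \tfrac{b-\mu}{b-t} + \tfrac{d\,t}{2(b-t)\mu} = \tfrac{\mu-t}{b-t} + \tfrac{dt}{2\mu(b-t)}$. Once these are confirmed, the reflection argument delivers Theorem~\ref{theorem:inf} directly from Theorem~\ref{theorem:main} without reopening the primal/dual analysis.
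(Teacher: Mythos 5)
Your proposal is correct and follows essentially the same route as the paper: both convert the infimum into $1$ minus a supremum over a reflected ambiguity set and then invoke Theorem~\ref{theorem:main}, the only (immaterial) difference being that you reflect via $Y=b-X$, which keeps the support $[0,b]$ and changes the mean to $b-\mu$, whereas the paper reflects about $\mu$ and uses the shifted-support remark. I verified that your interval correspondences and the resulting four expressions all match \eqref{eq:thm2}, so the remaining work is indeed only the bookkeeping you describe.
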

\begin{proof}
We reformulate the infimum as follows:
\begin{align*}
    \inf\limits_{\mathbb{P}\in\mathcal{P}_{(\mu, b, d)}}\mathbb{P}(X> t)& = 1 - \sup\limits_{\mathbb{P}\in\mathcal{P}_{(\mu, b, d)}}\mathbb{P}(X\leq t) \\
    &= 1 - \sup\limits_{\mathbb{P}\in\tilde{\mathcal{P}}_{(\mu, \tilde{b}, \tilde{d})}}\mathbb{P}(X\geq \tilde{t}),
\end{align*}
where
\begin{equation*}
    \tilde{\mathcal{P}}_{(\mu, \tilde{b}, \tilde{d})}=\{\mathbb{P}:\mathcal{B}\to[0,1] \mid  \mathbb{P}(X\in[\tilde{a},\tilde{b}])=1,\:\mathbb{E}_{\mathbb{P}}[X]=\mu,\: \mathbb{E}_{\mathbb{P}}[|X-\mu|]=d\},
\end{equation*}
and $\tilde{a}=2\mu-b$, $\tilde{b}=2\mu$ and $\tilde{t}=2\mu-t$. Plugging in the results from Theorem~\ref{theorem:main} for $t\in(a,b]$ then yields \eqref{eq:thm2}. Similarly, the result for $\inf_{\mathbb{P}\in\mathcal{P}}\mathbb{P}(X\geq t)$ can be obtained.
\end{proof}
We now describe in more detail the worst-case distributions found that are revealed in the proof of Theorem~\ref{theorem:main}. 

\begin{proposition}\label{prop: worst-cases}
Consider the set of worst-case distributions
$\cP^* = \arg\sup_{\bP \in \cP_{(\mu,b,d)}} \bE_{\bP}[\1\{X \geq t\}]$. Then, 
\begin{itemize}
    \item[{\rm (i)}]  If \(t \in [0, \tau_1]\), 
    $\cP^* = \{\bP \in \cP_{(\mu, b, d)} \mid \bP\left(X \in [t, b]\right) = 1 \}$, 
    all  distributions in \(\cP_{(\mu,b,d)}\) that are supported on the interval \([t, b]\). 
    
    \item[{\rm (ii)}] If \(t \in [\tau_1, \mu]\), 
    $\cP^* = \{\bP :  \bP\left(X = 0\right) = 1-\frac{\mu}{t}+\frac{d(b-t)}{2t(b-\mu)}, \, \bP\left(X = t\right) = \frac{\mu}{t} - \frac{b d}{2t(b-\mu)}, \, \bP\left(X = b\right) = \frac{d}{2(b-\mu)} \}$,
    the three-point distribution as derived in scenario 1a in the proof of Theorem~\ref{theorem:main}.
    
    \item[{\rm (iii)}] If \(t \in [\mu, \tau_2]\), 
    $\cP^* = \{\bP \in \cP_{(\mu, b, d)} \bigm| \bP\left(X = 0\right) = \frac{d}{2\mu}, \, \bP\left(X \in [t, b]\right) = 1 - \frac{d}{2\mu} \}$,
    all discrete/mixed distributions with probability mass \(\frac{d}{2\mu}\) on 0 and the remainder of its probability mass supported on \([t, b]\).
    
    \item[{\rm (iv)}] If \(t \in [\tau_2, b]\),
    $\cP^* = \{\bP \in \cP_{(\mu, b, d)} \bigm| \bP\left(X = t\right) = \frac{d}{2(t - \mu)}, \, \bP(X \in [0, \mu]) = 1 - \frac{d}{2(t - \mu)} \}$,
    all discrete/mixed distributions with probability mass \(\frac{d}{2(t - \mu)}\) on \(t\) and the remainder of its probability mass supported on \([0, \mu]\).
\end{itemize}
\end{proposition}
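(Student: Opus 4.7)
My approach is to build on the complementary slackness already exhibited in the proof of Theorem~\ref{theorem:main}. For each of the four intervals of $t$, that proof supplies an explicit dual optimal triple $(\lambda_0^*, \lambda_1^*, \lambda_2^*)$, hence a specific piecewise linear majorant $F^*(x) = \lambda_0^* + \lambda_1^* x + \lambda_2^* |x - \mu|$. By dual feasibility $F^*\geq \1_{\{x \geq t\}}$ pointwise, and by strong duality $\int (F^* - \1_{\{x \geq t\}})\, {\rm d}\bP^* = 0$ for any primal optimum $\bP^*$. Hence $\bP^*$ must be supported inside the contact set $S^* = \{x \in [0,b] : F^*(x) = \1_{\{x \geq t\}}\}$. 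The proposition then reduces, interval by interval, to reading $S^*$ off Figure~\ref{fig:majors1} and characterising those $\bP \in \cP_{(\mu, b, d)}$ with $\mathrm{supp}(\bP) \subseteq S^*$.

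\textbf{Cases (ii) and (iv).} These are essentially packaged up already in the proof of Theorem~\ref{theorem:main}. Scenario~1a gives the three-point contact set $S^* = \{0, t, b\}$, and the three moment equations in three unknowns determine the unique triple of probabilities stated in (ii). Scenario~2a gives $S^* = [0, \mu] \cup \{t\}$; matching the objective value pins down $\bP(X = t) = d/(2(t - \mu))$, while a short computation shows that the residual mass on $[0, \mu]$ is constrained only by the mean equation (the MAD constraint being then automatically satisfied).

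\textbf{Cases (i) and (iii).} In case (i) the optimal majorant is $F^*_{1b} \equiv 1$, so $S^* = [t, b]$ and every $\bP \in \cP_{(\mu, b, d)}$ supported in $[t, b]$ is worst-case. In case (iii) the majorant vanishes at $0$, rises linearly to $1$ on $[0, \mu]$ and stays at $1$ on $[\mu, b]$, so $S^* = \{0\} \cup [t, b]$; using the MAD equation together with $\mathrm{supp}(\bP) \subseteq \{0\} \cup [t, b]$ forces $\bP(X = 0) = d/(2\mu)$, after which the residual mass $1 - d/(2\mu)$ on $[t, b]$ is only required to reproduce the mean.

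\textbf{Main obstacle.} The harder half is sufficiency rather than necessity: I have to show that each of the described families is nonempty and that every stated member actually lies in $\cP_{(\mu, b, d)}$. This boils down to checking that the reduced moment problems on $[t, b]$ (in cases (i) and (iii)) or on $[0, \mu]$ (in case (iv)) are feasible, and that any feasible solution of the reduced problem lifts to an element of $\cP_{(\mu, b, d)}$. Both steps are routine but must be executed cleanly; feasibility is delivered precisely by the ranges $t \in [\tau_1, \mu]$, $[\mu, \tau_2]$ and $[\tau_2, b]$ on which each scenario wins the duality competition in Theorem~\ref{theorem:main}.
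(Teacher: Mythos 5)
Your proposal is correct and follows essentially the same route as the paper's own proof: invoke the complementary slackness relation from Theorem~\ref{theorem:main}, read off the contact set of the optimal dual majorant for each of the four regimes, and then characterise the distributions in $\cP_{(\mu,b,d)}$ supported on that set (with the mass at the isolated atoms pinned down by the MAD equation). Your explicit attention to the sufficiency direction (nonemptiness of each family and that every member attains the optimum) is a detail the paper's two-line proof glosses over, but it does not change the argument.
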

\begin{proof}
The proof follows almost directly from the complementary slackness relation explained in the proof of Theorem~\ref{theorem:main}. For $t\in[0,\tau_1]$ the dual solution function coincides with $\1\{x\geq t\}$ on the interval $[t,b]$. Hence, all distributions that are supported on this interval and obey the mean and MAD requirements are possible candidates for the worst-case distribution. Next, one can apply a similar reasoning for $t\in[\mu,\tau_2]$ and $t\in[\tau_2,b]$. The worst-case distribution can exist on the range where the dual solution function $F^*(x)$ and the indicator function coincide. To attain the same optimal value, the probability mass on the singletons is chosen accordingly. Finally, note that the second case is already shown in the proof of Theorem~\ref{theorem:main}.
\end{proof}

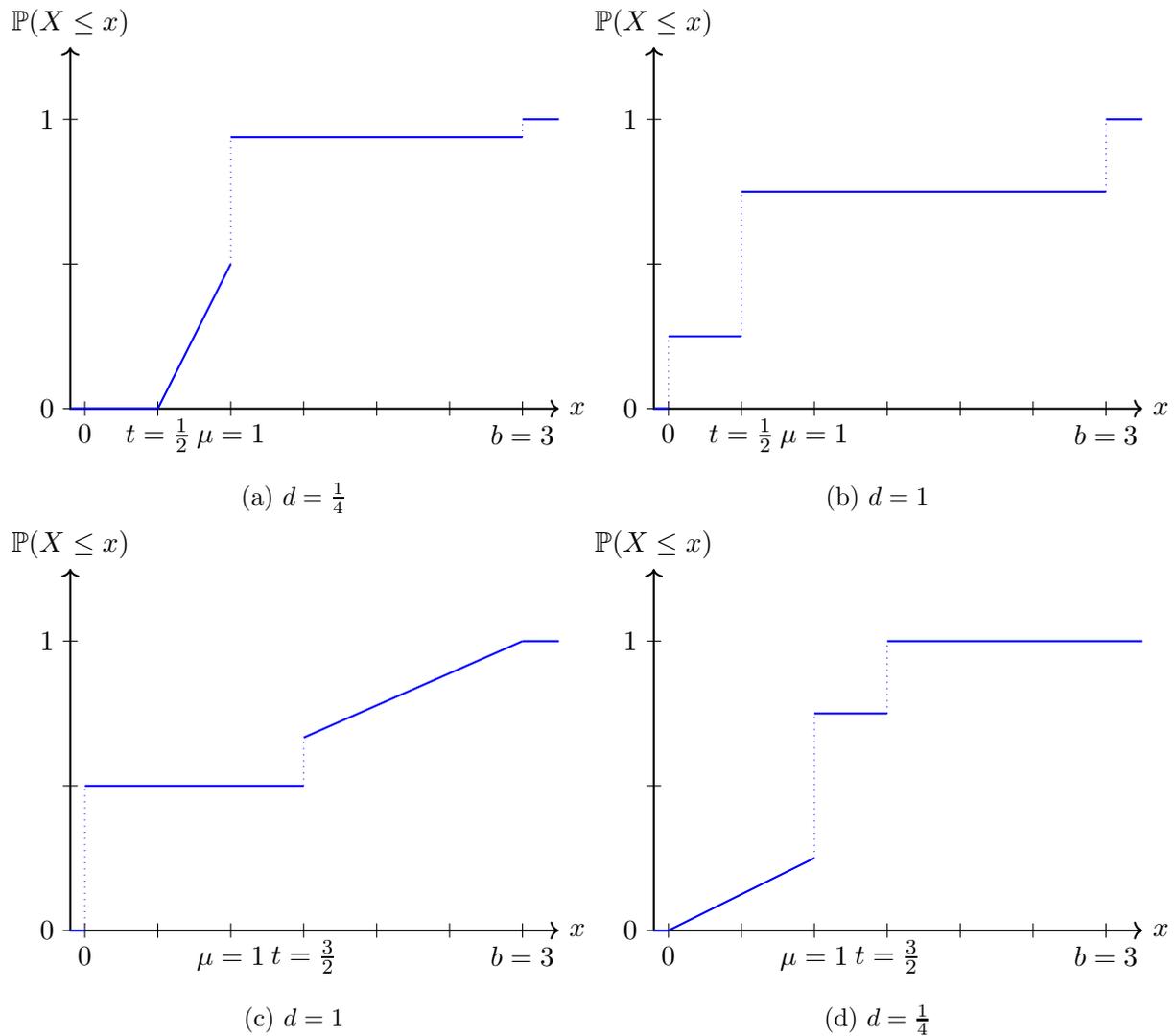
\begin{figure}[t]
	\begin{center}
	\begin{subfigure}{.5\textwidth}
	\begin{center}
    \begin{tikzpicture}[scale=2]
        \draw[<->,thick] (-0.1,2.5) node[above] (yaxis) {\(\bP(X \leq x)\)} |- (3.25,0) node[right] (xaxis) {$x$};
        
        \foreach \i in {0, ..., 6}
        {
            \draw (1/2*\i, -0.05) -- (1/2*\i, 0.05);
        }
        
        \foreach \i in {0, ..., 2}
        {
            \draw (-0.15, \i) -- (-0.05, \i);
        }
        
        \coordinate (origin) at (-0.1, 0);
        \node[left, xshift=-2.5] at (origin) {$0$};
        \node[left, xshift=-2.5] at (-0.1, 2) {$1$};
        \coordinate (destination) at (3.25, 2*1);
        
        \coordinate (00) at (0, 0);
        \node[below, yshift=-2.5] at (00) {$0$};

		\coordinate (mu0) at (1, 0);
		\node[below, yshift=-2.5] at (mu0) {$\mu = 1$};
        \coordinate (mu+) at (1, 2*0.5);
        \coordinate (mu++) at (1, 2*0.9375);
    
    	\coordinate (t0) at (0.5, 0);
    	\node[below] at (t0) {$t = \frac{1}{2}$};
        
        \coordinate (b0) at (3, 0);
        \node[below, yshift=-2.5] at (b0) {$b = 3$};
        \coordinate (b+) at (3, 2*0.9375);
        \coordinate (b++) at (3, 2*1);
        
        \draw[thick, blue] (origin) -- (00);
        \draw[thick, blue] (00) -- (t0);
        \draw[thick, blue] (t0) -- (mu+);
        \draw[dotted, blue] (mu+) -- (mu++);
        \draw[thick, blue] (mu++) -- (b+);
        \draw[dotted, blue] (b+) -- (b++);
        \draw[thick, blue] (b++) -- (destination);
        
    \end{tikzpicture}
    \caption{$ d = \frac{1}{4} $}
    \end{center}
\end{subfigure}%
\begin{subfigure}{.5\textwidth}
    \begin{center}
    \begin{tikzpicture}[scale=2]
        \draw[<->,thick] (-0.1,2*1.25) node[above] (yaxis) {\(\bP(X \leq x)\)} |- (3.25,0) node[right] (xaxis) {$x$};
        
        \foreach \i in {0, ..., 6}
        {
            \draw (1/2*\i, -0.05) -- (1/2*\i, 0.05);
        }
        
        \foreach \i in {0, ..., 2}
        {
            \draw (-0.15, \i) -- (-0.05, \i);
        }
        
        \coordinate (origin) at (-0.1, 0);
        \node[left, xshift=-2.5] at (origin) {$0$};
        \node[left, xshift=-2.5] at (-0.1, 2*1) {$1$};
        \coordinate (destination) at (3.25, 2*1);
        
        \coordinate (00) at (0, 0);
        \node[below, yshift=-2.5] at (00) {$0$};

        \coordinate (0+) at (0, 2*0.25);

		\coordinate (mu0) at (1, 0);
		\node[below, yshift=-2.5] at (mu0) {$\mu = 1$};
    
    	\coordinate (t0) at (0.5, 0);
    	\node[below] at (t0) {$t = \frac{1}{2}$};
    	\coordinate (t+) at (0.5, 2*0.25);
    	\coordinate (t++) at (0.5, 2*0.75);
        
        \coordinate (b0) at (3, 0);
        \node[below, yshift=-2.5] at (b0) {$b = 3$};
        \coordinate (b+) at (3, 2*0.75);
        \coordinate (b++) at (3, 2*1);
        
        \draw[thick, blue] (origin) -- (00);
        \draw[dotted, blue] (00) -- (0+);
        \draw[thick, blue] (0+) -- (t+);
        \draw[dotted, blue] (t+) -- (t++);
        \draw[thick, blue] (t++) -- (b+);
        \draw[dotted, blue] (b+) -- (b++);
        \draw[thick, blue] (b++) -- (destination);
        
    \end{tikzpicture}
    \caption{$ d = 1 $}
\end{center}
\end{subfigure}

\begin{subfigure}{.5\textwidth}
	\begin{center}
    \begin{tikzpicture}[scale=2]
        \draw[<->,thick] (-0.1,2*1.25) node[above] (yaxis) {\(\bP(X \leq x)\)} |- (3.25,0) node[right] (xaxis) {$x$};
        
        \foreach \i in {0, ..., 6}
        {
            \draw (1/2*\i, -0.05) -- (1/2*\i, 0.05);
        }
        
        \foreach \i in {0, ..., 2}
        {
            \draw (-0.15, \i) -- (-0.05, \i);
        }
        
        \coordinate (origin) at (-0.1, 0);
        \node[left, xshift=-2.5] at (origin) {$0$};
        \node[left, xshift=-2.5] at (-0.1, 2*1) {$1$};
        \coordinate (destination) at (3.25, 2*1);
        
        \coordinate (00) at (0, 0);
        \node[below, yshift=-2.5] at (00) {$0$};

        \coordinate (0+) at (0, 2*1/2);

		\coordinate (mu0) at (1, 0);
		\node[below, yshift=-2.5] at (mu0) {$\mu = 1$};
    
    	\coordinate (t0) at (1.5, 0);
    	\node[below] at (t0) {$t = \frac{3}{2}$};
    	\coordinate (t+) at (1.5, 2*1/2);
    	\coordinate (t++) at (1.5, 2*4/6);
        
        \coordinate (b0) at (3, 0);
        \node[below, yshift=-2.5] at (b0) {$b = 3$};
        \coordinate (b++) at (3, 2*1);
        
        \draw[thick, blue] (origin) -- (00);
        \draw[dotted, blue] (00) -- (0+);
        \draw[thick, blue] (0+) -- (t+);
        \draw[dotted, blue] (t+) -- (t++);
        \draw[thick, blue] (t++) -- (b++);
        \draw[thick, blue] (b++) -- (destination);
        
    \end{tikzpicture}
    \caption{$d = 1$}
    \end{center}
\end{subfigure}%
\begin{subfigure}{.5\textwidth}
    \begin{center}
    \begin{tikzpicture}[scale=2]
        \draw[<->,thick] (-0.1,2*1.25) node[above] (yaxis) {\(\bP(X \leq x)\)} |- (3.25,0) node[right] (xaxis) {$x$};
        
        \foreach \i in {0, ..., 6}
        {
            \draw (1/2*\i, -0.05) -- (1/2*\i, 0.05);
        }
        
        \foreach \i in {0, ..., 2}
        {
            \draw (-0.15, \i) -- (-0.05, \i);
        }
        
        \coordinate (origin) at (-0.1, 0);
        \node[left, xshift=-2.5] at (origin) {$0$};
        \node[left, xshift=-2.5] at (-0.1, 2*1) {$1$};
        \coordinate (destination) at (3.25, 2*1);
        
        \coordinate (00) at (0, 0);
        \node[below, yshift=-2.5] at (00) {$0$};

		\coordinate (mu0) at (1, 0);
		\node[below, yshift=-2.5] at (mu0) {$\mu = 1$};
		\coordinate(mu+) at (1, 2*2/8);
		\coordinate(mu++) at (1, 2*6/8);
    
    	\coordinate (t0) at (1.5, 0);
    	\node[below] at (t0) {$t = \frac{3}{2}$};
    	\coordinate (t+) at (1.5, 2*6/8);
    	\coordinate (t++) at (1.5, 2*1);
        
        \coordinate (b0) at (3, 0);
        \node[below, yshift=-2.5] at (b0) {$b = 3$};
        
        \draw[thick, blue] (origin) -- (00);
        \draw[thick, blue] (00) -- (mu+);
        \draw[dotted, blue] (mu+) -- (mu++);
        \draw[thick, blue] (mu++) -- (t+);
        \draw[dotted, blue] (t+) -- (t++);
        \draw[thick, blue] (t++) -- (destination);
        
    \end{tikzpicture}
    \caption{$d = \frac{1}{4} $}
    \end{center}
\end{subfigure}
\caption{Examples of the extremal distributions that attain the tail probability bound as described in Proposition \ref{prop: worst-cases}.  \label{fig: example distr}}
\end{center}
\end{figure}

Observe that when \(t\) equals \(\tau_1\), \(\mu\), or \(\tau_2\), there is only a single discrete extremal distribution. Figure~\ref{fig: example distr} provides examples of the worst-case distributions for several different parameter settings and values of $t$. Proposition~\ref{prop: worst-cases} shows that the ambiguity set $\cP_{(\mu,b,d)}$ results in a non-trivial collection of worst-case distributions; that is, the mean-MAD approach results in a set that does not solely include discrete distributions with a small number of atoms for $t\notin[\tau_1,\mu]\cup\{\tau_2\}$.


We next consider the tail bounds when also $\beta=\bP(X\geq\mu)$ is known. We therefore consider the extended ambiguity set
\begin{equation}
    \mathcal{P}_{(\mu,b, d,\beta)}=\{\bP : \bP\in\cP_{(\mu, b, d)},\,\bP(X\geq\mu)=\beta\}.
\end{equation}
Using this ambiguity set results in new tight bounds. These results are stated in the following two theorems for which the primal-dual proofs are given in Appendix~\ref{EC:betaproofs}.

\begin{theorem}\label{theorem:betaUB}
Consider a random variable $X$ with a distribution $\mathbb{P}$ in $\mathcal{P}_{(\mu, b, d,\beta)}$. Then,
\begin{equation}
    \sup\limits_{\mathbb{P}\in\mathcal{P}_{(\mu, b, d,\beta)}}\mathbb{P}(X\geq t)=
    \begin{cases}
    1, \quad &  t\in[0,\tau_1], \\
    \frac{(1-\beta)\mu+\beta t}{t}-\frac{d}{2t}, \quad &  t\in[\tau_1,\mu), \\
    \beta, &  t\in[\mu,\tau_2], \\
    \frac{d}{2(t-\mu)}, &  t\in[\tau_2,b], 
    \end{cases}
\end{equation}
with $\tau_1$ and $\tau_2$ given by
\begin{equation*}
    \tau_1 = \mu - \frac{d}{2(1-\beta)},\quad \tau_2 = \mu + \frac{d}{2\beta}.
\end{equation*}
\end{theorem}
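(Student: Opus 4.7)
The plan is to follow the same primal-dual strategy used in the proof of Theorem~\ref{theorem:main}. The extra moment constraint $\int \1\{x\geq\mu\}\,\mathrm{d}\mathbb{P}(x)=\beta$ that defines $\mathcal{P}_{(\mu,b,d,\beta)}$ contributes one additional dual multiplier $\lambda_3$, so the dual majorant becomes
\begin{equation*}
F(x) = \lambda_0 + \lambda_1 x + \lambda_2|x-\mu| + \lambda_3 \1\{x\geq\mu\},
\end{equation*}
which is piecewise linear on $[0,\mu)$ and on $[\mu,b]$ with both a slope change and a jump of size $\lambda_3$ at $\mu$. The proof then reduces to finding, for each range of $t$, the tightest majorant of $\1\{x\geq t\}$ of this shape and exhibiting a matching primal distribution in $\mathcal{P}_{(\mu,b,d,\beta)}$, with strong duality delivered by matching the two objectives.

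I would dispatch the four ranges in turn. For $t\in[\mu,\tau_2]$ the bound $\beta$ is immediate from $\mathbb{P}(X\geq t)\leq\mathbb{P}(X\geq\mu)=\beta$ and is witnessed by the trivial dual $F(x)=\1\{x\geq\mu\}$ (take $\lambda_0=\lambda_1=\lambda_2=0$, $\lambda_3=1$) together with the two-point primal on $\{\tau_1,\tau_2\}$ with weights $(1-\beta,\beta)$, which one checks satisfies all four moment constraints. For $t\in[\tau_2,b]$ I would keep $\lambda_3=0$ and reuse scenario 2a of Theorem~\ref{theorem:main}, whose objective $d/(2(t-\mu))$ is matched by a primal with atoms of mass $d/(2(t-\mu))$ at $t$ and $\beta-d/(2(t-\mu))$ at $\mu$, plus the remaining mass $1-\beta$ spread on $[0,\mu)$ with conditional mean $\tau_1$. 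For $t\in[0,\tau_1]$ the bound $1$ is attained by any $\mathbb{P}\in\mathcal{P}_{(\mu,b,d,\beta)}$ supported on $[t,b]$. The interesting case is $t\in[\tau_1,\mu)$: I would conjecture the tight majorant to be linear through $(0,0)$ and $(t,1)$ on $[0,\mu)$ and identically $1$ on $[\mu,b]$, which corresponds to $\lambda_2=-\lambda_1=-1/(2t)$, $\lambda_0=\mu/(2t)$, $\lambda_3=1-\mu/t$; plugging into $\lambda_0+\lambda_1\mu+\lambda_2 d+\lambda_3\beta$ and simplifying yields the claimed value $\frac{(1-\beta)\mu+\beta t}{t}-\frac{d}{2t}$. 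The thresholds $\tau_1$ and $\tau_2$ then drop out by equating the dual objectives of adjacent scenarios.

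The hard part will be the case $t\in[\tau_1,\mu)$. Here one has to correctly identify the active contact set of the dual majorant, which consists of the two isolated atoms $\{0,t\}$ together with the entire continuum $[\mu,b]$, and then exhibit a mixed primal distribution whose atomic masses at $0$ and $t$, combined with a distribution of total mass $\beta$ on $[\mu,b]$, simultaneously satisfy the mass, mean, MAD and $\beta$ constraints while remaining non-negative precisely on the range $t\geq\tau_1$. Once the candidate majorants are guessed correctly the remaining algebra is routine, but the geometry of the jump at $\mu$ makes the enumeration of scenarios more delicate than in the kink-only setting of Theorem~\ref{theorem:main}.
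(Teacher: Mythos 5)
Your proposal is correct and follows essentially the same primal--dual scenario enumeration as the paper's own proof in Appendix~\ref{EC:betaproofs}, differing only in the (equally valid) choice of primal witnesses, e.g.\ the two-point distribution on $\{\tau_1,\tau_2\}$ in place of atoms on $\{0,t,\mu,b\}$. Incidentally, your multiplier $\lambda_3=1-\mu/t$ for the case $t\in[\tau_1,\mu)$ is the correct one --- it makes $F\equiv 1$ on $[\mu,b]$ and reproduces the stated objective $\frac{(1-\beta)\mu+\beta t}{t}-\frac{d}{2t}$ --- whereas the appendix writes $\lambda_3=(\mu-t)/t$, a sign error that would contradict its own Figure~\ref{fig:majors2} and its claimed dual objective.
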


\begin{theorem}\label{theorem:betaLB}
Consider a random variable $X$ with a distribution $\mathbb{P}$ in $\mathcal{P}_{(\mu, b, d,\beta)}$. Then,
\begin{equation}
    \inf\limits_{\mathbb{P}\in\mathcal{P}_{(\mu, b, d,\beta)}}\mathbb{P}(X > t)=
    \begin{cases}
    1-\frac{d}{2(\mu-t)}, \quad &  t\in[0,\tau_1], \\ 
    \beta, \quad &  t\in[\tau_1,\mu), \\
    \frac{\beta(\mu-t)}{(b-t)} + \frac{d}{2(b-t)}, &  t\in [\mu,\tau_2], \\
    0, &  t\in [\tau_2,b]
    \end{cases}
\end{equation}
with $\tau_1$ and $\tau_2$  given by
\begin{equation*}
    \tau_1 = \mu - \frac{d}{2(1-\beta)},\quad \tau_2 = \mu + \frac{d}{2\beta}.
\end{equation*}
\end{theorem}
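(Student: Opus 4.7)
My plan is to follow the primal-dual linear programming template of Theorem~\ref{theorem:main}, now augmented with a fourth dual variable $\lambda_3$ that encodes the constraint $\bP(X\geq\mu)=\beta$. An alternative route is to mimic Theorem~\ref{theorem:inf}: apply the reflection $Y=2\mu-X$ (which preserves $\mu$, $d$, and the support width while swapping $\beta$ with $1-\beta$) to reduce the infimum to the supremum problem solved in Theorem~\ref{theorem:betaUB}, after invoking the shift-of-support extension noted just below Theorem~\ref{theorem:main}. I would pursue the direct primal-dual derivation because it also identifies the worst-case distributions as a by-product, but it is reassuring to know that the reflection produces the same break-points.

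The dual of $\inf_{\bP\in\mathcal{M}^+}\int \1\{x>t\}\,{\rm d}\bP(x)$ subject to the four moment conditions is a maximization over $(\lambda_0,\lambda_1,\lambda_2,\lambda_3)$ with objective $\lambda_0+\lambda_1\mu+\lambda_2 d+\lambda_3\beta$ and \emph{minorant} constraint
\begin{equation*}
F(x) := \lambda_0+\lambda_1 x+\lambda_2|x-\mu|+\lambda_3\1\{x\geq\mu\}\leq\1\{x>t\},\qquad \forall x\in[0,b].
\end{equation*}
The essential new structural feature is that $F$ is piecewise affine on $[0,\mu)$ and on $[\mu,b]$ and may \emph{jump} at $\mu$ by $\lambda_3$, in addition to having a kink there; this is exactly why the bound exhibits a constant $\beta$-plateau.

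Scenario enumeration then produces four candidates. For $t\in[0,\mu)$, the largest feasible $F$ is either the affine segment $(x-t)/(\mu-t)$ on $[0,\mu)$ continued by $F\equiv1$ on $[\mu,b]$ (giving dual value $1-d/(2(\mu-t))$), or $F(x)=\1\{x\geq\mu\}$ with $\lambda_3=1$ and the other duals zero (giving dual value $\beta$). For $t\in[\mu,b]$, the symmetric pair is $F\equiv 0$ on $[0,\mu)$ combined with $F(x)=(x-t)/(b-t)$ on $[\mu,b]$ (giving dual value $\beta(\mu-t)/(b-t)+d/(2(b-t))$), or $F\equiv 0$ identically (giving $0$). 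The bound on each subinterval is the maximum of the feasible candidates; equating adjacent expressions in $t$ recovers
\begin{equation*}
\tau_1=\mu-\frac{d}{2(1-\beta)},\qquad \tau_2=\mu+\frac{d}{2\beta}.
\end{equation*}
Primal tightness follows from complementary slackness exactly as in Theorem~\ref{theorem:main}: any optimizer $\bP^*$ must live on the contact set $\{x:F^*(x)=\1\{x>t\}\}$, and one writes down an explicit (discrete or mixed) measure on that set, solving a small linear system for the weights so that all four constraints hold and the primal objective matches the dual value.

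I expect the main obstacle to be the scenario enumeration rather than any individual computation: the extra jump degree of freedom enlarges the family of admissible minorants, so one must rule out hybrid configurations (an affine piece on $[0,\mu)$ combined with a non-trivial affine piece on $[\mu,b]$ and a generic jump) and verify that the four listed candidates already attain the pointwise supremum of feasible dual values. A secondary care-point is the asymmetric interaction of the $\beta$-constraint with atoms at $\mu$, mirroring the strict-versus-non-strict subtlety acknowledged at the end of the proof of Theorem~\ref{theorem:main}: the primal witnesses in the middle two cases should be built so that any mass placed at $\mu$ contributes the correct amount to $\beta$.
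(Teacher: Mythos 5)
Your proposal is correct and follows essentially the same route as the paper's proof in Appendix~\ref{EC:betaproofs}: the four minorant candidates you list are exactly the paper's scenarios 1a, 1b, 2a and 2b (with matching dual values $1-\tfrac{d}{2(\mu-t)}$, $\beta$, $\tfrac{\beta(\mu-t)}{b-t}+\tfrac{d}{2(b-t)}$ and $0$), primal tightness is established the same way via complementary slackness on the contact set, and the thresholds $\tau_1,\tau_2$ arise by equating adjacent candidates just as in the paper.
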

Note that for these bounds equality between $\bP(X\geq t)$ and $\bP(X > t)$ does not hold. In particular, the bounds admit a jump discontinuity at $\mu$ for all distributions with $\beta\neq 1-\frac{d}{2\mu}$. In Figure~\ref{fig:illustrationbounds} the upper and lower bounds are depicted for the ambiguity set that considers all distributions with $\mu=0.5$, $d=0.1875$, $\beta=0.5$, $a=0$, and $b=1$. As a point of reference, the $\text{Beta}(2,2)$ tail distribution, which is a member of the ambiguity set, is also plotted.


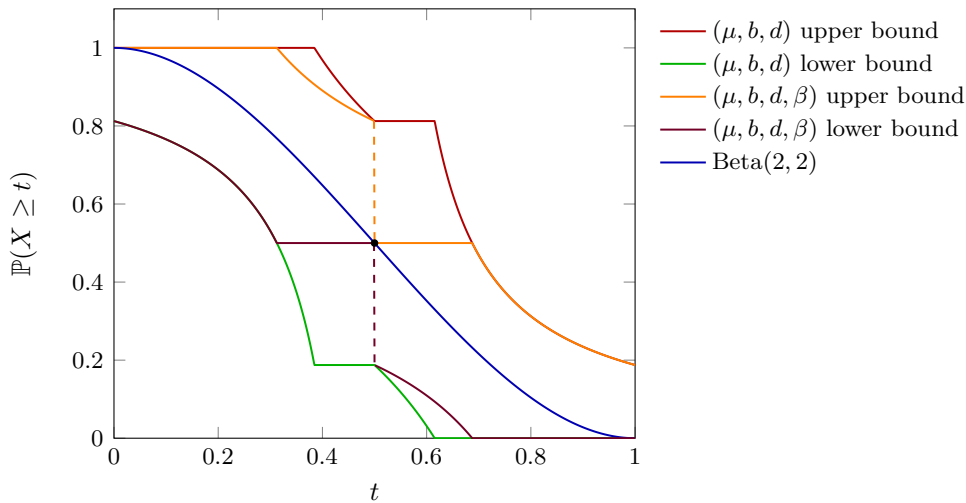
\begin{figure}[h!]
\begin{center}
\begin{tikzpicture}
\begin{axis}[
	xlabel={$t$},
	xmin=0, xmax=1,
	ymin=0, ymax=1.1,
    ylabel={$\bP(X \geq t)$},
    ylabel style={yshift=0.2cm},
    xticklabel style={
    	/pgf/number format/precision=4,
    	/pgf/number format/fixed,
	},
	yticklabel style={/pgf/number format/precision=4, /pgf/number format/fixed},
    label style={font=\small},
    ticklabel style={font=\footnotesize},
    legend style={draw=none, font=\footnotesize},
    legend cell align={left},
    legend pos = outer north east
]
\addplot[red!70!black, line width=0.75pt] table {Figure_Data/Comparison_MADUB.dat};

\addlegendentry{${(\mu,b,d)}$ upper bound};
\addplot[green!70!black, line width=0.75pt] table {Figure_Data/Comparison_MADLB.dat};

\addlegendentry{$(\mu,b,d)$ lower bound};
\addplot[orange, line width=0.75pt] table {Figure_Data/Comparison_betaUBp1.dat};
\addlegendentry{$(\mu,b,d,\beta)$ upper bound};
\addplot[purple!60!black, line width=0.75pt] table {Figure_Data/Comparison_betaLBp1.dat};
\addlegendentry{$(\mu,b,d,\beta)$ lower bound};
\addplot[blue!70!black, line width=0.75pt] table {Figure_Data/Comparison_exactUniform.dat};
\addlegendentry{$\text{Beta}(2,2)$};
\addplot[orange, line width=0.75pt, dashed] table {Figure_Data/Comparison_betaUBp2.dat};
\addplot[orange, line width=0.75pt] table {Figure_Data/Comparison_betaUBp3.dat};
\addplot[purple!60!black, dashed, line width=0.75pt] table {Figure_Data/Comparison_betaLBp2.dat};
\addplot[purple!60!black, line width=0.75pt] table {Figure_Data/Comparison_betaLBp3.dat};
\node at (0.5,0.5)[circle,fill,inner sep=1pt]{};
\end{axis}
\end{tikzpicture}
\end{center}
    \caption{An illustration of the mean-MAD-$\beta$ bounds for the tail probability where the ambiguity set consists of all distributions with $\mu=0.5$, $d=0.1875$, $\beta=0.5$, $a=0$, and $b=1$. The blue line represents the tail distribution of a $\text{Beta}(2,2)$ distribution.}
    \label{fig:illustrationbounds}
\end{figure}

\subsection{Comparison with other bounds}\label{sec:compare}

Closely related to our results is the discussion in section 4.1 of \cite{ghosal2020distributionally} . In particular, they consider, among others, an ambiguity set given by
\[\tilde{\cP}_{(\mu, b, d)} = \left\{\bP : \mathcal{B} \to \left[0, 1\right] \relmiddle| \bP\left[ X \in [0, b] \right] = 1, \enskip \bE_\bP\left[ X \right] = \mu, \enskip \bE_\bP\left[ | X - \mu | \right] \leq d \right\}.\]
The only difference with the ambiguity set we use is the inclusion of all distributions with a lower mean absolute deviation. This has major implications for the maximum and minimum probability to exceed \(t\), however. First of all, it should be noted that the distribution with all its probability mass on \(\mu\) is an element of \(\tilde{\cP}_{(\mu, b, d)}\) for any value of \(d\). This means that for any \(t \leq \mu\) it holds that
\[\sup_{\bP \in \tilde{\cP}} \bP \left(X \geq t\right) = 1.\]
Moreover, for any \(t > \mu\) and \(d > \frac{2 \mu \left(t - \mu\right)}{t}\), the maximum probability of \(X\) exceeding \(t\) is attained by a distribution with a mean absolute deviation equal to \(\frac{2 \mu \left(t - \mu\right)}{t}\), which is explained by the observation that the bound we obtain is decreasing in \(d\) for \(d > \frac{2 \mu \left(t - \mu\right)}{t}\). 

Clearly, because of the above observations, the theoretical maximum of \(\bP\left(X > t\right)\) has a much simpler closed-form solution than \eqref{eq: worst-case prob} for the ambiguity set \(\tilde{\cP}_{(\mu, b, d)}\). A big downside is that many of the extra distributions contained in \(\tilde{\cP}_{(\mu, b, d)}\) but not in \(\cP_{(\mu, b, d)}\) might be unrealistic. Especially when the mean absolute deviation is known or can be accurately estimated, there is little reason to consider distributions with a different (in this case lower) mean absolute deviation. For large values of \(d\) relative to \(t\) in particular, using \(\tilde{\cP}_{(\mu, b, d)}\) can lead to an overestimation of the maximum value of \(\bP \left( X > t\right)\). The observation that the maximum value of \(\bP \left( X > t\right)\) is decreasing in \(d\) for large values of \(d\) also means that considering distributions with a lower mean absolute deviation can lead to a higher bound on \(\bP \left( X > t\right)\).

Comparing the result of Theorem~\ref{theorem:main} to Cantelli's inequality \citep{chebyshev1867valeurs} is harder, since we assume the mean absolute deviation to be known, but not the variance. Hence, some relation between these two quantities is needed to be able to make a comparison. In particular, we will use that
\begin{equation} \label{eq: sigma bounds} 
\frac{d^2}{4\beta\left(1-\beta\right)} \leq \sigma^2 \leq \frac{d b}{2},
\end{equation}
where \(\beta = \bP\left(X > \mu \right)\)~\citep{ben1985approximation}. We note that this also implies \(d \leq \sigma\). Throughout the comparison below we assume that \(d\) is given and compare the bound obtained in Theorem~\ref{theorem:main} with Cantelli's bound for different values of \(\sigma\) satisfying \eqref{eq: sigma bounds}. Figure~\ref{fig: ub comparison} illustrates this comparison for a simple numerical example with the following parameters: \(a = -1\), \(\mu = 0\), \(b = 1\), \(d = \frac{1}{4}\). We consider three values for \(\sigma\): \(\sigma = d = \frac{1}{4}\), \(\sigma = \frac{1}{3}\) and \(\sigma = \sqrt{\frac{d b}{2}} = \frac{1}{2}\). 
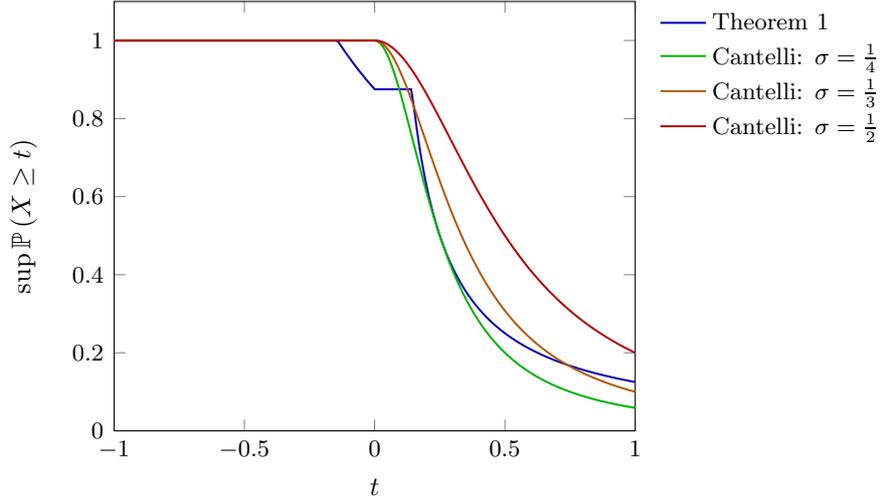
\begin{figure}
\begin{center}
\begin{tikzpicture}
\begin{axis}[
	xlabel={\(t\)},
	xmin=-1, xmax=1,
	ymin=0, ymax=1.1,
    ylabel={\(\sup \bP\left(X \geq t\right)\)},
    ylabel style={yshift=0.2cm},
    xticklabel style={
    	/pgf/number format/precision=4,
    	/pgf/number format/fixed,
	},
	yticklabel style={/pgf/number format/precision=4, /pgf/number format/fixed},
    label style={font=\small},
    ticklabel style={font=\footnotesize},
    legend style={draw=none, font=\footnotesize},
    legend cell align={left},
    legend pos = outer north east
]
\addplot[blue!70!black, line width=0.75pt] table {Figure_Data/Comparison_mu_d.dat};
\addlegendentry{Theorem \ref{theorem:main}};
\addplot[green!70!black, line width=0.75pt] table {Figure_Data/Comparison_sigma_low.dat};
\addlegendentry{Cantelli: \(\sigma = \frac{1}{4}\)};
\addplot[orange!70!black, line width=0.75pt] table {Figure_Data/Comparison_sigma_med.dat};
\addlegendentry{Cantelli: \(\sigma = \frac{1}{3}\)};
\addplot[red!70!black, line width=0.75pt] table {Figure_Data/Comparison_sigma_high.dat};
\addlegendentry{Cantelli: \(\sigma = \frac{1}{2}\)};
\end{axis}
\end{tikzpicture}
\end{center}
\caption{A comparison of the bound in Theorem~\ref{theorem:main} with Cantelli's bound for three different values of \(\sigma\) with the parameter values \(a = -1\), \(\mu = 0\), \(b = 1\) and \(d = \frac{1}{4}\). \label{fig: ub comparison}}
\end{figure}

Figure~\ref{fig: ub comparison} gives rise to a number of interesting observations. First of all, we note that since Cantelli's bound is 1 for any \(t \leq \mu\), the bound from Theorem~\ref{theorem:main} is at most Cantelli's bound as it includes an interval for which it is not 1. Furthermore, the flat area in the blue line corresponds to the values of \(t\) such that 
\[\min \Big\{\frac{d}{2\left(t - \mu\right)}, \: 1 - \frac{d}{2\mu} \Big\} = 1 - \frac{d}{2 \mu },\]
which corresponds to all \(\mu \leq t \leq \tau^* := \mu + \frac{d \mu}{2 \mu - d}\).
Moreover, we note that for \(\sigma = d\), Cantelli's bound is lower than \eqref{eq: worst-case prob} for all \(\tau^* \leq t \leq b\). This is true for all parameters as:
\begin{align*}
\frac{d^2}{d^2 + \left(t - \mu\right)^2}  \enskip & = \frac{d^2}{d^2 + (t - \mu)^2 -2d(t - \mu) + 2d(t - \mu)} \\
& = \frac{d^2}{\left(d - (t - \mu)\right)^2 + 2d(t-\mu)} \\
& \leq \frac{d^2}{2d(t - \mu)} \\
& = \frac{d}{2\left(t - \mu\right)}.
\end{align*}
In particular, for \(\sigma = d\) Cantelli's bound and \eqref{eq: worst-case prob} always coincide at \(t = \mu + d\), since:
\[\frac{d^2}{d^2 + d^2} = \frac{1}{2} = \frac{d}{2d}.\]
If, on the other hand, we choose \(\sigma = \sqrt{\frac{d b}{2}}\), its highest possible value, Cantelli's bound is higher than \eqref{eq: worst-case prob}. This is true for all parameter values as well, as Cantelli's bound is increasing in \(\sigma\) and must thus be at least \eqref{eq: worst-case prob} for its highest possible value.

For intermediate values of \(\sigma\), we observe behavior similar to the line corresponding to \(\sigma=\frac{1}{3}\) in Figure~\ref{fig: ub comparison}. More specifically, we find that \eqref{eq: worst-case prob} is lower than Cantelli's bound for all \(t\) in the two intervals \([0, \hat{\tau}]\) and\([\underline{\tau}, \overline{\tau}]\), with the three boundaries given by
\begin{align*}
\hat{\tau} &= \mu + \sqrt{\frac{d \sigma^2}{2(\mu-a) -d}},\\
\underline{\tau} &= \frac{\sigma^2}{d} - \sigma \sqrt{\frac{\sigma^2}{d^2} - 1},\\
\overline{\tau} &= \min \Big\{b, \frac{\sigma^2}{d} + \sigma \sqrt{\frac{\sigma^2}{d^2} - 1} \Big\}.
\end{align*}

Note that for some \(\sigma\), such as \(\sigma = \sqrt{\frac{d b}{2}}\) in Figure~\ref{fig: ub comparison}, it holds that \(\hat{\tau} \geq \underline{\tau}\), that is, \eqref{eq: worst-case prob} is lower than Cantelli's bound for all \(t\in [\mu, \overline{\tau}]\). To visually clarify all boundaries discussed above, Figure~\ref{fig: all taus} only shows Cantelli's bound for \(\sigma = 0.27\) and marks \(\tau^*\), \(\hat{\tau}\), \(\underline{\tau}\) and \(\overline{\tau}\).
\begin{figure}
\begin{center}
\begin{tikzpicture}
\begin{axis}[
	xlabel={\(t\)},
	xmin=0, xmax=1,
	ymin=0, ymax=1,
    ylabel={\(\bP\left(X \geq t\right)\)},
    ylabel style={yshift=0.2cm},
    xticklabel style={
    	/pgf/number format/precision=4,
    	/pgf/number format/fixed,
	},
	yticklabel style={/pgf/number format/precision=4, /pgf/number format/fixed},
    label style={font=\small},
    ticklabel style={font=\footnotesize},
    legend style={draw=none, font=\footnotesize},
    legend cell align={left}
]
\addplot[blue!70!black, line width=0.75pt] table {Figure_Data/Comparison_mu_d.dat};
\addlegendentry{Theorem \ref{theorem:main}};
\addplot[orange!70!black, line width=0.75pt] table {Figure_Data/Illustration_tau.dat};
\addlegendentry{Cantelli: \(\sigma = 0.27\)};
\end{axis}


\draw[color=black, style=dashed] (0.7000, 4.9836) node[below, xshift=-5] {\(\hat{\tau}\)} -- (0.7000, 0);

\draw[color=black, style=dashed] (0.9797, 4.9836) node[above, xshift=7] {\(\tau^*\)} -- (0.9797, 5.6955);

\draw[color=black, style=dashed] (1.2444, 3.9236) node[right, xshift=1, yshift=-3] {\(\underline{\tau}\)} -- (1.2444, 0);

\draw[color=black, style=dashed] (2.7541, 1.7719) node[above, xshift=1] {\(\overline{\tau}\)} -- (2.7541, 0);
\end{tikzpicture}
\end{center}
\caption{An illustration of \(\hat{\tau}\), \(\tau^*\), \(\underline{\tau}\) and \(\overline{\tau}\) for the parameter values \(a = -1\), \(\mu = 0\), \(b = 1\) and \(d = 0.25\). \label{fig: all taus}}
\end{figure}
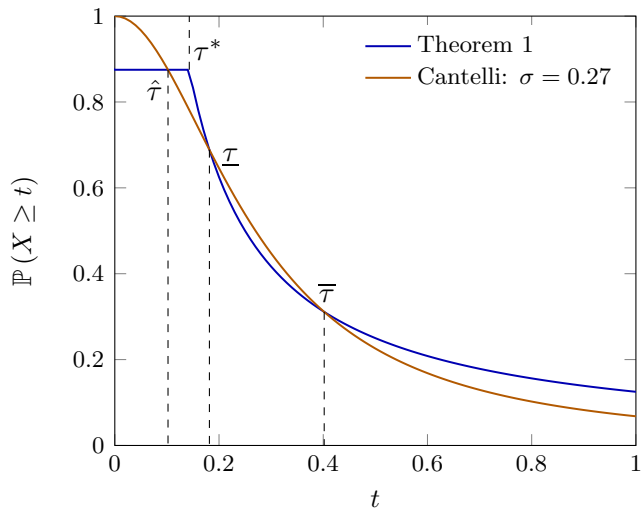

\subsection{Prior work on Chebyshev-type tail bounds}
Multivariate generalizations of Chebyshev's inequality have also been studied. In \cite{bertsimas2005optimal} and \cite{vandenberghe2007generalized} generalizations are studied through formulating a convex optimization problem, given that the prescribed confidence region can be described by polynomial or linear and quadratic inequalities, respectively. In \cite{grechuk2010chebyshev} on the other hand, closed-form variants of Chebyshev's inequality are provided for different dispersion measures than the variance. Generalized versions of Chebyshev's inequality for products of random variables that focus on a one-sided inequality have also received some attention recently~\citep{rujeerapaiboon2018chebyshev}.

While Chebyshev's inequality is tight, it has been criticized for only being attained by pathological distributions that abuse the unboundedness of the underlying support and are not considered realistic in many applications~\citep{vanparys2016generalized}. A variant of the Chebyshev inequality that was already considered in \cite{gauss1821theoria} restricts the distributions it considers to be unimodal. This yields an improvement by a factor \(\frac{4}{9}\) over the classical Chebyshev inequality. This idea of including unimodality has been extended to the multivariate case recently as well~\citep{vanparys2016generalized}.

All the above mentioned inequalities, however, still assume an unbounded support. \cite{schepper1995general} mention tail probability bounds that incorporate the upper bound of the random variable's range. A comparison is provided in Appendix~\ref{app: deschepper}. Unfortunately, these bounds are only attained by rather pathological probability measures, i.e., point distributions with two or three atoms. Using the MAD instead of the variance results in a richer class of worst-case distributions.

\section{Distribution-free analysis of OR models} \label{sec: applications}

We now turn to three classical OR models: the newsvendor problem, monopoly pricing and stop-loss reinsurance. These three models can be subjected to distribution-free analyses that make direct use of the novel Chebyshev bounds. This leads to closed-form solutions of the associated maxmin optimization. The common theme is that with ambiguity described in terms of mean, MAD and restricted support, distribution-free analysis  leads to valuable structural insights, while unrestricted  support often yields degenerate results. 

\subsection{Newsvendor problem} \label{sec: newsvendor}

The newsvendor problem serves to find the order quantity that maximizes the expected profit for a single period given a stochastic demand. Denote by $q$ the order quantity  (number of units) and by $D$ the stochastic demand during a single selling period. Per unit,  $p$ denotes the selling price and $c$ the purchase cost.  Let $p>c$, and assume without loss of generality that unsold units have zero salvage value. 
The expected profit is $\mathbb{E}_\mathbb{P}[\pi(q, D)]$ with $D\sim \mathbb{P}$  and 
$$
\pi(q, D) = p \min\{q, D\}-cq. 
$$
The decision maker then chooses the optimal order quantity $q^*$ that solves $\max_q\mathbb{E}_\mathbb{P}[\pi(q, D)]$. This solution is known to be the $\eta:=(p-c)/p$ quantile (critical quantile) of the distribution of $D$, that is, 
\begin{equation}\label{critqua}
q^*=\min\{q \, : \, \mathbb{P}(D\leq q) \geq \eta\}.
\end{equation} 
In practice, however,  the decision maker might only know partial information on the demand distribution. \cite{Scarf1958} pioneered distribution-free analysis of the newsvendor problem, when only the mean and the variance of the demand are known. Scarf obtained the optimal order quantity for the worst case demand, turning the newsvendor into a maxmin decision maker that solves
\begin{equation}\label{maxminscarf}
\max_q\inf\limits_{\mathbb{P} \in \mathcal{P}_{(\mu , \sigma)}}
\bE_{\mathbb{P}} \left[\pi(q, D) \right] ,
\end{equation}
with $\mathcal{P}_{(\mu ,\sigma)}$  the ambiguity set that contains all distributions with a given mean $\mu$ and variance $\sigma^2$, and 
solution
\begin{equation}\label{optqsca}
q^S=
\left\{\begin{array}{ll}
0,&  {\rm if} \  \eta< \frac{\sigma^2}{\mu^2+\sigma^2},\\
\mu+\frac{\sigma}{2}\Big(\sqrt{\frac{1}{\eta(1-\eta)}} - 2\sqrt{\frac{1}{\eta}-1} \Big),&  {\rm if} \  \eta\geq \frac{\sigma^2}{\mu^2+\sigma^2}.
 \end{array}\right.
\end{equation}

We shall instead consider all demand distributions with given mean $\mu$, MAD $d$ and support $[0,b]$, and consider
\begin{equation}\label{maxminbental}
\max_q\inf\limits_{\mathbb{P} \in \mathcal{P}_{(\mu , d,b)}}
\bE_{\mathbb{P}} \left[\pi(q, D) \right].  
\end{equation}
This is the counterpart of problem \eqref{maxminscarf}. \cite{Scarf1958} solved  \eqref{maxminscarf} directly, computing the lower bound $\inf_{\mathbb{P} \in \mathcal{P}_{(\mu , \sigma)}}\bE_{\mathbb{P}} \left[\pi(q, D) \right]$ via a linear program. 
Instead, we do not solve \eqref{maxminbental} directly, but apply the robust Chebyshev bounds to the first-order condition for $q^*$ in \eqref{critqua}. 
Clearly, tight lower and upper bounds for this quantile follow from 
$\inf_{\mathbb{P} \in \mathcal{P}_{(\mu , d,b)}}\mathbb{P}(D>q)$ and $\sup_{\mathbb{P} \in \mathcal{P}_{(\mu , d,b)}}\mathbb{P}(D>q)$, respectively, providing an interval that contains the optimal order quantity $q^*$.

\begin{theorem}[Order quantity bounds under mean-MAD-range ambiguity]\label{newsmadthm}  Suppose the newsvendor knows the mean $\mu$, the mean absolute deviation $d$ and the upper 
bound $b$ of the demand distribution $\mathbb{P}(D\leq q)$. The optimal order quantity $q^*$ that solves $\max_q\mathbb{E}_\mathbb{P}[\pi(q, D)]$ is then contained in the interval $[q^L,q^U]$ with
\begin{align}\label{optimalqcase1}
[q^L,q^U]&=
    \begin{cases}
        \  \Big[0,\, \frac{2\mu(b-\mu)-bd}{2(b-\mu)(1-\eta)-d}\Big], & {\rm if } \ \eta< \frac{d}{2\mu},\\
    \  \Big[\mu-\frac{d}{2\eta},\,\mu+\frac{d}{2(1-\eta)}\Big],  \quad & {\rm if }\ \frac{d}{2\mu}\leq \eta\leq 1-\frac{d}{2(b-\mu)}, \\
   \       \Big[\frac{\mu-b(1-\eta)}{\eta-d/(2\mu)},\, b\Big],  \quad & {\rm if }\ \eta\geq 1-\frac{d}{2(b-\mu)}. \\
    \end{cases}
    \end{align}
\end{theorem}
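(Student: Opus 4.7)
The starting point is the classical critical-fractile relation in \eqref{critqua}: for any fixed $\bP \in \cP_{(\mu,d,b)}$ the expected-profit maximizer is the $\eta$-quantile
\[
    q^*(\bP) = \min\{q : \bP(D \leq q) \geq \eta\} = \min\{q : \bP(D > q) \leq 1-\eta\}.
\]
Since the decision maker knows only that $\bP$ lies in the ambiguity set, the smallest interval guaranteed to contain $q^*(\bP)$ is $[\inf_\bP q^*(\bP),\,\sup_\bP q^*(\bP)]$, and these two endpoints can be read off from the tail-probability bounds of Theorems~\ref{theorem:main} and~\ref{theorem:inf} without ever solving the maxmin objective directly.

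Let $U(q) \coloneqq \sup_{\bP} \bP(D>q)$ and $L(q) \coloneqq \inf_{\bP} \bP(D>q)$, so Theorems~\ref{theorem:main}--\ref{theorem:inf} supply $U$ and $L$ in piecewise closed form. A short argument shows that $q^U = \inf\{q : U(q) \leq 1-\eta\}$ and $q^L = \inf\{q : L(q) \leq 1-\eta\}$: for $q^L$, the inequality $L(q) \leq 1-\eta$ is equivalent to the existence of some $\bP \in \cP_{(\mu,d,b)}$ with $q^*(\bP) \leq q$, so taking the infimum over $q$ gives the infimum of achievable quantiles, and the extremal distribution provided by Proposition~\ref{prop: worst-cases} is itself feasible and attains equality. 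The $q^U$ argument is symmetric. Because $U$ and $L$ are non-increasing and continuous on $[0,b]$, each infimum reduces to locating the crossing of the horizontal line at height $1-\eta$ with the appropriate curve.

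The remainder is a case split on where $1-\eta$ lies relative to the plateau heights $1-d/(2\mu)$ of $U$ and $d/(2(b-\mu))$ of $L$. For $\eta < d/(2\mu)$ one has $L(0) = 1-d/(2\mu) < 1-\eta$, so $q^L = 0$, while the line hits $U$ on the branch $U(q) = \mu/q - d(b-q)/[2q(b-\mu)]$, producing after a linear solve the upper endpoint in the first case of \eqref{optimalqcase1}. For $d/(2\mu) \leq \eta \leq 1-d/(2(b-\mu))$ both crossings sit beyond the plateaus: $L$ on $[0,\tau_1]$ gives $q^L = \mu - d/(2\eta)$ and $U$ on $[\tau_2,b]$ gives $q^U = \mu + d/(2(1-\eta))$. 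For $\eta \geq 1-d/(2(b-\mu))$ the bound $U(b) = d/(2(b-\mu)) \leq 1-\eta$ forces $q^U = b$, and the $[\mu,\tau_2]$ branch of $L$ yields the lower endpoint $q^L = [\mu - b(1-\eta)]/[\eta - d/(2\mu)]$ after another linear solve.

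The main obstacle is not the algebra but the justification that the interval $[q^L,q^U]$ is genuinely the range of $\bP \mapsto q^*(\bP)$: one must exhibit, for each target tail-level $1-\eta$, a distribution in $\cP_{(\mu,d,b)}$ whose critical quantile actually equals the crossing value. Proposition~\ref{prop: worst-cases} does this at the extremal points by characterizing the attainers of $U$ and $L$. A secondary wrinkle is the behavior at the regime boundaries $\eta \in \{d/(2\mu),\,1-d/(2(b-\mu))\}$, where the plateaus of $U$ and $L$ coincide with $1-\eta$; I would verify that the case formulas agree there by substitution into the adjacent case expressions, so that the three branches of \eqref{optimalqcase1} glue together consistently.
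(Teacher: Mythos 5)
Your proposal is correct and is essentially the paper's own (largely implicit) argument: the paper likewise derives Theorem~\ref{newsmadthm} by applying the tail bounds of Theorems~\ref{theorem:main} and~\ref{theorem:inf} to the critical-fractile condition \eqref{critqua} and locating where $1-\eta$ crosses the piecewise expressions, and your case-by-case algebra reproduces all three branches of \eqref{optimalqcase1}. One trivial slip: in the third regime you write $U(b)=d/(2(b-\mu))\leq 1-\eta$, but $\eta\geq 1-d/(2(b-\mu))$ gives $U(b)\geq 1-\eta$; the correct reading is that $U(q)>1-\eta$ for all $q<b$ (a worst-case distribution places mass $d/(2(b-\mu))$ at $b$), which is what forces $q^U=b$.
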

The theorem provides various handles for a robust policy that responds to the uncertainty captured in $\mathcal{P}_{(\mu , d,b)}$. The lower bound $q^L$ follows from the worst-case demand distribution. Observe that $q^L$ is larger than $\mu$ when the profit margin $\eta$ exceeds $1-d/(2(b- \mu))$, and smaller than $\mu$ otherwise. This insight can be contrasted with $q^S$ in \eqref{optqsca} that also considers the worst-case scenario, but then in view of $\mathcal{P}_{(\mu ,\sigma)}$ ambiguity. Scarf's $q^S$ is larger than $\mu$ if $\eta>1/2$ and smaller than $\mu$ otherwise. Hence, $q^L$ quantifies the dependency on $b$, where $q^S$ does not. In particular, when the profit margin $\eta$ is fixed, the pessimistic newsvendor that uses $q^L$ will only order above the mean when $b$ does not exceed $\mu+d/(2(1-\eta))$. 

Table~\ref{res:newstable} shows that the support $[0,b]$ also influences the intervals $[q^L,q^U]$, in particular for low and high profit margins. We also recognize the three different regimes in Theorem~\ref{newsmadthm} that correspond to low margins, average margins and high margins.

\begin{table}[h!]
\begin{center}
\begin{tabular}{rrrrr}\hline
{ $\eta$}     & { $b=10$}           & {$b=15$}           & { $b=20$}            & { $b=\infty$}           \\ \hline
0.01 & $[0.00, 4.17]$     & $[0.00, 4.23]$      & $[0.00, 4.26]$      & $[0.00, 4.29]$      \\
0.1  & $[0.00, 4.67]$     & $[0.00, 4.70]$      & $[0.00, 4.71]$      & $[0.00, 4.72]$      \\
0.2  & $[ 1.25, 5.94]$ & $[ 1.25, 5.94]$  & $[ 1.25, 5.94]$  & $[ 1.25, 5.94]$  \\
0.4  & $[ 3.13, 6.25]$ & $[ 3.13, 6.25]$  & $[ 3.13, 6.25]$  & $[ 3.13, 6.25]$  \\
0.5  & $[ 3.50, 6.50]$ & $[ 3.50, 6.50]$  & $[ 3.50, 6.50]$  & $[ 3.50, 6.50]$  \\
0.7  & $[ 3.93, 7.50]$ & $[ 3.93, 7.50]$  & $[ 3.93, 7.50]$  & $[ 3.93, 7.50]$  \\
0.9  & $[ 5.33,10.00]$    & $[ 4.17, 12.50]$ & $[ 4.17, 12.50]$ & $[ 4.17, 12.50]$ \\
0.95 & $[ 5.63,10.00]$    & $[ 5.31,15.00]$     & $[ 5.00,20.00]$     & $[ 4.21, 20.00]$    \\
0.99 & $[ 5.83,10.00]$    & $[ 5.77,15.00]$     & $[ 5.71,20.00]$     & $[ 4.24, 80.00]$    \\ \hline   
\end{tabular}
\end{center}
\caption{The intervals $[q^L,q^U]$ for mean-MAD ambiguity with $\mu=5$, $d=1.5$ and various profit margins $\eta$.}\label{res:newstable}
\end{table}

We mention two further works related to Theorem~\ref{newsmadthm}.  \cite{BenTal1976} use general techniques for stochastic programs with limited information such as \eqref{maxminbental}. For such stochastic programs the available information is often not sufficient to find the optimal solution. \cite{BenTal1976} develop a method to construct the minimal set that should contain the optimum. They also demonstrate this technique for the newsvendor model with given mean and MAD, but unbounded support, and obtain intervals that indeed arise from Theorem~\ref{newsmadthm} for the limit $b\to\infty$: \begin{align}\label{optimalrcase1special}
[q^L,q^U]&=
\left\{\begin{array}{ll}
\left[0,\frac{\mu-{d}/{2}}{1-\eta}\right]&  {\rm when} \  \eta< \frac{d}{2\mu},\\
\left[\mu-\frac{d}{2\eta},\mu+\frac{d}{2(1-\eta)}\right]&  {\rm when} \  \eta\geq \frac{d}{2\mu}.
 \end{array}\right.
\end{align}

\cite{semivariance} introduce semi-variance as an extra piece of information about the skewness of the distribution. Together with the mean and variance, this results in a more restrictive ambiguity set (compared to Scarf), and therefore a less conservative (or sharper) estimation of $q^*$.  Theorem~\ref{newsmadthm} can also be viewed as a way to address conservatism, by taking into account the finite support. We could even restrict the ambiguity further by using the robust Chebyshev bound with the additional $\mathbb{P}(X\geq \mu)=\beta$ constraint, which like  semi-variance measures skewness. We apply these bounds to the newsvendor model in Appendix~\ref{newsskew}. 

Apart from modifying or narrowing the ambiguity set, conservatism can be alleviated by choosing alternate objective functions, for instance replacing the profit function by a regret function (opportunity cost of not making the optimal decision)
\citep{yue2006expected,perakis2008regret}, or by extending the profit function with a utility function $u(\cdot)$ for max-min analysis of 
$\mathbb{E}[u(\pi(q,D))]$  \citep{han2014risk}. See 
\cite{semivariance} for an extensive review of many other studies on distribution-free newsvendor models.  The robust Chebyshev bounds developed in this paper can be used  for distribution-free analysis of more advanced models, including those modeling regret and utility mentioned above, the risk-averse newsvendor with stochastic price-dependent demand \citep{chen2009risk} and multi-product settings \citep{choi2011multiproduct}.

\subsection{Monopoly pricing} \label{sec: pricing}
The monopoly pricing problem maximizes a seller's profit when selling a single object to a buyer who is willing to pay some unknown value $B$. Traditionally, it is assumed that $B$ is drawn from some distribution $\prob{B\leq r}$ that is known to the seller, so that the seller can set the optimal price. When there is a single buyer, the optimal strategy is to post a fixed price $r$ that maximizes the expected profit $r\prob{B> r}$; see \cite{riley1983optimal} and \cite{myerson1981optimal}. 
The seller thus faces the tradeoff between price and sale, because the probability of sale $\prob{B> r}$ decreases with the price $r$. 

We consider a robust variant of this model, where instead of knowing the distribution, the seller only knows that the distribution of $B$ is contained in $\mathcal{P}_{(\mu , d,b)}$
and chooses the price that maximizes the worst-case expected profit: 
\begin{equation}\label{mpp}
\max_r\inf\limits_{\mathbb{P} \in \mathcal{P}_{(\mu , d,b)}}
r\prob{B> r} .
\end{equation}
Refer to the solution to this maxmin optimization problem as the robustly optimal (or maxmin) price $r^L$. The tight lower bound for $r\prob{B> r} $, denoted by $F(r)$, follows from the robust Chebyshev bound for the tail probability  $\prob{B> r}$ in \eqref{eq:thm2}. 
The resulting worst-case profit function turns out not be concave and in fact to have multiple local maxima, as illustrated in Figure~\ref{fig:plotsF}.

\begin{figure}[h]
\begin{center}
\begin{tikzpicture}
\begin{axis}[
	xlabel={$r$},
	xmin=0, xmax=1,
	ymin=0, ymax=0.22,
    ylabel={$F(r)$},
    ylabel style={yshift=0.2cm},
    xticklabel style={
    	/pgf/number format/precision=4,
    	/pgf/number format/fixed,
	},
    scaled x ticks=false,
	yticklabel style={/pgf/number format/precision=4, /pgf/number format/fixed},
	scaled y ticks=false,
    label style={font=\small},
    ticklabel style={font=\footnotesize},
    y label style={at={(current axis.above origin)},rotate=-90,anchor=south,yshift=0.2cm},
    legend style={draw=none, font=\footnotesize},
    legend cell align={left},
    legend pos = outer north east
]
\addplot[blue!70!black, line width=0.75pt] table {Figure_Data/pricing/Fd1.dat};
\addlegendentry{$d=0.20$};
\addplot[yellow!70!black, line width=0.75pt] table {Figure_Data/pricing/Fd2.dat};
\addlegendentry{$d=0.25$};
\addplot[green!70!black, line width=0.75pt] table {Figure_Data/pricing/Fd3.dat};
\addlegendentry{$d=0.30$};
\addplot[red!70!black, line width=0.75pt] table {Figure_Data/pricing/Fd4.dat};
\addlegendentry{$d=0.38$};
\end{axis}
\end{tikzpicture}
\end{center}
\caption{Tight lower bound $F(r)$ for $r\prob{B> r}$ with $\mu=0.5$ and $b=1$, which is evaluated for $d=0.20,\,0.25,\,0.30,\,0.38$. 
}\label{fig:plotsF}
\end{figure}
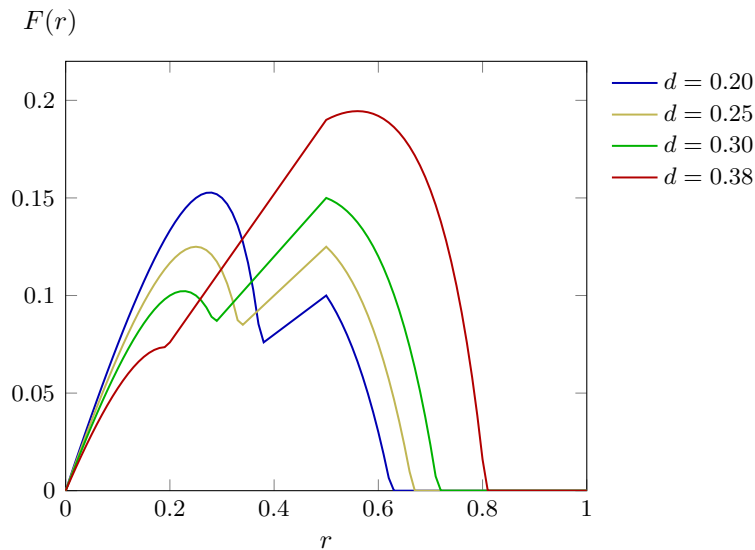

Upon maximizing this worst-case profit function, the next result shows that there exist three ranges of dispersion (measured in MAD), each with a different optimal price that attains the largest local maximum.

\begin{theorem}[Optimal price under mean-MAD-range ambiguity; \cite{jacco}]\label{mainmadthm}  Suppose the seller knows the mean $\mu$, the mean absolute deviation $d$ and the upper 
bound $b$ of the value distribution. For $b\in[\mu,5\mu]$, the solution to the monopoly pricing problem \eqref{mpp} is given by
\begin{align}\label{optimalrcase1}
r^L&=
\left\{\begin{array}{ll}
r_1=\mu-\sqrt{\frac{d\mu}{2}},&  {\rm if} \  d\in[0,d_1],\\
\mu,&  {\rm if} \  d\in[d_1,d_2],\\
r_2=b -\frac{\sqrt{b  (2 \mu -d) (2 \mu  (b -\mu )-b  d)}}{2 \mu -d},&  {\rm if} \  d\in[d_2,d_{\rm max}],
 \end{array}\right.
\end{align}
where
$$
d_1=\frac{2 b \mu  (b-\mu )-4 \sqrt{\mu ^3 (b-\mu )^3}}{(b-2 \mu )^2}, \quad d_2=
\frac{2 \left(b \mu -\mu ^2\right)}{2 b-\mu }, \quad d_{\rm max}= \frac{2(b-\mu)\mu}{b}.
$$
For $b>5\mu$ and $d\in[0,d_2]$ the optimal price is $r^L= r_1$.  For $b>5\mu$ and $d\in(d_2,d_{\rm max}]$ the optimal price $r^L$ is either $r_1$ or $r_2$. 
\end{theorem}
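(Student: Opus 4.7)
The plan is to reduce the maxmin problem to a one-dimensional optimization by invoking Theorem~\ref{theorem:inf} to replace the worst-case probability by its explicit piecewise form, and then to perform a standard calculus-based search for the maximum of the resulting profit function $F(r) = r\cdot\inf_{\mathbb{P}\in\mathcal{P}_{(\mu,b,d)}}\mathbb{P}(B>r)$. Since Theorem~\ref{theorem:inf} partitions $[0,b]$ into four intervals, the first step is to write $F$ as the corresponding piecewise expression: on $[0,\tau_1]$ we get $F(r)=r-rd/(2(\mu-r))$; on $[\tau_1,\mu]$ the lower bound is the constant $d/(2(b-\mu))$, so $F$ is linear increasing in $r$; on $[\mu,\tau_2]$ we have $F(r) = r(\mu-r)/(b-r)+dr^2/(2\mu(b-r))$; and $F\equiv0$ on $[\tau_2,b]$.

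The second step is to locate the candidate maximizers on each piece. Setting $F'(r)=0$ on $[0,\tau_1]$ gives the clean equation $(\mu-r)^2 = d\mu/2$, whose admissible root is exactly $r_1=\mu-\sqrt{d\mu/2}$. The linear piece on $[\tau_1,\mu]$ contributes the endpoint $r=\mu$ with value $F(\mu)=\mu d/(2(b-\mu))$. On $[\mu,\tau_2]$, clearing the denominator in $F'(r)=0$ yields the quadratic $(2\mu-d)r^2 - 2(2\mu-d)br + 2\mu^2 b = 0$, whose smaller root is $r_2$. A straightforward feasibility check shows that $r_2\geq\mu$ iff $d\geq d_2=2\mu(b-\mu)/(2b-\mu)$, so for $d<d_2$ the only candidate coming from the third piece is the endpoint $r=\mu$ (already captured). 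Thus, for any $d$ the global optimum is among $\{r_1,\mu,r_2\}$, and I need only compare their $F$-values.

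The comparison $F(r_1)$ versus $F(\mu)$ is the cleanest: substituting $r_1$ and writing $u=\sqrt{d/(2\mu)}$ gives $F(r_1)=\mu(1-u)^2$ and $F(\mu)=\mu u^2\cdot\mu/(b-\mu)$, so the equation $F(r_1)=F(\mu)$ reduces to $(1-u)/u = \sqrt{\mu/(b-\mu)}$; solving for $d$ and rationalizing produces exactly the stated $d_1$. A monotonicity argument in $d$ then shows $r_1$ wins on $[0,d_1]$ and $\mu$ wins on $[d_1,d_2]$. What remains is the comparison with $F(r_2)$ for $d\in[d_2,d_{\max}]$: here one plugs $r_2$ back into $F$ and compares with $\mu(1-u)^2$. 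This is the main obstacle, because $F(r_2)$ contains a nested square root in $b$, $\mu$ and $d$. The key algebraic observation to extract is that the sign of $F(r_2)-F(r_1)$, after squaring, factors through the quantity $b-5\mu$: for $b\in[\mu,5\mu]$ one checks that $F(r_2)\geq F(r_1)$ throughout $[d_2,d_{\max}]$ (with equality only at $d=d_2$, where $r_2=\mu$), yielding the clean three-case formula; for $b>5\mu$ the sign depends on $d$ and no closed-form threshold exists, so the theorem must merely assert that $r^L\in\{r_1,r_2\}$. The remaining step is to verify the feasibility bound $d_{\max}=2\mu(b-\mu)/b$, which follows from the elementary fact that the MAD of any distribution on $[0,b]$ with mean $\mu$ cannot exceed $2\mu(b-\mu)/b$, attained by the two-point distribution on $\{0,b\}$.
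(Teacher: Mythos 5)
Your proposal is correct and follows essentially the same route as the paper's proof in Appendix~\ref{EC:proofprice}: write $F(r)=r\,\inf_{\bP}\bP(B>r)$ piecewise via Theorem~\ref{theorem:inf}, extract the stationary points $r_1$ and $r_2$ of the two concave pieces together with the kink value $F(\mu)=\tfrac{d\mu}{2(b-\mu)}$, and locate the thresholds $d_1,d_2$ by equating values. Your substitution $u=\sqrt{d/(2\mu)}$ for the comparison $F(r_1)=F(\mu)$ does reproduce the stated $d_1$, and your derivation of $d_2$ from the feasibility condition $r_2\ge\mu$ is equivalent to the paper's remark that $d_2$ is where the right-derivative of $F_2$ at $\mu$ turns positive. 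The one place you make life harder than necessary is the step you yourself flag as ``the main obstacle'': the comparison of $F(r_2)$ with $F(r_1)$ on $[d_2,d_{\max}]$ via direct algebra with nested radicals. The paper sidesteps this entirely by noting that $d_1\le d_2$ precisely when $b\le 5\mu$; then for $d\in[d_2,d_{\max}]$ one has $d\ge d_2\ge d_1$, hence $F(r_1)\le F(\mu)$ by your own monotonicity argument, and since $F$ is increasing on $[\mu,r_2]$ whenever $d\ge d_2$, also $F(\mu)\le F(r_2)$ --- so $F(r_2)\ge F(r_1)$ by transitivity, with no radical manipulation. For $b>5\mu$ the ordering of $d_1$ and $d_2$ reverses, the chain breaks, and that is exactly why the theorem only asserts $r^L\in\{r_1,r_2\}$ there; so the dichotomy at $b=5\mu$ enters through the ordering of the two thresholds rather than through a factorization of $F(r_2)-F(r_1)$. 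I recommend replacing your direct algebraic comparison with this transitivity argument; otherwise you would still owe the reader a verification of the asserted sign pattern.
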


The proof of Theorem \ref{mainmadthm} is due to \cite{jacco} and also presented in 
Appendix~\ref{EC:proofprice}. 
Theorem \ref{mainmadthm} shows that the pricing function $r^L$ is not monotone in the dispersion (measured in MAD), and that dispersion and support both have a major influence on the pricing strategy. The theorem 
identifies the dispersion thresholds $d_1$ and $d_2$. As a function of dispersion, the price is smaller than $\mu$ and decreasing until $d_1$, then is $\mu$ until $d_2$, and then increases towards the maximal value $b$. Hence,  only when dispersion is high, the seller is willing to set a price close to the maximum price $b$. This is illustrated in Figure \ref{fig:optprice1}.  Theorem \ref{mainmadthm}  degenerates when the support exceeds 5 times $\mu$. In particular, when $b\to\infty$, the optimal price is $\mu-\sqrt{{d\mu}/{2}}$ for all $d\leq 2\mu$. This price is always lower than $\mu$. 


\begin{figure}[h]
\begin{center}
\begin{tikzpicture}
\begin{axis}[
	xlabel={$d$},
	xmin=0, xmax=0.5,
	ymin=0.2, ymax=1,
    ylabel={$r^L$},
    ylabel style={yshift=0.2cm},
    xticklabel style={
    	/pgf/number format/precision=4,
    	/pgf/number format/fixed,
	},
    scaled x ticks=false,
	yticklabel style={/pgf/number format/precision=4, /pgf/number format/fixed},
	scaled y ticks=false,
    label style={font=\small},
    ticklabel style={font=\footnotesize},
    y label style={at={(current axis.above origin)},rotate=-90,anchor=south,yshift=0.2cm}
]
\addplot[blue!70!black, line width=0.75pt] table {Figure_Data/pricing/rL1.dat};
\addplot[blue!70!black, line width=0.75pt] table {Figure_Data/pricing/rL2.dat};
\end{axis}
\end{tikzpicture}
\end{center}
\caption{The optimal price $r^L$ for $\mu=0.5$ and $b=1$, for which $d_1=1/4$ and $d_2=1/3$. 
}\label{fig:optprice1}

\end{figure}
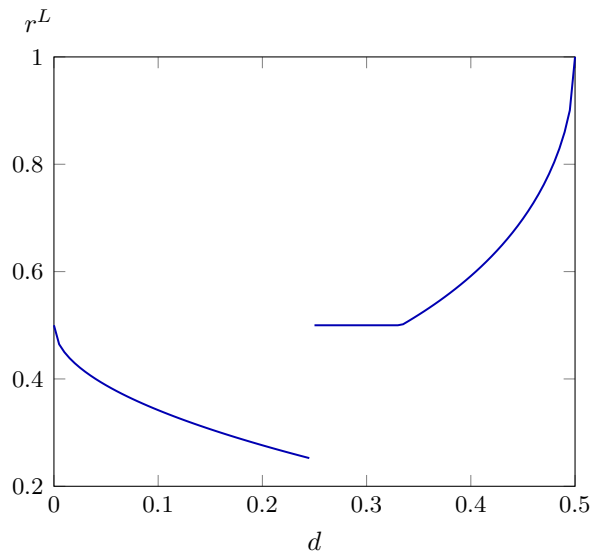

Theorem \ref{mainmadthm}  contributes to the active research field of robust pricing; see  \cite{carroll2019robustness} for a recent overview. We mention a few related works. 
 \cite{kos2015selling} show that when the seller only knows the mean valuation, the maxmin profit is always zero. This solidified intuition that in absence of an upper bound  arbitrarily high valuations cause overly pessimistic scenarios. 
Indeed, \cite{kos2015selling} show that when there is an upper bound, the seller can find a nontrivial maxmin price that is smaller than $\mu$ and generates positive expected profit.  The same holds true when the seller knows the variance (instead of upper bound); see \cite{carrasco2018optimal}. In both cases, the maxmin price  monotonically decreases with the allowed dispersion (either measured as upper bound or variance).  \cite{suzdaltsev2018distributionally} considers the case when the mean, variance and upper bound are all three known, and finds a maxmin price that is smaller than $\mu$ for low variance and greater than $\mu$ for high variance. 

The monopolistic pricing problem is connected to virtual valuations $v(r):=r-{\prob{B>r}}/{f_B(r)}$, with $f_B(r)$ the probability distribution function of $B$.
 These virtual valuations measure the surplus that can be extracted from agents, and can be used for optimal design of auctions with multiple buyers or object-types \citep{myerson1981optimal}. With a single buyer,  the optimal price is the solution to $v(r)=0$, which indeed is  $
{\rm arg\,max}_r \,r\,\prob{B> r}$. Hence, there are possibilities for deploying the robust Chebyshev bounds for other models in pricing and mechanism design, for instance distribution-free analysis of auctions with multiple independent bids \citep{suzdaltsev2018distributionally} or correlated bids 
\citep{che2019robust}.


\subsection{Stop-loss reinsurance} \label{sec: reinsurance}

Reinsurance is a classical topic in the actuarial sciences and insurance mathematics and implies that an insurance company transfers part of its risk to a reinsurance company; see e.g., \cite{asmussen2010ruin}, \cite{kaas2008modern}. 
Say an insurance company faces a total claim $S$ that is the sum of $n$ individual claims  $X_i,\,i=1,\dots,n$. The insurance company pays the claim up to a level $z$, and the reinsurance company covers the remainder. This gives rise to the so-called retention function $\psi(z,S)=\min\{S,z\}$ that represents the payment of the insurer. We provide an upper bound for the standard stop-loss retention function in Appendix~\ref{EC:retention}.

The payment function of the reinsurance company puts forward a more challenging problem when the insurance coverage is limited. In this case, a relevant performance characteristic is to what extend the insurance company benefits from the reinsurance contract. This benefit is measured with the function
\begin{equation}
    \phi(z,S)=\begin{cases}
\ m, \quad &{\rm if }\ S\geq z+m, \\
\ S-z, \quad &{\rm if }\ z\leq S\leq z+m, \\
\ 0, \quad &{\rm if }\ S\leq z. \\
\end{cases}
\end{equation}
When the total claim $S$ stays below the retention limit $z$, the insurance company covers the entire claim, but when $S$ exceeds $z$ the reinsurer pays the excess claim up to a maximum $m$. Thus, the reinsurance company does not compensate large claims that exceed the exit point $m+z$. Above this level the risk is retained by the insurance company. 

We obtain a novel bound by using primal-dual arguments.

\begin{theorem}\label{th:stoplossrein}
The expected insurer's benefit is bounded by
\begin{equation}
    \sup\limits_{\mathbb{P}\in\mathcal{P}_{(\mu,b,d)}}\mathbb{E}_{\mathbb{P}}[\phi(z,S)]=\begin{cases}
    \ \min\{m, \frac{m}{m+z}(\mu - \frac{d(b-(m+z))}{2(b-\mu)})\}, \quad & {\rm if}\ z\leq m+z\leq\mu,\\
    \ \min\{m(1-\frac{d}{2\mu}),z(\frac{d}{2\mu}-1)+\mu\}, \quad & {\rm if}\ z\leq\mu\leq z+m\leq b,\\
    \ \min\{m(1-\frac{d}{2\mu}),\frac{d m}{2(m+z-\mu)}\}, \quad & {\rm if}\ \mu\leq z\leq z+m\leq b,\\
    \end{cases}
\end{equation}
where the function $\phi(z,S)$ degenerates to $\max\{S-z,0\}$ if $z+m> b$. In this case, 
\begin{equation}
    \sup\limits_{\mathbb{P}\in\mathcal{P}_{(\mu,b,d)}}\mathbb{E}_{\mathbb{P}}[\phi(z,S)]=\begin{cases}
    \ z(\frac{d}{2\mu}-1)+\mu, & {\rm if}\ z\leq\mu,\\
    \ \frac{d(b-z)}{2(b-\mu)},\quad & {\rm if}\ z\geq\mu.\\
    \end{cases}
\end{equation}
\end{theorem}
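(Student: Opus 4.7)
The plan is to mirror the primal-dual strategy used in the proof of Theorem~\ref{theorem:main}, now replacing the indicator $\1\{s \geq t\}$ in the primal objective with the piecewise-linear payoff $\phi(z, s) = \min\{m, (s - z)_+\}$. This payoff has a convex kink at $s = z$ (slope $0 \to 1$) and a concave kink at $s = z + m$ (slope $1 \to 0$), while the dual majorant $F(s) = \lambda_0 + \lambda_1 s + \lambda_2 |s - \mu|$ still has a single kink at $s = \mu$. Under the same Slater condition on the moment vector, strong duality holds and Richter--Rogosinski guarantees an extremal primal distribution on at most three points, which by complementary slackness must lie where the optimal majorant $F^*$ touches $\phi$.

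The dual problem reads
\begin{equation*}
\inf_{\lambda_0, \lambda_1, \lambda_2}\;\lambda_0 + \lambda_1 \mu + \lambda_2 d \quad \text{s.t.} \quad F(s) \geq \phi(z, s),\ \forall s \in [0, b],
\end{equation*}
and I would split the analysis by the position of the kink of $F$ at $\mu$ relative to the two kinks of $\phi$, matching the three regimes in the statement: (i) $z + m \leq \mu$, (ii) $z \leq \mu \leq z + m$, and (iii) $\mu \leq z$. In each regime I enumerate the qualitatively distinct touching patterns of $F^*$ and $\phi$, analogous to scenarios 1a/1b/2a/2b in the proof of Theorem~\ref{theorem:main}, and solve the resulting small linear systems for $(\lambda_0^*, \lambda_1^*, \lambda_2^*)$. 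The two terms inside each $\min\{\cdot, \cdot\}$ in the statement correspond precisely to two such patterns, and the stated bound is the tighter, hence the minimum, of the two dual values.

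The second half of the argument exhibits primal feasible distributions matching the dual values, which is where weak duality closes the gap. Specifically, in regime (i) the bound $\tfrac{m}{m+z}\bigl(\mu - \tfrac{d(b-(m+z))}{2(b-\mu)}\bigr)$ is attained by the three-point distribution on $\{0, z + m, b\}$ whose probabilities follow from scenario 1a with $t$ replaced by $z + m$ (the competing dual value $m$ is realised by any distribution supported on $[z+m, b]$). In regime (ii) the bound $\mu + z(\tfrac{d}{2\mu} - 1)$ is attained by the three-point distribution on $\{0, \mu, z + m\}$, while $m(1 - \tfrac{d}{2\mu})$ is attained by the two-point distribution on $\{0,\, 2\mu^2/(2\mu - d)\}$ whenever its positive atom lies at or above $z + m$. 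In regime (iii) the bound $\tfrac{dm}{2(m+z-\mu)}$ is attained by the same support $\{0, \mu, z+m\}$ (which now has zero payoff at both $0$ and $\mu$ because $\mu \leq z$). Finally, the degenerate case $z + m > b$ reduces $\phi$ to the convex function $(s-z)_+$, for which the scenario 2b distribution on $\{0, \mu, b\}$ from Theorem~\ref{theorem:main} delivers both branches after direct substitution into $\bE_{\bP}[(S-z)_+]$.

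The main obstacle is the enumeration in regime (ii): there $F$ must simultaneously majorize a zero segment on $[0, z]$, an ascending unit-slope segment on $[z, z + m]$ that contains the kink of $F$ at $\mu$, and the plateau of height $m$ on $[z+m, b]$. Several candidate configurations must be ruled out — for instance any $F^*$ that coincides with $\phi$ along an entire sub-interval of $[z, z+m]$ — before the two surviving configurations are shown to produce exactly the two terms inside the $\min$. Once regime (ii) is settled, regimes (i) and (iii) are essentially one-sided analogues, and each primal verification reduces to solving a $3\times 3$ linear system for the atom probabilities with the prescribed mean and MAD, parallel to the computations already carried out in the proof of Theorem~\ref{theorem:main}.
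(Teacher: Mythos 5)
Your proposal follows essentially the same route as the paper's proof in Appendix~\ref{app: stoplossthms}: the same dual majorization of $\phi(z,\cdot)$ by $\lambda_0+\lambda_1 s+\lambda_2|s-\mu|$, the same three regimes determined by the position of $\mu$ relative to $z$ and $z+m$ with two touching patterns each whose dual values give the two terms in each $\min$, and matching primal witnesses closed by weak duality. The only differences are cosmetic choices of primal witness (e.g.\ your two-point distribution on $\{0,\,2\mu^2/(2\mu-d)\}$ for the plateau pattern versus the paper's three-point support $\{0,m+z,b\}$, and direct substitution into the convex-extremal three-point distribution for the degenerate case where the paper cites the Ben-Tal--Hochman bound), all of which are valid.
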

\begin{proof}
See Appendix~\ref{app: stoplossthms}. 
\end{proof}

An illustration of the bounds for the stop-loss payments is provided in Figure~\ref{fig:paymentceding}, where we display payments as functions of $z$ with $\mu=5$, $d=1.77$, and $m=3$, $m=5$ and $m\to\infty$. We assume that the `true' total claim $S$ follows a $\text{Poisson}(5)$ distribution. Note the resemblance of the stop-loss bounds to the mean-MAD tail probability bounds in Section~\ref{sec:novelbounds}. The former bounds, however, have an additional linear part with a negative slope for $z\leq\mu\leq z+m$. This linear segment is only present when $m$ exceeds  $d\mu/(2\mu-d)$; moreover, the bound approaches a linear function for $z\leq\mu$ when $m$ is chosen sufficiently large. Additionally, letting $m\to\infty$, our example results in a bound equal to the constant $d/2$ if $z\geq\mu$, and thus the bound for the stop-loss payment of the reinsurer degenerates to a piecewise linear function consisting of two parts (a linear part with negative slope $d/(2\mu)-1$ and a constant part equal to $d/2$).

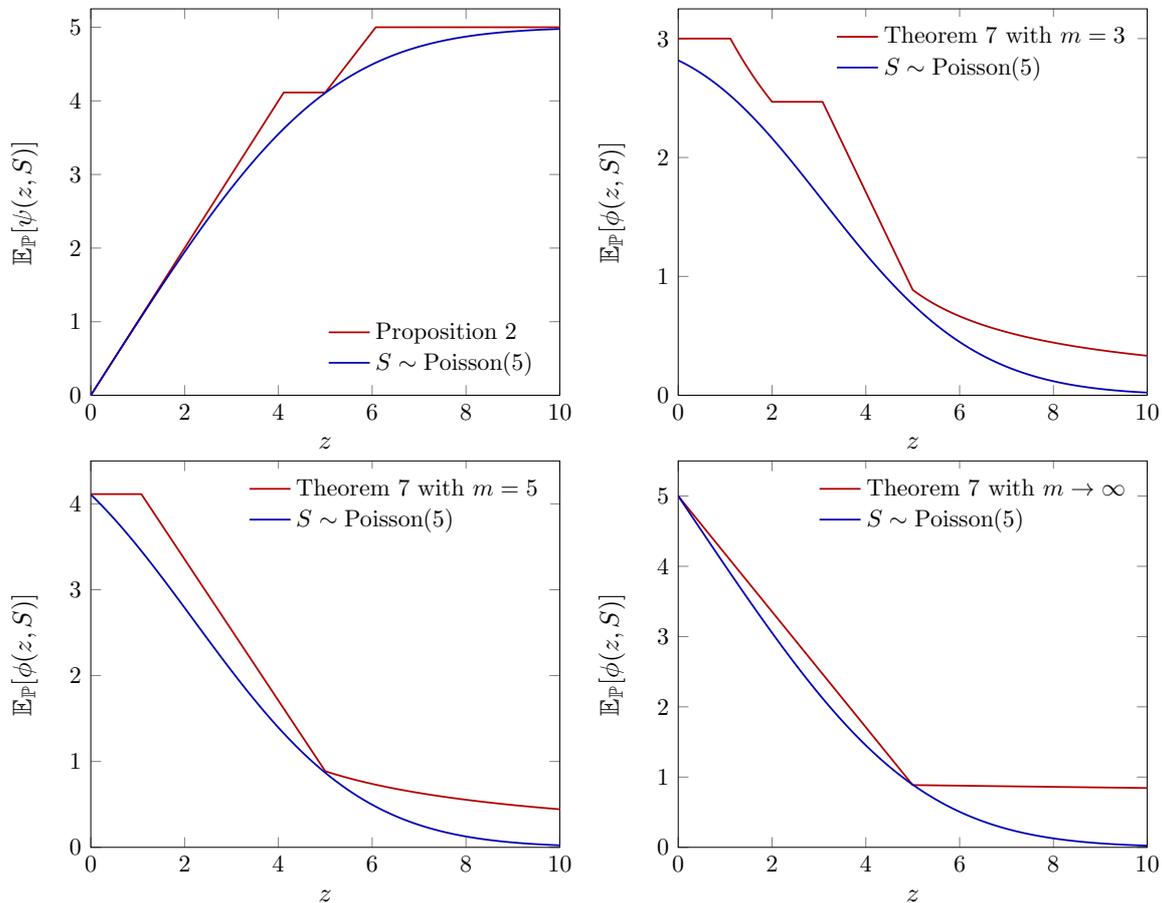
\begin{figure}[h!]
\begin{center}

\begin{tikzpicture}[yscale=0.9, xscale=0.9] 
\begin{axis}[
	xlabel={$z$},
	xmin=0, xmax=10,
	ymin=0, ymax=5.25,
    ylabel={$\mathbb{E}_{\mathbb{P}}[\psi(z,S)]$},
    ylabel style={yshift=0.2cm},
    ticklabel style={font=\small},
    xticklabel style={
    	/pgf/number format/precision=4,
    	/pgf/number format/fixed,
	},
    scaled x ticks=false,
    legend pos = south east,
    legend style={font=\small},
    legend style={draw=none},
    legend cell align={left}
]

\addplot[red!70!black, line width=0.75pt] table {Figure_Data/stoploss/stop2.dat};
\addlegendentry{Proposition~\ref{prop: stoplossceding}};
\addplot[blue!70!black, line width=0.75pt] table {Figure_Data/stoploss/stop1.dat};
\addlegendentry{$S\sim\text{Poisson}(5)$};
\end{axis}
\end{tikzpicture}%
~%
\begin{tikzpicture}[yscale=0.9, xscale=0.9] 
\begin{axis}[%
	xlabel={$z$},
	xmin=0, xmax=10,
	ymin=0, ymax=3.25,
    ylabel={$\mathbb{E}_{\mathbb{P}}[\phi(z,S)]$},
    ylabel style={yshift=0.2cm},
    ticklabel style={font=\small},
    xticklabel style={
    	/pgf/number format/precision=4,
    	/pgf/number format/fixed,
	},
    scaled x ticks=false,
    legend style={font=\small},
    legend style={draw=none},
    legend cell align={left}
]

\addplot[red!70!black, line width=0.75pt] table {Figure_Data/stoploss/stop4.dat};
\addlegendentry{Theorem~\ref{th:stoplossrein} with $m=3$};
\addplot[blue!70!black, line width=0.75pt] table {Figure_Data/stoploss/stop3.dat};
\addlegendentry{$S\sim\text{Poisson}(5)$};
\end{axis}
\end{tikzpicture}%

\begin{tikzpicture}[yscale=0.9, xscale=0.9] 
\begin{axis}[
	xlabel={$z$},
	xmin=0, xmax=10,
	ymin=0, ymax=4.5,
    ylabel={$\mathbb{E}_{\mathbb{P}}[\phi(z,S)]$},
    ylabel style={yshift=0.2cm},
    ticklabel style={font=\small},
    xticklabel style={
    	/pgf/number format/precision=4,
    	/pgf/number format/fixed,
	},
    scaled x ticks=false,
    legend style={font=\small},
    legend style={draw=none},
    legend cell align={left}
]

\addplot[red!70!black, line width=0.75pt] table {Figure_Data/stoploss/stop6.dat};
\addlegendentry{Theorem~\ref{th:stoplossrein} with $m=5$};
\addplot[blue!70!black, line width=0.75pt] table {Figure_Data/stoploss/stop5.dat};
\addlegendentry{$S\sim\text{Poisson}(5)$};
\end{axis}
\end{tikzpicture}%
~%
\begin{tikzpicture}[yscale=0.9, xscale=0.9] 
\begin{axis}[
	xlabel={$z$},
	xmin=0, xmax=10,
	ymin=0, ymax=5.5,
    ylabel={$\mathbb{E}_{\mathbb{P}}[\phi(z,S)]$},
    ylabel style={yshift=0.2cm},
    ticklabel style={font=\small},
    xticklabel style={
    	/pgf/number format/precision=4,
    	/pgf/number format/fixed,
	},
    scaled x ticks=false,
    legend style={font=\small},
    legend style={draw=none},
    legend cell align={left}
]

\addplot[red!70!black, line width=0.75pt] table {Figure_Data/stoploss/stop8.dat};
\addlegendentry{Theorem~\ref{th:stoplossrein} with $m\rightarrow\infty$};
\addplot[blue!70!black, line width=0.75pt] table {Figure_Data/stoploss/stop7.dat};
\addlegendentry{$S\sim\text{Poisson}(5)$};
\end{axis}
\end{tikzpicture}%

\end{center}
    \caption{The bounds and `true' values of the expected claim payment of the insurance company $\mathbb{E}_{\mathbb{P}}[\psi(z,S)]$ and the reinsurer $\mathbb{E}_{\mathbb{P}}[\phi(z,S)]$ as functions of the retention limit $z$ with $\mu=5$, $d=1.77$, and $m=3$, $m=5$ or $m\to\infty$. The red lines depict the upper bounds and the blue lines give the true expected claim payments when $S\sim\text{Poisson}(5)$.}
    \label{fig:paymentceding}
\end{figure}

These results complement the literature on tight bounds for expected claim payments. \cite{cox1991bounds}, considering bounded support and known first and second moment, obtains tight bounds using general results for moment problems. Other related works explore ways to sharpen the bounds using additional information. When modifying the ambiguity set by incorporating skewness information, imposing unimodality and symmetry conditions, or using higher order moments, the gap between the upper and lower bounds narrows significantly; see \cite{heijnen1990best}, \cite{devylder1982analytical}, and \cite{jansen1986upper}. Note that the mean-MAD information can easily be extended with skewness parameters, such as the probability $\beta=\prob{S\geq\mu}$ or the median. In our case it is also possible to impose unimodality and symmetry conditions by altering the dual problem. Section~4 of \cite{popescu2005semidefinite} discusses these modifications for general piecewise polynomial functions in the constraints of the dual problem. We discuss one such extension in the next section: the multivariate stop-loss reinsurance problem.

\section{More applications for sums and optimization}
\label{sec: applications2}

As alluded to in the introduction, the novel tail bounds are part of a much larger research effort within the area of distributionally robust optimization (DRO), trying to exploit the tractability that comes with mean-MAD ambiguity constraints. To show this connection with DRO, we first extend our tail probability bound to sums of random variables (that arise often in DRO applications) in Section \ref{sec: sums} and illustrate the effectiveness with an insurance example in Section~\ref{sec: insuranceexample}. Section \ref{sec: acc} then discusses the application of tail probability bounds to reformulate ambiguous chance constraints in DRO.
In Section \ref{sec: radiotherapy} we provide a realistic DRO example that arises in radiotherapy optimization.

\subsection{Sums of random variables}\label{sec: sums}

Widely used in probability theory and stochastic OR, sums of random variables find application in areas such as inventory management, service operations management, mathematical finance and credit risk. Mathematical techniques for sums or random variables are covered in many standard texts on probability theory  \citep{chung2001course,feller1971}. For sums of i.i.d.~random variables, variance then enters naturally (e.g.,~variance of the sum, central limit theorem), also for deriving tail bounds. The MAD of i.i.d. variables, on the other hand, cannot simply be summed. Leveraging the tight univariate tail bound, we establish a generic multivariate tail bound for sums of random variables. We do so without a specific application in mind, but with the goal of deriving broadly applicable distribution-free bounds.


Consider \(n\) random variables \(X_1, \ldots, X_n\) with known support, mean and MAD, and consider the worst-case tail probability
\begin{equation} \label{eq: joint tail prob}
\sup_{\bP \in \cF} \bP \Big( \sum_{i=1}^n X_i \geq t \Big) 
\end{equation}
with \(\cF\) is the multi-dimensional ambiguity set, i.e., 
\begin{equation} \label{eq: joint ambig}
\cF = \left\{ \bP : \mathcal{B}^n \rightarrow [0, 1] \relmiddle| \bP\left(X_i \in \left[0, b_i\right]\right) = 1, \: \bE_\bP \left[ X_i \right] = \mu_i, \: \bE_\bP \left[\left| X_i - \mu_i \right| \right] = d_i, \: i=1,\ldots,n \right\},
\end{equation}
and \(\mathcal{B}^n\) the \(n\)-dimensional Borel \(\sigma\)-algebra.
Note that we do not make any assumptions with regard to (in)dependence or correlation between the random variables. To analyze \eqref{eq: joint tail prob} we define a new random variable \(Y = \sum_{i=1}^n X_i\). Clearly, the support and mean of \(Y\) follow from \eqref{eq: joint ambig}:
\[\bP\Big(Y \in \Big[0, \; \sum_{i=1}^n b_i \Big] \Big) = 1, \qquad \qquad \bE_\bP \left[Y \right] = \sum_{i=1}^n \mu_i.\]
Unfortunately, the mean absolute deviation is unknown and applying Theorem~\ref{theorem:main} is thus not straightforward. To ease notation, we shall denote the sums of \(\mu_i\), \(b_i\) and \(d_i\), by \(\bar{b}\), \(\bar{\mu}\) and \(\bar{d}\), respectively. Theorem~\ref{thm: aggregate multi dim} presents a bound on \eqref{eq: joint tail prob} for any upper bound on the mean absolute deviation of \(Y\). 
\begin{theorem} \label{thm: aggregate multi dim}
Assume
\[\bE\Big[\Big| \sum_{i=1}^n X_i - \bar{\mu} \Big|\Big] \leq \hat{d}.\]
Then, 
\begin{equation} \label{eq: final expr joint tail}
 \sup_{\bP \in \cF} \bP \Big( \sum_{i=1}^n X_i \geq t \Big) \leq \begin{cases}   1, & {\rm if }\ t \leq \bar{\mu}, \\ \min \Big\{ \frac{\hat{d}}{2\left(t - \bar{\mu}\right)}, \; \frac{\bar{\mu}}{t} \Big\}, & {\rm if }\ t \in (\bar{\mu}, \bar{b}]. \end{cases}
 \end{equation}
\end{theorem}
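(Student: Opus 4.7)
The plan is to reduce the multivariate problem to a univariate tail bound on the aggregate $Y=\sum_{i=1}^n X_i$. Regardless of the joint law $\bP\in\cF$, the marginal of $Y$ is supported in $[0,\bar b]$, has mean $\bar\mu$, and by assumption has MAD at most $\hat d$. Hence $\sup_{\bP\in\cF}\bP(Y\ge t)$ is bounded by the sup over all one-dimensional distributions with these three properties. The case $t\le\bar\mu$ then gives the trivial bound $1$, and all work concentrates on $t\in(\bar\mu,\bar b]$.

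For $t>\bar\mu$ I would establish the two component inequalities separately and then take the minimum. Markov's inequality applied to the nonnegative variable $Y$ immediately yields $\bP(Y\ge t)\le\bar\mu/t$. For the sharper MAD-based term I would exploit the elementary identity $\bE[(Y-\bar\mu)^+]=\tfrac{1}{2}\bE[|Y-\bar\mu|]$, which is a direct consequence of $\bE[(Y-\bar\mu)^+]-\bE[(\bar\mu-Y)^+]=\bE[Y-\bar\mu]=0$. This gives $\bE[(Y-\bar\mu)^+]\le\hat d/2$, and applying Markov to $(Y-\bar\mu)^+$ at level $t-\bar\mu>0$ yields $\bP(Y\ge t)=\bP((Y-\bar\mu)^+\ge t-\bar\mu)\le \hat d/(2(t-\bar\mu))$. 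Taking the minimum of the two inequalities produces precisely the right-hand side of \eqref{eq: final expr joint tail}.

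As a cross-check, the same conclusion can be reached by invoking Theorem \ref{theorem:main} directly on $Y$ for each candidate $d\in[0,\hat d]$ and then taking the sup over $d$. For $t>\bar\mu$, the Theorem \ref{theorem:main} bound is piecewise in $d$: increasing on $[0,d^*]$ where it equals $d/(2(t-\bar\mu))$, and decreasing on $[d^*,\infty)$ where it equals $1-d/(2\bar\mu)$, with break point $d^*=2\bar\mu(t-\bar\mu)/t$ and common value $\bar\mu/t$ at that point (Markov's bound). Supremizing over $d\in[0,\hat d]$ therefore gives $\hat d/(2(t-\bar\mu))$ when $\hat d\le d^*$ and $\bar\mu/t$ when $\hat d\ge d^*$, i.e., the claimed $\min$.

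The main conceptual obstacle, and the reason that a naive substitution $d=\hat d$ into Theorem \ref{theorem:main} would be incorrect, is exactly the non-monotonicity of that bound in the MAD highlighted in the discussion following Theorem \ref{theorem:main}: for large $d$ enough mass is forced into the tails of the support that tail probabilities at moderate $t$ actually decrease. This forces one either to supremize over $d\in[0,\hat d]$ or to use the two-Markov route above. A complementary feature worth noting is that the argument never touches the joint law nor any dependence structure among $X_1,\ldots,X_n$; only marginal information about $Y$ is used, so the bound holds uniformly over arbitrary copulas in $\cF$, which matches the dependence-free emphasis in the paragraph preceding the theorem.
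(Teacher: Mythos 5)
Your proof is correct, and your primary argument takes a genuinely different and more elementary route than the paper's. The paper relaxes to the univariate ambiguity set $\cG$ for $Y=\sum_i X_i$ with MAD constrained only by the inequality $\bE[|Y-\bar\mu|]\leq\hat d$, invokes Theorem~\ref{theorem:main} for each fixed $d\in[0,\hat d]$, and then explicitly maximizes $\min\{d/(2(t-\bar\mu)),\,1-d/(2\bar\mu)\}$ over $d$, locating the crossover $d^*=2\bar\mu(t-\bar\mu)/t$ --- exactly your ``cross-check'' paragraph. Your main route instead derives both terms of the minimum from scratch: Markov on the nonnegative variable $Y$ gives $\bar\mu/t$, and Markov on $(Y-\bar\mu)^+$ combined with the identity $\bE[(Y-\bar\mu)^+]=\tfrac12\bE[|Y-\bar\mu|]\leq\hat d/2$ gives $\hat d/(2(t-\bar\mu))$. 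This is self-contained, avoids any appeal to the machinery of Theorem~\ref{theorem:main}, and makes transparent that only marginal information about $Y$ is used, so dependence among the $X_i$ is irrelevant. What the paper's optimization-over-$d$ route buys in exchange is the additional information that the stated expression is not merely an upper bound but equals $\sup_{\bP\in\cG}\bP(Y\geq t)$, i.e., it is attained within the relaxed aggregate ambiguity set; your double-Markov argument establishes the inequality the theorem asserts but does not by itself exhibit attainment. You also correctly identify the one conceptual trap --- the non-monotonicity of the Theorem~\ref{theorem:main} bound in $d$, which rules out naively substituting $d=\hat d$ --- so there is no gap.
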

\begin{proof}
We consider the following ambiguity set for the distribution of \(Y\):
\begin{equation} \label{eq: aggregated ambig}
\cG = \{ \bP \mid \bP\left(Y \in \left[ 0, \; \bar{b} \right] \right) = 1, \enskip \bE_\bP \left[Y \right] = \bar{\mu}, \enskip \bE\left[ \left| Y - \bar{\mu} \right| \right] \leq \hat{d} \}.
\end{equation}
It should be noted that \eqref{eq: aggregated ambig} is not an ambiguity set of the form \eqref{eq: mean mad ambiguity}, because of the inequality for the mean absolute deviation. This also means that the tightest upper bound this approach can obtain for \(t \in \left[0, \bar{\mu}\right]\) is 1, as the distribution with probability mass 1 on \(\bar{\mu}\) is an element of \(\cG\). For \(t \in \left(\bar{\mu}, \bar{b}\right]\), we can however infer from Theorem \ref{theorem:main} that
\[\sup_{\bP \in \cF} \bP \Big(\sum_{i=1}^n X_i \geq t \Big) \leq \sup_{\bP \in \cG} \bP \left( Y \geq t\right) \hspace{6.28cm} \]
\[\hspace{2.7cm} = \begin{cases} 1, & \text{ if } t \leq \bar{\mu}, \\ \max_{d \in \left[0, \hat{d}\right]} \Big\{ \min \Big\{\frac{d}{2\left(t - \bar{\mu}\right)}, \: 1 - \frac{d}{2\bar{\mu} } \Big\} \Big\}, & \text{ if } t \in (\bar{\mu}, \bar{b}].  \end{cases}\]
We will now simplify this expression by solving the maximization problem over \(d\) explicitly. To that end, we first note that the minimum is taken over two linear functions of \(d\), an increasing and a decreasing one. Therefore, the global maximum is at the intersection of these functions, and the optimal \(d\) is thus either \(\hat{d}\) or \(d^*\), where \(d^*\) is such that
\[\frac{d^*}{2\left(t - \bar{\mu}\right)} = 1 - \frac{d^*}{2 \bar{\mu} }.\]
Solving this equation yields
\[d^* = \frac{2 \bar{\mu} \left(t - \bar{\mu}\right)}{ t }.\]
We remark that \(\hat{d}\) is the optimal solution when
\[\frac{\hat{d}}{2\left(t - \bar{\mu}\right)} < 1 - \frac{\hat{d}}{2\bar{\mu} },\]
as this means that \(\hat{d} < d^*\). Therefore, we find that
\[\max_{d \in \left[0, \hat{d}\right]} \Big\{ \min \Big\{\frac{d}{2\left(t - \bar{\mu}\right)}, \: 1 - \frac{d}{2 \bar{\mu} } \Big\} \Big\} = \min \Big\{ \frac{\hat{d}}{2\left(t - \bar{\mu}\right)}, \; \frac{d^*}{2\left(t - \bar{\mu}\right)} \Big\} = \min \Big\{ \frac{\hat{d}}{2\left(t - \bar{\mu}\right)}, \; \frac{\bar{\mu} }{\tau } \Big\}. \]
\end{proof}

The most obvious candidate for \(\hat{d}\) is given by the sum of all mean absolute deviations \(\bar{d}\), which is clearly an upper bound as \citep{postek2018robust}:
\[\bE\Big[\Big| \sum_{i=1}^n X_i - \bar{\mu} \Big|\Big] = \bE \Big[ \Big| \sum_{i=1}^n X_i - \mu_i \Big| \Big] \leq \sum_{i=1}^n \bE \left[\left|X_i - \mu_i \right| \right] = \bar{d}.\]
This bound, however, is generally not tight. It is tight, for example, when \(\mu_i\), \(b_i\) and \(d_i\) are equal for all \(i = 1, \ldots, n\). We will use this bound in the remainder of this section. Several other possible bounds that can be used are described by~\cite{postek2018robust}.


The allowed correlation structure is convenient in many situations. Take for instance the portfolio loss in credit risk, traditionally modeled as
			${L}_{n} =\sum_{i=1}^n X_i$, 				
with $X_1,\ldots, X_n$ the losses (due to default) of the individual obligors \citep{glasserman2005importance}. 
The standard scenario in credit risk is that losses are positively correlated, allowing $ L_n$ to assume relatively large values, which can be measured in terms of
 {Value-at-Risk}, the $\alpha$ quantile of the loss distribution, i.e.~$
{\rm VaR}_\alpha := \inf\{ t \ge 0 : \mathbb{P}[L_n \le t] \ge \alpha \}
$. The multivariate tail bound can be translated directly into bounds for {Value-at-Risk}. 


\subsection{Insurance portfolio example}\label{sec: insuranceexample}

Consider an insurer that holds a portfolio that can incur random losses $X_1,...,X_n$, which correspond to different types of insurance claims. The insurer considers the cumulative value of the claims $X_1,...,X_n$ and the probability that this value exceeds the available capital $t$; that is, the insurer is interested in the ruin probability, which is given by
\begin{equation}
    \mathbb{P}(X_1+\dots+X_n \geq t),
\end{equation}
where the distribution $\bP$ lies in $\cF$. Similar to the portfolio loss in credit risk, the allowed dependence structure proves useful when considering insurance problems with catastrophic risks.

Figure~\ref{fig: multi-dim example} demonstrates the multivariate bound with an experiment inspired by \citet{vanParys2016frechet}, Section~6. Suppose that an insurance company sells three insurance policies. The claims are modeled by random variables $X_i,\ i=1,2,3$. The insurance company only has information about their support, mean, and MAD. Assume that the losses are lognormally distributed with location parameter $\bar{m}_i$ and scale parameter $v_i$. The parameters have the following values: $\bar{m}_1=-0.3,\ v_1=0.8, \ \bar{m}_2=0.4,\ v_2=0.5, \ \bar{m}_3=0.8,$ and $v_3=0.5$. The insurer has reliably estimated the mean and MAD of these distributions. Using bounds for sums of random variables with mean-MAD information, we can provide a conservative bound for the probability of the event that the total claim exceeds $t$. In Figure~\ref{fig: multi-dim example} we show the mean-MAD bound and the actual ruin probabilities for two different dependence structures, i.e., the independence and comonotonic copula which couple the $X_i,\ i=1,2,3$. Sums of random variables are known to be the `riskiest' when they are comonotonic; that is, the terms grow simultaneously and hence a component can in no way hedge another one. The joint cumulative distribution of $X_1,\dots,X_n$ attains the so-called Fr\'echet-Hoeffding upper bound \citep{kaas2008modern}:
$$
\prob{X_1\leq x_1,\dots,X_n\leq x_n}=\min\limits_{i=1,\dots,n}\prob{Y_i\leq x_i},
$$
where $Y_i\sim X_i,\,i=1,\dots,n,$ which entails that these are, in a certain sense, the most related variables. Notice that our novel conservative tail bound holds for all dependence structures, and it also covers heavy-tailed distributions with finite MAD. This is shown in Figure~\ref{fig: multi-dim claim example}. Estimating dependence between insurance claims is often practically infeasible with only a limited amount of data; see \citet{mcneil2015quantitative} for a comprehensive discussion on this topic. It is doable, however, to estimate the mean and MAD accurately even when data is scarce.

\begin{figure}[h!]
\begin{center}
\begin{tikzpicture}
\begin{axis}[
	xlabel={\(t\)},
	xmin=5.5, xmax=15,
	ymin=0, ymax=1,
    ylabel={\(\mathbb{P}\left(X_1 + X_2+X_3 \geq t\right)\)},
    ylabel style={yshift=0.2cm},
    label style={font=\small},
    ticklabel style={font=\small},
    xticklabel style={
    	/pgf/number format/precision=4,
    	/pgf/number format/fixed,
	},
    scaled x ticks=false,
    legend style={draw=none, font=\small},
    legend cell align={left}
]
\addplot[green!70!black, line width=0.75pt] table {Figure_Data/multiDim/dim1.dat};
\addlegendentry{independent};
\addplot[red!70!black, line width=0.75pt] table {Figure_Data/multiDim/dim2.dat};
\addlegendentry{comonotonic};
\addplot[blue!70!black, line width=0.75pt] table {Figure_Data/multiDim/dim3.dat};
\addlegendentry{Theorem~\ref{thm: aggregate multi dim}};
\end{axis}
\end{tikzpicture}
\end{center}
\caption{An example of the multi-dimensional bound for a risk aggregation application. The red and green line are obtained through simulation. \label{fig: multi-dim example}}
\end{figure}
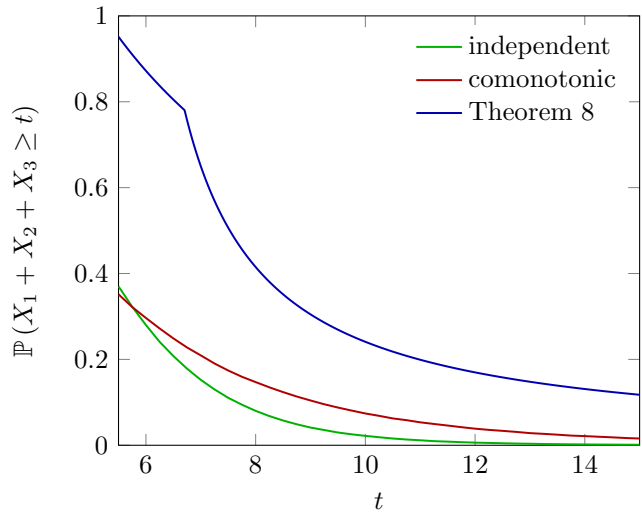


In the previous section we assumed that the distributional parameters of $S$ are known. In practice, however, the insurer relies on claim data of individual contracts. The total loss often consists of several different types of insurance contracts, e.g., the three claim types mentioned in the example above. Assume next that we have exact knowledge of the mean $\mu_i$, MAD $d_i$, and support of each of these losses $X_1,\ldots,X_n$. We are now interested in evaluating
\begin{equation}\label{eq: joint claim}
    \sup\limits_{\mathbb{P}\in\mathcal{F}}\bE_{\bP}\Big[\phi\Big(z,\sum_{i=1}^n X_i\Big)\Big].
\end{equation}


Note that we again do not make any assumptions with respect to the dependence structure of the random variables. To evaluate \eqref{eq: joint claim} we sum them all together: \(S_n \coloneqq \sum_{i=1}^n X_i\). The support and mean of the aggregate risk \(S_n\) are given by
\[\bP\Big(S_n \in \Big[0, \; \sum_{i=1}^n b_i \Big] \Big) = 1, \qquad \qquad \bE_\bP [S_n ] = \sum_{i=1}^n \mu_i.\]
The mean absolute deviation is again unknown, and hence we follow an approach similar to the derivation of the multi-dimensional tail probability bound. We also adopt the same notation. Theorem~\ref{thm: aggregateclaim} presents an upper bound of \eqref{eq: joint claim} for any upper bound on the mean absolute deviation of \(S_n\).

\begin{theorem} \label{thm: aggregateclaim}
For any \(\hat{d}\) such that
\[\bE\Big[\Big| \sum_{i=1}^n X_i - \bar{\mu} \Big|\Big] \leq \hat{d},\]
it holds that
\begin{equation} \label{eq: final expr joint claim}
 \sup\limits_{\mathbb{P}\in\mathcal{F}}\bE_{\bP}[\phi(z,S_n)] \leq \begin{cases}
    \ m, \quad & {\rm if}\ z\leq m+z\leq\bar{\mu},\\
    \ \min\{\frac{m\bar{\mu}}{m+z},z(\frac{\hat{d}}{2\bar{\mu}}-1)+\bar{\mu}\}, \quad & {\rm if}\ z\leq\bar{\mu}\leq z+m\leq \bar{b},\\
    \ \min\{\frac{m\bar{\mu}}{m+z},\frac{\hat{d} m}{2(m+z-\bar{\mu})}\}, \quad & {\rm if}\ \bar{\mu}\leq z\leq z+m\leq \bar{b}.\\
    \end{cases}
 \end{equation}
\end{theorem}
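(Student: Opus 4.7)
The plan is to mimic the proof of Theorem~\ref{thm: aggregate multi dim}, reducing the multi-dimensional problem to a one-dimensional bound by aggregating $S_n = \sum_{i=1}^n X_i$ and then applying Theorem~\ref{th:stoplossrein}. The main new ingredient is that the aggregated distribution satisfies a MAD \emph{inequality} rather than a MAD equality, so after invoking Theorem~\ref{th:stoplossrein} we must maximize the resulting expression over all admissible MAD values $d \in [0,\hat{d}]$.

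First I would observe that under any $\bP \in \cF$, the law of $S_n$ has support $[0,\bar{b}]$, mean $\bar{\mu}$, and, by the triangle inequality already noted in the text, $\bE|S_n - \bar{\mu}| \leq \bar{d} \leq \hat{d}$. Thus every realized marginal of $S_n$ belongs to the one-dimensional ambiguity set
\begin{equation*}
    \cG = \bigl\{ \bP : \bP(S \in [0,\bar{b}])=1,\ \bE[S] = \bar{\mu},\ \bE|S - \bar{\mu}| \leq \hat{d}\bigr\} = \bigcup_{d \in [0,\hat{d}]} \cP_{(\bar{\mu},\bar{b},d)},
\end{equation*}
so that $\sup_{\bP \in \cF} \bE_\bP[\phi(z,S_n)] \leq \sup_{\bP \in \cG} \bE_\bP[\phi(z,S)] = \max_{d \in [0,\hat{d}]} \sup_{\bP \in \cP_{(\bar{\mu},\bar{b},d)}} \bE_\bP[\phi(z,S)]$, and the inner supremum is handled by Theorem~\ref{th:stoplossrein} with $(\mu,b,d)$ replaced by $(\bar{\mu},\bar{b},d)$.

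Next I would treat the three support cases separately, exploiting monotonicity in $d$. In Case 1 ($z+m \leq \bar{\mu}$) the expression from Theorem~\ref{th:stoplossrein} reads $\min\{m, \tfrac{m}{m+z}(\bar{\mu} - \tfrac{d(\bar{b}-(m+z))}{2(\bar{b}-\bar{\mu})})\}$, which is monotone non-increasing in $d$; at $d=0$ it equals $m$ since $\bar{\mu}/(m+z) \geq 1$, so the max over $d$ is simply $m$. In Cases 2 and 3, the bound is the minimum of a term strictly decreasing in $d$ and a term strictly increasing in $d$ (from 0 at $d=0$), so the max is attained either at $d = \hat{d}$ (giving the non-trivial expression in the theorem) or at the crossing point of the two linear functions of $d$. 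A direct calculation shows that in both cases the crossing occurs at $d^* = 2\bar{\mu}(m+z-\bar{\mu})/(m+z)$, where the common value equals $m\bar{\mu}/(m+z)$. Writing the max as the minimum of the $d=\hat{d}$ expression and this crossing value produces exactly the formulas claimed.

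The main obstacle is bookkeeping: verifying the algebraic identity that the two terms in Cases 2 and 3 cross at the same $d^*$ and attain the same value $m\bar{\mu}/(m+z)$ there, and checking that the resulting ``$\min$ of two quantities'' is the correct way to encode the two regimes $\hat{d} \lessgtr d^*$. Once this is done, the boundary cases $m+z = \bar{\mu}$ and $z = \bar{\mu}$ match continuously between the three pieces, so the final formula \eqref{eq: final expr joint claim} follows by collecting the three cases.
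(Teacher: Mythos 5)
Your proposal is correct and follows essentially the same route as the paper: aggregate into $S_n$, pass to the enlarged ambiguity set $\cG$ with the MAD inequality, apply Theorem~\ref{th:stoplossrein}, and maximize over $d\in[0,\hat{d}]$ via the crossing point $d^*=2\bar{\mu}(m+z-\bar{\mu})/(m+z)$ with common value $m\bar{\mu}/(m+z)$. The only cosmetic difference is in the first case, where you argue by monotonicity in $d$ while the paper simply notes that the point mass at $\bar{\mu}$ lies in $\cG$ so the reinsurer pays $m$ almost surely; both give the same conclusion.
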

\begin{proof}
See Appendix~\ref{app: stoplossthms}. 
\end{proof}


Figure~\ref{fig: multi-dim claim example} displays our bound applied to a stop-loss reinsurance contract. As before, we model the three claims as random variables $X_i,\ i=1,2,3,$ for which the insurance company only has information regarding the support, mean, and MAD. For the sake of comparison, assume that the `true' losses are lognormally distributed with the aforementioned parameter values and that the insurer has reliably estimated the mean and MAD. Using the bounds for sums of random variables with mean-MAD information, we can provide a conservative bound for the expected stop-loss payment given the retention limit $z$. In Figure~\ref{fig: multi-dim claim example} we show the mean-MAD bound and the `true' expected claim payments for two different dependency structures. We again consider the independence and comonotonic copulas as the underlying structures coupling the losses $X_i,\ i=1,2,3$.

\begin{figure}[h!]
\begin{center}
\begin{tikzpicture}
\begin{axis}[
	xlabel={\(z\)},
	xmin=0, xmax=15,
	ymin=0, ymax=5.5,
    ylabel={\(\bE_{\bP}\Big[\phi\Big(z,\sum_{i=1}^3 X_i\Big)\Big]\)},
    ylabel style={yshift=0.2cm},
    xticklabel style={
    	/pgf/number format/precision=4,
    	/pgf/number format/fixed,
	},
    scaled x ticks=false,
	yticklabel style={/pgf/number format/precision=4, /pgf/number format/fixed},
	scaled y ticks=false,
    label style={font=\small},
    ticklabel style={font=\footnotesize},
    legend style={draw=none, font=\footnotesize},
    legend cell align={left}
]
\addplot[green!70!black, line width=0.75pt] table {Figure_Data/stoploss/stop9.dat};
\addlegendentry{independent};
\addplot[red!70!black, line width=0.75pt] table {Figure_Data/stoploss/stop10.dat};
\addlegendentry{comonotonic};
\addplot[blue!70!black, line width=0.75pt] table {Figure_Data/stoploss/stop11.dat};
\addlegendentry{Theorem~\ref{thm: aggregateclaim}};
\end{axis}
\end{tikzpicture}
\end{center}
\caption{An example of the multi-dimensional bound for the stop-loss risk aggregation application with $m=10$. The red and green line are obtained through simulation. \label{fig: multi-dim claim example}}
\end{figure}
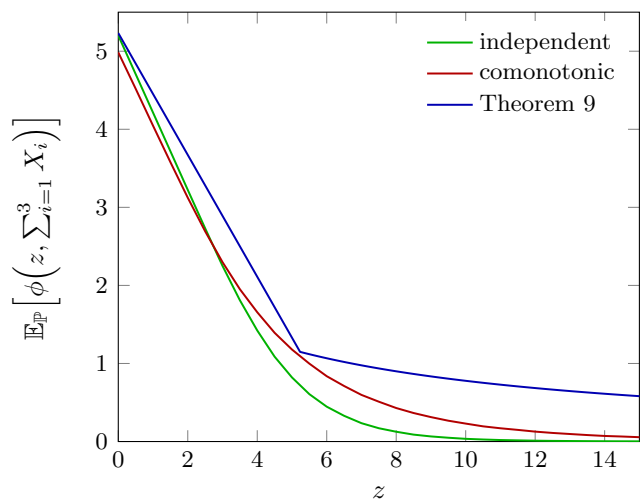

\subsection{Ambiguous chance constraints} \label{sec: acc} 
A large class of decision problems in OR can be formulated as optimization problems of the form 
\begin{align*}
    \min_{\mathbf{x}} \enskip & f(\mathbf{x}) \\
    \text{s.t.} \enskip       & g_i(\mathbf{x}, \bm{Z}) \leq 0 && i=1,\ldots,m,
\end{align*}
for some convex functions \(f\) and \(g_1, \ldots, g_m\). Here, \(\mathbf{x}\) denotes the decision variable, while \(\bm{Z }\) is some given parameter. In many applications, \(\bm{Z}\) is uncertain and the constraints are often replaced by chance constraints, i.e., for some accepted risk level \(\epsilon \in (0, 1)\), it is instead required that
\[\bP\left(g_i (\mathbf{x}, \bm{Z}) \leq 0 \right) \geq 1 - \epsilon,\]
for each \(i=1,\ldots,m\). This type of chance constraint is referred to as a single chance constraint, while a single probabilistic constraint on all constraints being satisfied simultaneously is known as a joint chance constraint. 
Examples of such applications include, but are not limited to finance \citep{dert2000optimal}, network design \citep{wang2007beta} and call-center staffing \citep{gurvich2010staffing}. Chance constraints suffer from tractability issues, however, and additionally require an exact specification of the distribution. Recently, therefore, there has been an emerging interest in ambiguous chance constraints, in which the distribution of the uncertain parameters is not fully specified. A single ambiguous chance constraint takes the form 
\begin{equation} \label{eq: general acc}
    \inf_{\bP \in \cP} \bP\left( g(\mathbf{x}, \bm{Z}) \leq 0 \right) \geq 1 - \epsilon,
\end{equation}
for some ambiguity set \(\cP\). The primary goals in analyzing such constraints are (i) determining under which conditions \eqref{eq: general acc} defines a convex set in \(\mathbf{x}\) and (ii) finding a representation and/or approximation of this set in terms of simple convex inequalities. Often, such conditions and representations are prohibitively hard to find when considering joint ambiguous chance constraints. We therefore contain our discussion to single ambiguous chance constraints.

One way to find convex reformulations of single ambiguous chance constraints is the use of classical probability inequalities. Hoeffding's inequality, for example, has been used by \citet{ben2000robust} and \citet{bertsimas2005optimal} to derive approximations and reformulations of \eqref{eq: general acc}, respectively, when \(g\) is linear and the components of \(\bm{Z}\) are independent, symmetric, and bounded. Building on that, \citet{bertsimas2019probabilistic} use sub-Gaussian theory to derive safe approximations under the same assumptions on \(\bm{Z}\), and the relaxed assumption that \(g\) is concave in \(\bm{Z}\).
Similarly, a generalized Chebyshev inequality is used by \citet{xu2012distributional} to find convex reformulations, while \citet{nemirovski2007convex} and \citet{postek2018robust} use Bernstein bounds to derive convex approximations. 
The tail probability bounds we derive also allow for such methods to be applied. We discuss convex reformulations of single chance constraints in which the uncertainty is present as a single random variable. 
Specifically, we consider the case where \(m = 1\), i.e., there is only a single uncertain parameter \(Z\), and use our main result from Theorem \ref{theorem:main} to reformulate the semi-infinite constraint \eqref{eq: general acc} into a convex constraint for certain forms of \(g\).



We first present a convex reformulation of an ambiguous chance constraint when \(g\) is convex in \(\mathbf{x}\) and affine in \(Z\). This case is often referred to as right-hand side uncertainty.

\begin{theorem} \label{thm: convex rhs cc}
Let \(\tilde{g} : \bR^n \rightarrow \bR\) and let \(Z\) be a \(1\)-dimensional random variable whose distribution lies in the ambiguity set
\[\cP = \left\{\bP : \bP \left[Z \in \left[-1, 1\right]\right] = 1, \enskip \bE \left[Z \right] = 0, \enskip \bE \left[ \left| Z \right| \right] = d \right\},\]
for some \(d \in \left[0, 1\right]\).
For any \(\epsilon \in \left(0, \frac{1}{2}\right)\) and \( \mathbf{x} \in\bR^n\) it holds that
\begin{equation} \label{eq: convex rhs cc}
\inf_{\bP \in \cP} \bP \left[\tilde{g}(\mathbf{x}) + Z \leq 0 \right] \geq 1 - \epsilon,
\end{equation}
if and only if
\begin{equation} \label{eq: convex rhs cc final}
\tilde{g}(\mathbf{x}) + \min\left\{1, \; \frac{d}{2\epsilon} \right\} \leq 0.
\end{equation}
\end{theorem}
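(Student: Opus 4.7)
The plan is to convert the chance constraint into a tail-probability constraint, apply Theorem~\ref{theorem:main} after a trivial shift of support, and then invert the resulting bound with respect to the threshold. Set $t = -\tilde{g}(\mathbf{x})$. Using that $\bP[\tilde{g}(\mathbf{x}) + Z \leq 0] = 1 - \bP[Z > t]$, the ambiguous chance constraint \eqref{eq: convex rhs cc} is equivalent to
\begin{equation*}
    \sup_{\bP \in \cP} \bP[Z > t] \leq \epsilon.
\end{equation*}
The worst-case probability on the left-hand side has exactly the form treated by Theorem~\ref{theorem:main}, except that the support is $[-1,1]$ rather than $[0,b]$; the remark following Theorem~\ref{theorem:main} on shifting to support $[a,b]$ (or equivalently, substituting $X = Z+1$ to get a random variable on $[0,2]$ with mean $1$ and MAD $d$) applies directly.

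After the shift I would instantiate the four pieces of \eqref{eq: worst-case prob} with $\mu=0$, $a=-1$, $b=1$ to obtain the worst-case tail as a piecewise function of $t$, with breakpoints $\tau_1 = -d/(2-d)$, $0$, and $\tau_2 = d/(2-d)$. The key observation is that for $t \leq \tau_2$ the bound is at least $1 - d/2 \geq 1/2 > \epsilon$, so the inequality $\sup_\bP \bP[Z>t] \leq \epsilon$ cannot hold there; hence we must have $t \in [\tau_2, 1]$, where the bound equals $d/(2t)$, yielding the requirement $t \geq d/(2\epsilon)$. Combined with the trivial observation that $\bP[Z > t] = 0$ whenever $t \geq 1$ (since $Z \leq 1$ almost surely), the constraint is satisfied precisely when
\begin{equation*}
    t \geq \min\Bigl\{1,\; \tfrac{d}{2\epsilon}\Bigr\}.
\end{equation*}

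Substituting $t = -\tilde{g}(\mathbf{x})$ gives the desired reformulation \eqref{eq: convex rhs cc final}, and both directions (necessity and sufficiency) follow from the \emph{tightness} of the bound in Theorem~\ref{theorem:main}. The main obstacle is the case distinction: I need to verify that in the regime $d/(2\epsilon) > 1$ the minimum in \eqref{eq: convex rhs cc final} is active because of the trivial event $\{Z > 1\} = \emptyset$, not because Theorem~\ref{theorem:main}'s fourth piece yields a feasible threshold within $[\tau_2,1]$; handling this boundary case carefully, and checking that the condition $\epsilon < 1/2$ precisely rules out the other three pieces of the worst-case tail, is where the argument needs attention.
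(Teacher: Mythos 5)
Your proposal is correct and follows essentially the same route as the paper's proof: rewrite the chance constraint as $\sup_{\bP\in\cP}\bP[Z>-\tilde{g}(\mathbf{x})]\leq\epsilon$, invoke Theorem~\ref{theorem:main} (after the support shift), use $\epsilon<\tfrac12$ together with $1-\tfrac{d}{2}\geq\tfrac12$ to rule out all but the last piece of the worst-case tail, and conclude $-\tilde{g}(\mathbf{x})\geq\min\{1,\,d/(2\epsilon)\}$. The only difference is presentational: you enumerate the four pieces and breakpoints explicitly, whereas the paper quotes the tail bound directly in the compact form $\min\{d/(-2\tilde{g}(\mathbf{x})),\,1-d/2\}$ for $-\tilde{g}(\mathbf{x})\in(0,1)$ and $0$ otherwise; the boundary case $d/(2\epsilon)>1$ is resolved exactly as you anticipate.
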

\begin{proof}
We first rewrite \eqref{eq: convex rhs cc} to
\[\sup_{\bP \in \cP} \bP \left[ Z > - \tilde{g}(\mathbf{x}) \right] \leq \epsilon.\]
From Theorem \ref{theorem:main} and the fact that \(\epsilon < \frac{1}{2}\) we know that it must hold that \(- \tilde{g}(\mathbf{x}) > \bE \left[ Z \right] = 0\). Given that requirement, we know by Theorem \ref{theorem:main} that 
\[\sup_{\bP \in \cP} \bP \left[Z > - \tilde{g}(\mathbf{x}) \right] = \begin{cases} \min \left\{\frac{d}{- 2 \tilde{g}(\mathbf{x}) }, \: 1 - \frac{d}{2}\right\} & \text{ if } - \tilde{g}(\mathbf{x}) < 1 \\
0 & \text{ if } - \tilde{g}(\mathbf{x}) \geq 1. \end{cases}\]
From \(d \in \left[0, 1\right]\) and \(\epsilon \in \left(0, \frac{1}{2}\right)\), it follows that \(1 - \frac{d}{2} > \epsilon\), and thus any feasible solution \(\mathbf{x}\) must satisfy \(- \tilde{g}(\mathbf{x}) \geq 1\) and/or \(\frac{d}{- 2 \tilde{g}(\mathbf{x})} \leq \epsilon\). The latter can be equivalently stated as
\[- \tilde{g}(\mathbf{x}) \geq \frac{d}{2 \epsilon},\]
which can easily be combined with the former as
\[- \tilde{g}(\mathbf{x}) \geq \min\left\{1, \; \frac{d}{2 \epsilon} \right\} \iff \tilde{g}(\mathbf{x}) + \min \left\{ 1, \; \frac{d}{2 \epsilon} \right\} \leq 0.\]
Because \(\frac{d}{2 \epsilon} > 0\), we find that the requirement \( - \tilde{g}(\mathbf{x}) > 0\) is redundant, and thus \eqref{eq: convex rhs cc} is equivalent to \eqref{eq: convex rhs cc final}. 
\end{proof}

We note that assuming \(d \in \left[0, 1\right]\) is equivalent to assuming that \(\cP_{(\mu, b, d)}\) is nonempty. Moreover, we note that we cannot assume a support of \(\left[-1, 1\right]\) and a mean of 0 without loss of generality, as this implies that the support of \(Z\) is symmetric around the mean. It is straightforward, however, to extend our results to an ambiguity set with support \(\left[-1, u \right]\) for some \(u > 0\), which can be assumed without loss of generality. 

A similar reasoning to that in Theorem \ref{thm: convex rhs cc} can be applied to joint chance constraints with independent right-hand side uncertainty. Here, because of the use of the support information of the random variable, we do not provide an exact reformulation, but a safe approximation instead. 
Providing a tractable reformulation when uncertainty is present in the left-hand side, on the other hand, is significantly more complicated. Using our results, we provide a reformulation when g is bilinear in \(\mathbf{x}\) and \(Z\). For the sake of conciseness, the first two results as well as the proof of the result below are included in Appendix \ref{app: acc results}.





\begin{theorem} \label{thm: single lhs cc}
Let \(\bar{\mathbf{a}}, \hat{\mathbf{a}} \in \bR^n\), \(h \in \bR\) and \(Z \in \bR\) be a random variable whose distribution lies in the ambiguity set
\[\cP = \left\{\bP : \bP \left[ Z \in \left[-1, 1\right]\right] = 1, \enskip \bE \left[ Z \right] = 0, \enskip \bE \left[ \left| Z \right| \right] = d \right\},\]
for some \(d \in \left[0, 1 \right]\). For any \(\epsilon \in \left(0, \frac{1}{2}\right)\) and \(\mathbf{x} \in \bR^n\) it holds that
\begin{equation} \label{eq: lhs affine cc}
\inf_{\bP \in \cP} \bP \left[\left(\bar{\mathbf{a}} + Z \hat{\mathbf{a}} \right)^\top \mathbf{x} \leq h \right] \geq 1 - \epsilon,
\end{equation}
if and only if
\begin{equation} \label{eq: lhs affine cc final}
\bar{\mathbf{a}}^\top \mathbf{x} + \min \left\{1, \; \frac{d}{2\epsilon}\right\} \cdot |\hat{\mathbf{a}}^\top \mathbf{x} | \leq h.
\end{equation}
\end{theorem}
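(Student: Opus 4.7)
The plan is to reduce \eqref{eq: lhs affine cc} to a worst-case tail probability on $Z$ and apply Theorem~\ref{theorem:main}. Set $\alpha:=\bar{\mathbf{a}}^\top\mathbf{x}$ and $\beta:=\hat{\mathbf{a}}^\top\mathbf{x}$; then the violation event is $\{\beta Z>h-\alpha\}$. When $\beta=0$ this event is deterministic and both \eqref{eq: lhs affine cc} and \eqref{eq: lhs affine cc final} collapse to $\alpha\leq h$. For $\beta\neq 0$, introduce $t^\ast:=(h-\alpha)/|\beta|$, so the violation event is $\{Z>t^\ast\}$ when $\beta>0$ and $\{Z<-t^\ast\}$ when $\beta<0$.

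The key simplification is that $\cP$ is invariant under the reflection $Z\mapsto -Z$: the support $[-1,1]$ is symmetric about $0$ and both $\bE[Z]=0$ and $\bE[|Z|]=d$ are preserved. A direct computation with the reflected measure $\bQ^\ast(A):=\bQ(-A)$ then yields $\sup_{\bP\in\cP}\bP[Z<-t^\ast]=\sup_{\bP\in\cP}\bP[Z>t^\ast]$, so \eqref{eq: lhs affine cc} is equivalent to $\sup_{\bP\in\cP}\bP[Z>t^\ast]\leq\epsilon$ uniformly in the sign of $\beta$.

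I would then invoke Theorem~\ref{theorem:main} with the shifted support $[-1,1]$ described in its remark (i.e. $a=-1$, $\mu=0$, $b=1$). The relevant threshold simplifies to $\tau_2=d/(2-d)\in(0,1]$, and the worst-case tail equals $d/(2t^\ast)$ on $[\tau_2,1]$, vanishes for $t^\ast\geq 1$, and is at least $1-d/2$ for $t^\ast\leq\tau_2$. Since $d\leq 1$ and $\epsilon<1/2$ force $1-d/2\geq 1/2>\epsilon$, feasibility rules out $t^\ast\leq\tau_2$; on $[\tau_2,1]$ the requirement $d/(2t^\ast)\leq\epsilon$ reads $t^\ast\geq d/(2\epsilon)$, and the region $t^\ast\geq 1$ is trivially feasible. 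Combining these two regions gives
\begin{equation*}
\sup_{\bP\in\cP}\bP[Z>t^\ast]\leq\epsilon \iff t^\ast\geq\min\bigl\{1,\,d/(2\epsilon)\bigr\}.
\end{equation*}

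Substituting $t^\ast=(h-\alpha)/|\beta|$ and clearing the nonnegative factor $|\beta|$ produces $\bar{\mathbf{a}}^\top\mathbf{x}+\min\{1,\,d/(2\epsilon)\}\cdot|\hat{\mathbf{a}}^\top\mathbf{x}|\leq h$, which is exactly \eqref{eq: lhs affine cc final}; the $\beta=0$ case slots in because the absolute-value term vanishes. The only real obstacle is the sign split on $\beta=\hat{\mathbf{a}}^\top\mathbf{x}$, and the reflection symmetry of $\cP$ is precisely what packages both cases into the single absolute-value term appearing in \eqref{eq: lhs affine cc final}.
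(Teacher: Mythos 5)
Your proposal is correct and follows essentially the same route as the paper's proof: reduce the chance constraint to the worst-case tail bound of Theorem~\ref{theorem:main} on the symmetric support $[-1,1]$, use $\epsilon<\tfrac{1}{2}$ together with $1-\tfrac{d}{2}\geq\tfrac{1}{2}$ to discard that branch of the minimum, and combine the two feasibility regimes into the single condition $t^\ast\geq\min\{1,\,d/(2\epsilon)\}$. Your treatment is in fact slightly more careful than the paper's, since you explicitly dispose of the degenerate case $\hat{\mathbf{a}}^\top\mathbf{x}=0$ and justify the absolute value via the reflection invariance of $\cP$, both of which the paper leaves implicit.
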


Observe that \eqref{eq: lhs affine cc final} has a linear representation, and optimization problems containing ambiguous chance constraints of the form \eqref{eq: lhs affine cc} can thus be solved very efficiently. Also observe that our results extend to any constraint that consists of a bilinear term in \(\bm{x}\) and \(Z\) and any other convex term independent of \(Z\).

For the sake of conciseness, we only presented convex reformulations for two types of ambiguous chance constraints in this section. The tail probability bound derived in Theorem \ref{theorem:main} can be applied to derive convex reformulations and safe approximations to other ambiguous chance constraints as well. 

We mention two related works to our discussion on ambiguous chance constraints. \citet{hanasusanto2017ambiguous} present a tractable framework for joint ambiguous chance constraints under a few simplifying conditions. In particular, they assume a conic, hence unbounded, support, which is a key difference to our approach. Their approach is very powerful in settings for which an unbounded support makes sense, however, as they are able to elegantly deal with \emph{joint} ambiguous chance constraints as well.
\citet{xie2018deterministic}, on the other hand, consider ambiguous chance constraints given a bounded support and moment information. Their assumptions on the ambiguity set do, however, exclude exact distributional information on nonlinear functions of the uncertain parameter, which we do assume in exact knowledge of the mean absolute deviation. 

\subsection{Optimization problem from radiotherapy} \label{sec: radiotherapy}

We now illustrate these implications by applying our result to an optimization problem that arises in radiotherapy. 
Here, the biological effective radiation dose delivered to a tumor is to be maximized subject to a constraint on the biological effective dose delivered to the surrounding healthy tissue. Mathematically, the biological effective dose (BED) for a dose \(\mathbf{x} \in \bR^n\) delivered over \(n\) fractions is given by
\[B(\mathbf{x}) = \sum_{t=1}^n x_t + \frac{1}{\rho} x_t^2,\]
where \(\rho\) is the radiosensitivity parameter of the irradiated tissue. 
More specifically, 
it can be interpreted as the tissue's sensitivity to fractionation, where a low value indicates a high sensitivity to fractionation, i.e., the distribution of treatment over multiple fractions. 

While there is an extensive body of research on the value of \(\rho\) for different tumor sites, it remains subject to significant uncertainty \citep{joiner2016basic}. Moreover, since this value can differ from patient to patient, there is a very limited amount of data available and there is little evidence to suggest it follows some well known distribution. Throughout the rest of the example, we denote the sensitivity to fractionation by \(\rho_1\) and \(\rho_2\) for the tumor and the surrounding healthy tissue, respectively.

For illustrative purposes, we consider a setting in which it has been decided to deliver the treatment over two fractions, i.e., the optimization variables are limited to the dose in the first and second fraction. Moreover, we focus on the uncertainty of \(\rho_2\), and thus model the restriction of sparing the healthy tissue through an ambiguous chance constraint. Mathematically, we wish to solve the following optimization problem \citep{eikelder2019adjustable}:
\begin{subequations} \label{eq: radiotherapy acc}
    \begin{align}
        \max_{\mathbf{x} \in \bR^2} \enskip & x_1 + x_2 + \frac{1}{\rho_1} \left( x_1^2 + x_2^2 \right) \\
        \text{s.t.} \enskip & \bP\left( \sigma (x_1 + x_2) + \frac{1}{\rho_2} \sigma^2 \left(x_1^2 + x_2^2 \right) \leq t(\rho_2) \right) \geq 1 - \epsilon && \forall \bP \in \cP_{(\mu, b, d)} \label{eq: radiotherapy acc b}\\
        & x_1, x_2 \geq x_{min},
    \end{align}
\end{subequations}
where \(\sigma\) is the generalized dose-sparing factor that denotes the fraction of the mean tumor dose that the healthy tissue receives on average, \(x_{min}\) is the minimum dose that must be delivered in each fraction, and \(t(\rho_2)\) denotes the tolerance level of the healthy tissue and is given by
\[t(\rho_2) = \phi D \left(1 + \frac{\phi D}{T} \frac{1}{\rho_2} \right).\]
In other words, the healthy tissue is known to tolerate a total dose of \(D\) gray if it is delivered in \(T\) fractions under dose shape factor \(\phi\). This dose shape factor is a parameter that characterizes the spatial heterogeneity of a dose distribution \citep{perko2018derivation}. 

The ambiguity of \(\rho\) is modeled through the mean-MAD ambiguity set, where the lower bound of the support is given by \(a\) instead of 0. The ambiguous chance constraint \eqref{eq: radiotherapy acc b} is not naturally stated in a form that Theorem \ref{thm: convex rhs cc} or \ref{thm: single lhs cc} can be applied to. It can be rewritten, however, as
\begin{equation} \label{eq: reformulation radiotherapy acc}
\bP\left(\rho \cdot \left(\sigma(x_1 + x_2) - \phi D \right) > \frac{\phi^2 D^2}{T} -  \sigma^2 (x_1^2 + x_2^2 )\right) \leq \epsilon \qquad \qquad \forall \bP \in \cP_{(\mu, b, d)},
\end{equation}
where we note multiplication by \(\rho\) is allowed as its support is nonnegative. Leveraging the tail probability bound, we find for \(\epsilon \in (0, \frac{\mu - a}{b - a} )\) that \eqref{eq: reformulation radiotherapy acc} is equivalent to
\[\mu \sigma (x_1 + x_2) + \sigma^2 (x_1^2 + x_2^2) + \frac{d}{2 \epsilon} \left| \sigma (x_1 + x_2) - \phi D \right| \leq \mu \phi D + \frac{\phi^2 D^2}{T}.\]
We solve \eqref{eq: radiotherapy acc} for a specific, realistic set of parameters taken from \citet{eikelder2019adjustable}, which are reported in Table \ref{tab: radiotherapy parameters}.
\begin{table}[h]
    \centering
    \begin{tabular}{lr}
        \toprule
        Parameter   & Value \\ \midrule
        \(\tau\)    & 4 \\
        \(\sigma\)  & 0.9 \\
        \(\phi\)    & 2 \\
        \(D\)       & 27 \\
        \(T\)       & 5 \\
        \(x_{min}\) & 1.5 \\
        \(a\)       & 3 \\
        \(b\)       & 6 \\
        \(\mu\)     & 4 \\
        \(d\)       & 0.25 \\
        \bottomrule
    \end{tabular}
    \caption{Parameter values used for solving \eqref{eq: radiotherapy acc}.}
    \label{tab: radiotherapy parameters}
\end{table}
Figure \ref{fig: radiotherapy feas region} shows the feasible region and optimal solution of \eqref{eq: radiotherapy acc} for different values of \(\epsilon\) as well as the feasible region when we assume having the exact knowledge that \(\rho = \mu\). Remarkable in this example is the similarity between the feasible region of the problem without uncertainty and that of the ambiguous problem for \(\epsilon = 0.1\) and \(\epsilon = 0.05\). From the feasible region for \(\epsilon = 0.01\), however, it is clear that requiring that a low risk of violation results in a solution that is much worse in terms of tumor BED. It does, on the other hand, illustrate how the shape of the feasible region changes with \(\epsilon\): the feasibility of unbalanced solutions, i.e., solutions that administer a different dose in the two fractions, is impacted much more severely than that of balanced solutions. 
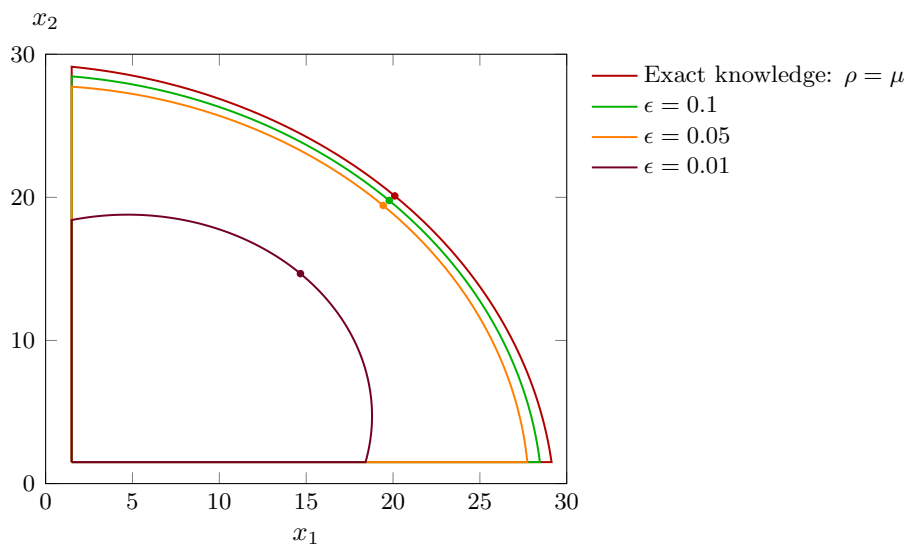
\begin{figure}[h!]
    \centering
    \begin{tikzpicture}
    \begin{axis}[
    	xlabel={$x_1$},
    	xmin=0, xmax=30,
    	ymin=0, ymax=30,
        ylabel={$x_2$},
        y label style={at={(current axis.above origin)},rotate=-90,anchor=south,yshift=0.2cm},
        xticklabel style={
        	/pgf/number format/precision=4,
        	/pgf/number format/fixed,
    	},
    	yticklabel style={/pgf/number format/precision=4, /pgf/number format/fixed},
        label style={font=\small},
        ticklabel style={font=\footnotesize},
        legend style={draw=none, font=\footnotesize},
        legend cell align={left},
        legend pos = outer north east
    ]
    \addplot[red!70!black, line width=0.75pt] table {Figure_Data/ACC/Radio_Feasible_Nominal.dat};
    \addlegendentry{Exact knowledge: \(\rho = \mu\)};
    \node at (20.0998, 20.0998)[red!70!black,circle,fill,inner sep=1pt]{};
    
    \addplot[green!70!black, line width=0.75pt] table {Figure_Data/ACC/Radio_Feasible_High_Epsilon.dat};
    \addlegendentry{\(\epsilon = 0.1\)};
    \node at (19.7795, 19.7795)[green!70!black,circle,fill,inner sep=1pt]{};
    
    \addplot[orange, line width=0.75pt] table {Figure_Data/ACC/Radio_Feasible_Medium_Epsilon.dat};
    \addlegendentry{\(\epsilon = 0.05\)};
    \node at (19.4323, 19.4323)[orange,circle,fill,inner sep=1pt]{};
    
    \addplot[purple!60!black, line width=0.75pt] table {Figure_Data/ACC/Radio_Feasible_Low_Epsilon.dat};
    \addlegendentry{\(\epsilon = 0.01\)};
    \node at (14.6704, 14.6704)[purple!60!black,circle,fill,inner sep=1pt]{};

    \end{axis}
    \end{tikzpicture}
    \caption{The feasible region and optimal solution of \eqref{eq: radiotherapy acc} for different values of \(\epsilon\) as well as exact knowledge that \(\rho = \mu\). Dots indicate the optimal solution.}
    \label{fig: radiotherapy feas region}
\end{figure}


\section{Conclusion and outlook}

Tail probabilities are ubiquitous in probabilistic studies in many areas of science and application domains. As the original Chebyshev's inequality for mean-variance ambiguity, we expect our novel tail bounds for mean-MAD ambiguity to find many applications. 

In our search for tight bounds under limited information, we had to solve for the worst-case distribution and worst-case value of the expectation of the indicator function $\1\{ X\geq t \}$. In this paper the limited information was captured ambiguity sets $\mathcal{P}_{(\mu, b, d)}$ and $\mathcal{P}_{(\mu, b, d,\beta)}$, and it turned out that the combination of the non-convex indicator function with these ambiguity set gave rise to semi-infinite linear programs with easy, closed-form solutions. 

In future work, we expect to find more such solvable classes, i.e.~specific combinations of objective function (other than the indicator function) and ambiguity sets that together give rise to solvable liner programs and hence easy extremal distributions. In this way, one could try to sharpen the tail bounds by including more information (e.g.~higher moments or percentiles), or to consider objective functions other than the tail probability. Our proof method based on solving the dual problem with piecewise-linear majorants is not tailor-made for  the indicator function, and could potentially work for a much larger class of (measurable) objective functions. Another direction we shall pursue is the application of the bounds to more complex, and possible high-dimensional robust optimization problems. To do so, we shall leverage the connection with the quickly evolving field of DRO, as illustrated by examples in Section \ref{sec: applications2}. Indeed, minmax and maxmin decision problems arise naturally, and the bounds and proof techniques can help in advancing that field.

\ACKNOWLEDGMENT{We thank Vera van den Dool (ORTEC) for her ideas and efforts in the preliminary stages of this research and for sparking our interest in the topic, and Jacco van Eijk for communicating the proof of Theorem \ref{mainmadthm}. The research of the first author was funded by the Netherlands Organisation for Scientific Research (NWO) Research Talent [Grant 406.17.511].}

\bibliographystyle{apalike} 
\bibliography{References,bibbook}

\begin{thebibliography}{}

\bibitem[Asmussen and Albrecher, 2010]{asmussen2010ruin}
Asmussen, S. and Albrecher, H. (2010).
\newblock {\em Ruin Probabilities}.
\newblock Second edition. World scientific.

\bibitem[Ben-Tal and Hochman, 1972]{ben1972more}
Ben-Tal, A. and Hochman, E. (1972).
\newblock More bounds on the expectation of a convex function of a random
  variable.
\newblock {\em Journal of Applied Probability}, 9(4):803--812.

\bibitem[Ben-Tal and Hochman, 1976]{BenTal1976}
Ben-Tal, A. and Hochman, E. (1976).
\newblock Stochastic programs with incomplete information.
\newblock {\em Operations Research}, 24(2):336--347.

\bibitem[Ben-Tal and Hochman, 1985]{ben1985approximation}
Ben-Tal, A. and Hochman, E. (1985).
\newblock Approximation of expected returns and optimal decisions under
  uncertainty using mean and mean absolute deviation.
\newblock {\em Zeitschrift f{\"u}r Operations Research}, 29(7):285--300.

\bibitem[Ben-Tal and Nemirovski, 2000]{ben2000robust}
Ben-Tal, A. and Nemirovski, A. (2000).
\newblock Robust solutions of linear programming problems contaminated with
  uncertain data.
\newblock {\em Mathematical Programming}, 88(3):411--424.

\bibitem[Bertsimas et~al., 2019]{bertsimas2019probabilistic}
Bertsimas, D., Den~Hertog, D., and Pauphilet, J. (2019).
\newblock Probabilistic guarantees in robust optimization.
\newblock {\em Available on Optimization Online \hspace{-.22cm}}.

\bibitem[Bertsimas and Popescu, 2005]{bertsimas2005optimal}
Bertsimas, D. and Popescu, I. (2005).
\newblock Optimal inequalities in probability theory: A convex optimization
  approach.
\newblock {\em SIAM Journal on Optimization}, 15(3):780--804.

\bibitem[Bienaym{\'e}, 1853]{bienayme1853considerations}
Bienaym{\'e}, I.-J. (1853).
\newblock {\em Consid{\'e}rations {\`a} l'appui de la d{\'e}couverte de Laplace
  sur la loi de probabilit{\'e} dans la m{\'e}thode des moindres carr{\'e}s}.
\newblock Imprimerie de Mallet-Bachelier.

\bibitem[Carrasco et~al., 2018]{carrasco2018optimal}
Carrasco, V., Luz, V.~F., Kos, N., Messner, M., Monteiro, P., and Moreira, H.
  (2018).
\newblock Optimal selling mechanisms under moment conditions.
\newblock {\em Journal of Economic Theory}, 177:245--279.

\bibitem[Carroll, 2019]{carroll2019robustness}
Carroll, G. (2019).
\newblock Robustness in mechanism design and contracting.
\newblock {\em Annual Review of Economics}, 11:139--166.

\bibitem[Che, 2019]{che2019robust}
Che, E. (2019).
\newblock Robust reserve pricing in auctions under mean constraints.
\newblock {\em Available at SSRN 3488222 \hspace{-.22cm}}.

\bibitem[Chebyshev, 1867]{chebyshev1867valeurs}
Chebyshev, P.~L. (1867).
\newblock Des valeurs moyennes.
\newblock {\em Journal de Math\'{e}matiques Pures et Appliqu\'{e}s},
  12:177--184.

\bibitem[Chen et~al., 2009]{chen2009risk}
Chen, Y., Xu, M., and Zhang, Z.~G. (2009).
\newblock A risk-averse newsvendor model under the cvar criterion.
\newblock {\em Operations Research}, 57(4):1040--1044.

\bibitem[Choi et~al., 2011]{choi2011multiproduct}
Choi, S., Ruszczy{\'n}ski, A., and Zhao, Y. (2011).
\newblock A multiproduct risk-averse newsvendor with law-invariant coherent
  measures of risk.
\newblock {\em Operations Research}, 59(2):346--364.

\bibitem[Chung, 2001]{chung2001course}
Chung, K. (2001).
\newblock {\em A {C}ourse in {P}robability {T}heory}.
\newblock Academic {P}ress.

\bibitem[Cox, 1991]{cox1991bounds}
Cox, S.~H. (1991).
\newblock Bounds on expected values of insurance payments and option prices.
\newblock {\em Transactions of the Society of Actuaries}, 43:231--260.

\bibitem[Das et~al., 2018]{das2018heavy}
Das, B., Dhara, A., and Natarjan, K. (2018).
\newblock On the heavy-tail behavior of the distributionally robust newsvendor.
\newblock {\em arXiv preprint arXiv:1806.05379}.

\bibitem[De~Schepper and Heijnen, 1995]{schepper1995general}
De~Schepper, A. and Heijnen, B. (1995).
\newblock General restrictions on tail probabilities.
\newblock {\em Journal of Computational and Applied Mathematics},
  64(1-2):177--188.

\bibitem[De~Vylder and Goovaerts, 1982]{devylder1982analytical}
De~Vylder, F. and Goovaerts, M.~J. (1982).
\newblock Analytical best upper bounds on stop-loss premiums.
\newblock {\em Insurance: Mathematics and Economics}, 1(3):163--175.

\bibitem[Dert and Oldenkamp, 2000]{dert2000optimal}
Dert, C. and Oldenkamp, B. (2000).
\newblock Optimal guaranteed return portfolios and the casino effect.
\newblock {\em Operations Research}, 48(5):768--775.

\bibitem[Feller, 1971]{feller1971}
Feller, W. (1971).
\newblock {\em An {I}ntroduction to {P}robability {T}heory and its
  {A}pplications. {V}ol. {II}.}
\newblock Second edition. John Wiley \& Sons Inc., New York.

\bibitem[Gauss, 1821]{gauss1821theoria}
Gauss, C.~F. (1821).
\newblock Theoria combinationis observationum erroribus minimis obnoxiae, pars
  prior.
\newblock {\em SIAM: Philadelphia \hspace{-.22cm}}.

\bibitem[Ghosal and Wiesemann, 2020]{ghosal2020distributionally}
Ghosal, S. and Wiesemann, W. (2020).
\newblock The distributionally robust chance-constrained vehicle routing
  problem.
\newblock {\em Operations Research}, 68(3):716--732.

\bibitem[Glasserman and Li, 2005]{glasserman2005importance}
Glasserman, P. and Li, J. (2005).
\newblock Importance sampling for portfolio credit risk.
\newblock {\em Management Science}, 51(11):1643--1656.

\bibitem[Grechuk et~al., 2010]{grechuk2010chebyshev}
Grechuk, B., Molyboha, A., and Zabarankin, M. (2010).
\newblock Chebyshev inequalities with law invariant deviation measures.
\newblock {\em Probability in the Engineering and Informational Sciences},
  24(1):145--170.

\bibitem[Gurvich et~al., 2010]{gurvich2010staffing}
Gurvich, I., Luedtke, J., and Tezcan, T. (2010).
\newblock Staffing call centers with uncertain demand forecasts: A
  chance-constrained optimization approach.
\newblock {\em Management Science}, 56(7):1093--1115.

\bibitem[Han et~al., 2014]{han2014risk}
Han, Q., Du, D., and Zuluaga, L.~F. (2014).
\newblock A risk-and ambiguity-averse extension of the max-min newsvendor order
  formula.
\newblock {\em Operations Research}, 62(3):535--542.

\bibitem[Han et~al., 2015]{han2015convex}
Han, S., Tao, M., Topcu, U., Owhadi, H., and Murray, R.~M. (2015).
\newblock Convex optimal uncertainty quantification.
\newblock {\em SIAM Journal on Optimization}, 25(3):1368--1387.

\bibitem[Hanasusanto et~al., 2017]{hanasusanto2017ambiguous}
Hanasusanto, G.~A., Roitch, V., Kuhn, D., and Wiesemann, W. (2017).
\newblock Ambiguous joint chance constraints under mean and dispersion
  information.
\newblock {\em Operations Research}, 65(3):751--767.

\bibitem[Heijnen, 1990]{heijnen1990best}
Heijnen, B. (1990).
\newblock Best upper and lower bounds on modified stop loss premiums in case of
  known range, mode, mean and variance of the original risk.
\newblock {\em Insurance: Mathematics and Economics}, 9(2-3):207--220.

\bibitem[Isii, 1962]{Isii1962}
Isii, K. (1962).
\newblock On sharpness of {T}chebycheff-type inequalities.
\newblock {\em Annals of the Institute of Statistical Mathematics},
  14(1):185--197.

\bibitem[Jansen et~al., 1986]{jansen1986upper}
Jansen, K., Haezendonck, J., and Goovaerts, M.~J. (1986).
\newblock Upper bounds on stop-loss premiums in case of known moments up to the
  fourth order.
\newblock {\em Insurance: Mathematics and Economics}, 5(4):315--334.

\bibitem[Joiner and Van~der Kogel, 2016]{joiner2016basic}
Joiner, M.~C. and Van~der Kogel, A. (2016).
\newblock {\em Basic Clinical Radiobiology}, volume~1.
\newblock CRC press.

\bibitem[Kaas et~al., 2008]{kaas2008modern}
Kaas, R., Goovaerts, M., Dhaene, J., and Denuit, M. (2008).
\newblock {\em Modern Actuarial Risk Theory: using R}.
\newblock Second edition. Springer-Verlag.

\bibitem[Kos and Messner, 2015]{kos2015selling}
Kos, N. and Messner, M. (2015).
\newblock Selling to the mean.
\newblock {\em Available at SSRN 2632014}.

\bibitem[McNeil et~al., 2015]{mcneil2015quantitative}
McNeil, A.~J., Frey, R., and Embrechts, P. (2015).
\newblock {\em Quantitative Risk Management: Concepts, Techniques and Tools}.
\newblock Princeton University Press.

\bibitem[Myerson, 1981]{myerson1981optimal}
Myerson, R.~B. (1981).
\newblock Optimal auction design.
\newblock {\em Mathematics of Operations Research}, 6(1):58--73.

\bibitem[Natarajan et~al., 2017]{semivariance}
Natarajan, K., Sim, M., and Uichanco, J. (2017).
\newblock Asymmetry and ambiguity in newsvendor models.
\newblock {\em Management {S}cience}, 64(7):3146--3167.

\bibitem[Natarajan and Zhou, 2007]{natarajan2007mean}
Natarajan, K. and Zhou, L. (2007).
\newblock A mean--variance bound for a three-piece linear function.
\newblock {\em Probability in the Engineering and Informational Sciences},
  21(4):611--621.

\bibitem[Nemirovski and Shapiro, 2007]{nemirovski2007convex}
Nemirovski, A. and Shapiro, A. (2007).
\newblock Convex approximations of chance constrained programs.
\newblock {\em SIAM Journal on Optimization}, 17(4):969--996.

\bibitem[Perakis and Roels, 2008]{perakis2008regret}
Perakis, G. and Roels, G. (2008).
\newblock Regret in the newsvendor model with partial information.
\newblock {\em Operations Research}, 56(1):188--203.

\bibitem[Perk{\'o} et~al., 2018]{perko2018derivation}
Perk{\'o}, Z., Bortfeld, T., Hong, T., Wolfgang, J., and Unkelbach, J. (2018).
\newblock Derivation of mean dose tolerances for new fractionation schemes and
  treatment modalities.
\newblock {\em Physics in Medicine \& Biology}, 63(3):035038.

\bibitem[Popescu, 2005]{popescu2005semidefinite}
Popescu, I. (2005).
\newblock A semidefinite programming approach to optimal-moment bounds for
  convex classes of distributions.
\newblock {\em Mathematics of Operations Research}, 30(3):632--657.

\bibitem[Postek et~al., 2018]{postek2018robust}
Postek, K., Ben-Tal, A., Den~Hertog, D., and Melenberg, B. (2018).
\newblock Robust optimization with ambiguous stochastic constraints under mean
  and dispersion information.
\newblock {\em Operations {R}esearch}, 66(3):814--833.

\bibitem[Riley and Zeckhauser, 1983]{riley1983optimal}
Riley, J. and Zeckhauser, R. (1983).
\newblock Optimal selling strategies: When to haggle, when to hold firm.
\newblock {\em The Quarterly Journal of Economics}, 98(2):267--289.

\bibitem[Rogosinski, 1958]{rogosinski1958moments}
Rogosinski, W.~W. (1958).
\newblock Moments of non-negative mass.
\newblock {\em Proceedings of the Royal Society of London. Series A.
  Mathematical and Physical Sciences}, 245(1240):1--27.

\bibitem[Rujeerapaiboon et~al., 2018]{rujeerapaiboon2018chebyshev}
Rujeerapaiboon, N., Kuhn, D., and Wiesemann, W. (2018).
\newblock Chebyshev inequalities for products of random variables.
\newblock {\em Mathematics of Operations Research}, 43(3):887--918.

\bibitem[Scarf, 1958]{Scarf1958}
Scarf, H.~E. (1958).
\newblock A min-max solution of an inventory problem.
\newblock In Arrow, K.~J., Karlin, S., and Scarf, H.~E., editors, {\em Studies
  in the Mathematical Theory of Inventory and Production}. Stanford University
  Press.

\bibitem[Shapiro et~al., 2009]{shapiro2009lectures}
Shapiro, A., Dentcheva, D., and Ruszczy{\'n}ski, A. (2009).
\newblock {\em Lectures on Stochastic Programming: Modeling and Theory}.
\newblock SIAM.

\bibitem[Suzdaltsev, 2018]{suzdaltsev2018distributionally}
Suzdaltsev, A. (2018).
\newblock Distributionally robust pricing in auctions.
\newblock {\em Available at SSRN 3119305}.

\bibitem[Ten~Eikelder et~al., 2019]{eikelder2019adjustable}
Ten~Eikelder, S., Ajdari, A., Bortfeld, T., and Den~Hertog, D. (2019).
\newblock Adjustable robust treatment-length optimization in radiation therapy.
\newblock {\em arXiv preprint arXiv:1906.12116 \hspace{-.22cm}}.

\bibitem[{Van Eijk} and {Van Leeuwaarden}, 2020]{jacco}
{Van Eijk}, J. and {Van Leeuwaarden}, J. S.~H. (2020).
\newblock Private communication \hspace{-.22cm}.

\bibitem[Van~Parys et~al., 2016a]{vanParys2016frechet}
Van~Parys, B. P.~G., Goulart, P.~J., and Embrechts, P. (2016a).
\newblock Fr{\'e}chet inequalities via convex optimization.
\newblock {\em Available on Optimization Online \hspace{-.22cm}}.

\bibitem[Van~Parys et~al., 2016b]{vanparys2016generalized}
Van~Parys, B. P.~G., Goulart, P.~J., and Kuhn, D. (2016b).
\newblock Generalized {G}auss inequalities via semidefinite programming.
\newblock {\em Mathematical Programming}, 156(1-2):271--302.

\bibitem[Vandenberghe et~al., 2007]{vandenberghe2007generalized}
Vandenberghe, L., Boyd, S., and Comanor, K. (2007).
\newblock Generalized {C}hebyshev bounds via semidefinite programming.
\newblock {\em SIAM Review}, 49(1):52--64.

\bibitem[Wang, 2007]{wang2007beta}
Wang, J. (2007).
\newblock The $\beta$-reliable median on a network with discrete probabilistic
  demand weights.
\newblock {\em Operations Research}, 55(5):966--975.

\bibitem[Xie and Ahmed, 2018]{xie2018deterministic}
Xie, W. and Ahmed, S. (2018).
\newblock On deterministic reformulations of distributionally robust joint
  chance constrained optimization problems.
\newblock {\em SIAM Journal on Optimization}, 28(2):1151--1182.

\bibitem[Xin and Goldberg, 2013]{xin2013time}
Xin, L. and Goldberg, D.~A. (2013).
\newblock Time (in) consistency of multistage distributionally robust inventory
  models with moment constraints.
\newblock {\em arXiv preprint arXiv:1304.3074 \hspace{-.22cm}}.

\bibitem[Xu et~al., 2012]{xu2012distributional}
Xu, H., Caramanis, C., and Mannor, S. (2012).
\newblock A distributional interpretation of robust optimization.
\newblock {\em Mathematics of Operations Research}, 37(1):95--110.

\bibitem[Yue et~al., 2006]{yue2006expected}
Yue, J., Chen, B., and Wang, M.-C. (2006).
\newblock Expected value of distribution information for the newsvendor
  problem.
\newblock {\em Operations Research}, 54(6):1128--1136.

\end{thebibliography}

\appendix

\section{Proofs of tail bounds}\label{EC:betaproofs}
\begin{proof}[Proof Theorem~\ref{theorem:betaUB}]
We will show that additional information on a particular instance of the tail distribution (e.g., $\beta=\prob{X\geq\mu}$) results in tighter bounds. We again consider the Borel measurable function $\1_{\{x\geq t\}}$. Under $\mathcal{P}_{(\mu,b,d,\beta)}$ ambiguity of the random variable $X$ we now need to solve
\begin{equation}\label{eq:primal2}
\begin{aligned}
&\! \sup_{\mathbb{P}\in \mathcal{M}^+} &  &\int_x \1_{\{x\geq t\}}{\rm d} \mathbb{P}(x)\\
&\text{s.t.} &      & \int_x {\rm d}\mathbb{P}(x)=1, \ \int_x x{\rm d}\mathbb{P}(x)=\mu, \ \int_x |x-\mu|{\rm d}\mathbb{P}(x)=d,\ \int_x \1_{\{x\geq\mu\}}{\rm d}\mathbb{P}(x)=\beta,   
\end{aligned}
\end{equation}
 which is a semi-infinite linear program with four equality constraints.


Consider the dual of \eqref{eq:primal2},
\begin{equation}\label{eq:dual2}
\begin{aligned}
&\inf_{\lambda_0,\lambda_1,\lambda_2, \lambda_3} &  &\lambda_0 + \lambda_1 \mu+\lambda_2 d+\lambda_3\beta\\
&\text{s.t.} &      & \1_{\{x\geq t\}}\leq \lambda_0  +\lambda_1 x+\lambda_2 |x-\mu|+\lambda_3 \1_{\{x\geq\mu\}}, \ \forall x\in[0,b].
\end{aligned}
\end{equation}
Define $F(x)=\lambda_0  +\lambda_1 x+\lambda_2 |x-\mu|+\lambda_3\1_{\{x\geq\mu\}}$. Then the inequality in \eqref{eq:dual2} can be written as $\1_{\{x\geq t\}}\leq F(x)$, $\forall x$, i.e.~$F(x)$ majorizes $\1_{\{x\geq t\}}$. Note that $F(x)$ has both a 'kink' and a jump discontinuity at $x=\mu$. The dual problem has four variables, and therefore the tightest majorant touches $\1\{x\geq t\}$ at four or fewer points. Since $F(x)$ is piecewise linear with a jump discontinuity there are four candidate scenarios, which are described in Figure~\ref{fig:majors2}. When $t\in[0,\mu)$, $F(x)$ touches $\1_{\{x\geq t\}}$ in $\{0,t,\mu,b\}$ (scenario 1a), or $F(x)=1$ and touches in $\{t,\mu,b\}$ (scenario 1b). When $t\in[\mu,b]$, $F(x)$ touches in $\{0,\mu,t\}$ (scenario 2a), or in $\{0,t,b\}$ (scenario 2b).

\begin{figure}[h!]
\begin{center}
\begin{minipage}[b]{0.45\linewidth}
\begin{tikzpicture}[scale=1.3]
\draw [<->,thick] (0,2.5) node (yaxis) [above] {}
        |- (5,0) node (xaxis) [right] {$x$};
        \draw[blue,thick] (0.5,2.2) -- (1.2,2.2) node[right] {$\1\{x\geq t\}$};
        \draw[densely dashed] (0.5,1.8) -- (1.2,1.8) node[right] {$F_{1a}(x)$};
        \draw[loosely dashed] (0.5,1.4) -- (1.2,1.4) node[right] {$F_{1b}(x)$};
        \coordinate (mu+) at (4,1);
        \coordinate (mu-) at (4,0);
        \coordinate (mu--) at (4,0.01);
        \coordinate (z) at (0, 0);
        \coordinate (z--) at (0.01,0.01);
        \coordinate (b) at (5,1);
        \coordinate (b+) at (5,2.5);
        \coordinate (t+) at (2.5,1);
        \coordinate (t-) at (2.5,0);
        \draw[blue, thick] (z) -- (t-);
        \draw[blue, dotted] (t-) -- (t+);
        \draw[blue, thick] (t+) -- (b);
        \draw[densely dashed] (z) -- (t+);
        \draw[densely dashed] (t+) -- (4,1.6);
        \draw[dotted] (4,1.6) -- (4,1.03);
        \draw[densely dashed] (4,1.03) -- (5,1.03);
        \draw[loosely dashed] (0,1.01) -- (5,1.01);
        \draw[dotted] (mu-) -- (mu-) node[below] {$\mu$};
        \draw[dotted] (t-) -- (t-) node[below] {$t$};
        \draw[dotted] (0,0) -- (0,0) node[below] {$0$};
        \draw[dotted] (4.85,0) --(4.85,0) node[below] {$b$};

\end{tikzpicture}
\end{minipage}
\hfill
\begin{minipage}[b]{0.45\linewidth}
\begin{tikzpicture}[scale=1.3]
\draw [<->,thick] (0,2.5) node (yaxis) [above] {}
        |- (5,0) node (xaxis) [right] {$x$};
        \draw[blue,thick] (0.5,2.2) -- (1.2,2.2) node[right] {$\1\{x\geq t\}$};
        \draw[densely dashed] (0.5,1.8) -- (1.2,1.8) node[right] {$F_{2a}(x)$};
        \draw[loosely dashed] (0.5,1.4) -- (1.2,1.4) node[right] {$F_{2b}(x)$};
        \coordinate (mu+) at (2,1);
        \coordinate (mu-) at (2,0);
        \coordinate (mu--) at (2,0.02);
        \coordinate (z) at (0, 0);
        \coordinate (z--) at (0,0.02);
        \coordinate (b) at (5,1);
        \coordinate (b+) at (5,2.5);
        \coordinate (t+) at (3.25,1);
        \coordinate (t-) at (3.25,0);
        \draw[blue, thick] (z) -- (t-);
        \draw[blue, dotted] (t-) -- (t+);
        \draw[blue, thick] (t+) -- (b);
        \draw[densely dashed] (z--) -- (mu--);
        \draw[densely dashed] (mu--) -- (t+);
        \draw[densely dashed] (t+) -- (b+);
        \draw[loosely dashed] (0,0.02) -- (2,0.02);
        \draw[dotted] (2,0.02) -- (2,1.02);
        \draw[loosely dashed] (2,1.02) -- (5,1.02);
        \draw[dotted] (mu-) -- (mu-) node[below] {$\mu$};
        \draw[dotted] (t-) -- (t-) node[below] {$t$};
        \draw[dotted] (0,0) -- (0,0) node[below] {$0$};
        \draw[dotted] (4.85,0) --(4.85,0) node[below] {$b$};

\end{tikzpicture}
\end{minipage}

\caption{Scenario 1 and the majorizing functions $F_{1a}(x)$ and $F_{1b}(x)$ under scenarios 1a and 1b, respectively. Scenario 2 and the majorizing functions $F_{2a}(x)$ and $F_{2b}(x)$ under scenarios 2a and 2b, respectively.}\label{fig:majors2}
\end{center}
\end{figure}
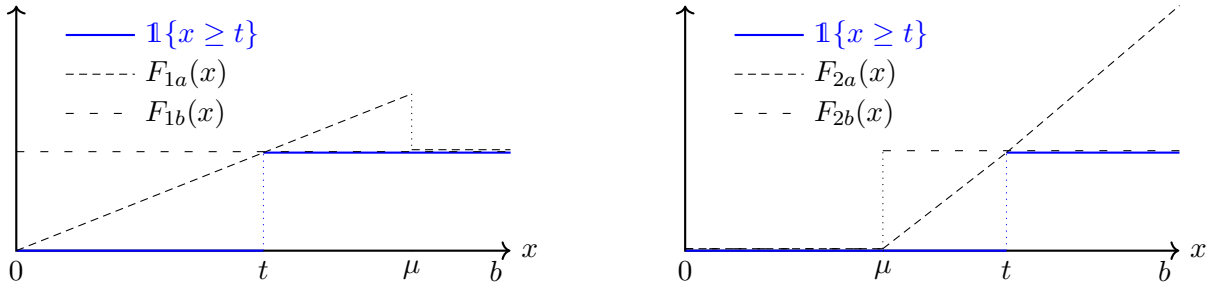

Scenario 1a implies $F(0)=0, \, F(t)=F(\mu)=F(b)=1,$ which gives
\begin{equation}
    \lambda_0=\frac{\mu}{2t}, \quad \lambda_1=\frac{1}{2t}, \quad \lambda_2=-\frac{1}{2t}, \quad \lambda_3=\frac{\mu-t}{t},
\end{equation}
and objective value
\begin{equation}
    \lambda_0+\lambda_1 \mu + \lambda_2 d + \lambda_3 \beta = \frac{(1-\beta)\mu + \beta t}{t}-\frac{d}{2t}.
\end{equation}
Solving the primal problem \eqref{eq:primal2} with probability masses on the points $\{0,t,\mu,b\}$ gives
\begin{equation}
    \int_{x} \1\{x\geq t\}\mathrm{d}\mathbb{P}{(x)}=p_{t}+p_\mu + p_b=\frac{(1-\beta)\mu + \beta t}{t}-\frac{d}{2t}.
\end{equation}
Since primal and dual feasible solutions have the same objective value we have strong duality and hence found the optimal solutions.

Scenario 1b implies that $F(0)=F(t)=F(\mu)=F(b)=1$, and hence $\lambda_0=1,\, \lambda_1=\lambda_2=\lambda_3=0$ with objective value 1. It is clear that the optimal primal objective value is also equal to 1.

Scenario 2a implies $F(0)=F(\mu)=0,\,F(t)=1$ which gives
\begin{equation}
    \lambda_0=-\frac{\mu}{2(t-\mu)}, \quad \lambda_1 =\lambda_2= \frac{1}{2(t-\mu)}, \quad \lambda_3 = 0,
\end{equation}
with objective value
\begin{equation}
    \lambda_0 + \lambda_1 \mu + \lambda_2 d + \lambda_3 \beta = \frac{d}{2(t-\mu)}.
\end{equation}
Solving the optimal probabilities for the primal problem \eqref{eq:primal2} indeed shows that $p_t=\frac{d}{2(t-\mu)}$.

Scenario 2b implies that $F(0)=0, \, F(\mu)=F(t)=F(b)=1$, which gives as the dual feasible solution
\begin{equation}
    \lambda_0=\lambda_1=\lambda_2=0, \quad \lambda_3 = 1,
\end{equation}
and objective value
\begin{equation}
    \lambda_0 + \lambda_1 \mu + \lambda_2 d + \lambda_3 \beta = \beta.
\end{equation}
Solving the optimal probabilities of \eqref{eq:primal2} confirms that $p_0 = (1-\beta)$.

The proof is then completed by looking which scenario prevails on a specific interval, and these intervals can be determined by simply equating the minimum objective values and thereafter solving for $t$ to find $\tau_1$ and $\tau_2$ for, respectively, scenario 1 and scenario 2. 
\end{proof}

\begin{proof}[Proof Theorem~\ref{theorem:betaLB}]
We will use similar arguments as in the proof of Theorem~\ref{theorem:betaUB}, but now we consider a dual problem where we are maximizing a minorizing function. Under $\mathcal{P}_{(\mu,b,d,\beta)}$ ambiguity of the random variable $X$ we now need to solve
\begin{equation}\label{eq:primal3}
\begin{aligned}
&\! \inf_{\mathbb{P}\in \mathcal{M}^+} &  &\int_x \1_{\{x > t\}}{\rm d} \mathbb{P}(x)\\
&\text{s.t.} &      & \int_x {\rm d}\mathbb{P}(x)=1, \ \int_x x{\rm d}\mathbb{P}(x)=\mu, \ \int_x |x-\mu|{\rm d}\mathbb{P}(x)=d,\ \int_x \1_{\{x\geq\mu\}}{\rm d}\mathbb{P}(x)=\beta,   
\end{aligned}
\end{equation}
 which is a semi-infinite linear program with four equality constraints. The dual problem is given by
\begin{equation}\label{eq:dualmaximize}
\begin{aligned}
&\sup_{\lambda_0,\lambda_1,\lambda_2,\lambda_3} &  &\lambda_0 + \lambda_1 \mu+\lambda_2 d+\lambda_3 \beta\\
&\text{s.t.} &      & \1_{\{x > t\}}\geq \lambda_0  +\lambda_1 x+\lambda_2 |x-\mu| + \lambda_3 \1_{\{x\geq \mu\}} \eqqcolon F(x), \ \forall x\in[0,b].
\end{aligned}
\end{equation} 
Note that $F(x)$ has both a 'kink' and a jump discontinuity at $x=\mu$. Additionally, we are to construct the tightest minorant this time. The dual problem has four variables, and hence the tightest minorant touches $\1_{\{x> t\}}$ at four or fewer points. Since $F(x)$ is piecewise linear with a jump discontinuity there are four candidate solutions, which are depicted in Figure~\ref{fig:minors1}. Note that we have a strict inequality inside of the indicator function. When $t\in[0,\mu)$, $F(x)$ touches $\1_{\{x> t\}}$ in $\{t,\mu,b\}$ (scenario 1a) or in $\{0,t,\mu,b\}$ (scenario 1b). When $t\in[\mu,b]$, $F(x)$ touches in $\{0,t,b\}$ (scenario 2a), or $F(x)=0$ and touches in $\{0,\mu,t\}$ (scenario 2b).

Scenario 1a implies $F(t)=0$, $F(\mu)=F(b)=1$, which gives the dual solution
\begin{equation}
    \lambda_0 = \frac{2t - \mu}{2(t-\mu)},\quad \lambda_1 = -\frac{1}{2(t-\mu)}, \quad \lambda_2=\frac{1}{2(t-\mu)}, \quad \lambda_3 = 0,
\end{equation}
and objective value
\begin{equation}
    \lambda_0+\lambda_1\mu+\lambda_2 d+\lambda_3\beta = 1-\frac{d}{2(\mu-t)}.
\end{equation}
Solving the primal problem \eqref{eq:primal3} with probability masses on the points $\{t,\mu,b\}$ gives
\begin{equation}
    \int_{x}\1\{x>t\}\,{\rm d}\mathbb{P}(x)=1-p_t=1-\frac{d}{2(\mu-t)}.
\end{equation}

\begin{figure}[h!]
\begin{center}
\begin{minipage}[b]{0.45\linewidth}
\begin{tikzpicture}[scale=1.3]
\draw [<->,thick] (0,2.5) node (yaxis) [above] {}
        |- (5,0) node (xaxis) [right] {$x$};
        \draw[blue,thick] (0.5,2.2) -- (1.2,2.2) node[right] {$\1\{x> t\}$};
        \draw[densely dashed] (0.5,1.8) -- (1.2,1.8) node[right] {$F_{1a}(x)$};
        \draw[loosely dashed] (0.5,1.4) -- (1.2,1.4) node[right] {$F_{1b}(x)$};
        \coordinate (mu) at (4,1);
        \coordinate (mu-) at (4,0);
        \coordinate (mu--) at (4,0.99);
        \coordinate (z) at (0, 0);
        \coordinate (b) at (5,1);
        \coordinate (t-) at (2.5,0);
        \coordinate (t+) at (2.5,1);
        \draw[blue, thick] (z) -- (t-);
        \draw[blue, dotted] (t-) -- (t+);
        \draw[blue, thick] (t+) -- (b);
        \draw[densely dashed] (1.3,-.7) -- (mu--);
        \draw[densely dashed] (mu--) -- (5,0.99);
        \draw[loosely dashed] (0,-.02) -- (4,-.02);
        \draw[dotted] (4,-.01) -- (mu--);
        \draw[loosely dashed] (4,0.98) -- (5,0.98);
        \draw[dotted] (mu-) -- (mu-)        node[below] {$\mu$};
        \draw[dotted] (t-) -- (t-) node[below] {$t$};
        \draw[dotted] (0,0) -- (0,0) node[below] {$0$};
        \draw[dotted] (4.85,0) --(4.85,0) node[below] {$b$};

\end{tikzpicture}
\end{minipage}
\hfill
\begin{minipage}[b]{0.45\linewidth}
\begin{tikzpicture}[scale=1.3]
\draw [<->,thick] (0,2.5) node (yaxis) [above] {}
        |- (5,0) node (xaxis) [right] {$x$};
        \draw[blue,thick] (0.5,2.2) -- (1.2,2.2) node[right] {$\1\{x> t\}$};
        \draw[densely dashed] (0.5,1.8) -- (1.2,1.8) node[right] {$F_{2a}(x)$};
        \draw[loosely dashed] (0.5,1.4) -- (1.2,1.4) node[right] {$F_{2b}(x)$};
        \coordinate (mu+) at (2,1);
        \coordinate (mu-) at (2,0);
        \coordinate (mu--) at (2,-0.01);
        \coordinate (z) at (0, 0);
        \coordinate (z--) at (0,-0.01);
        \coordinate (b) at (5,1);
        \coordinate (b+) at (5,2.5);
        \coordinate (t+) at (3.25,1);
        \coordinate (t-) at (3.25,0);
        \draw[blue, thick] (z) -- (t-);
        \draw[blue, dotted] (t-) -- (t+);
        \draw[blue, thick] (t+) -- (b);
        \draw[densely dashed] (z--) -- (mu--);
        \draw[dotted] (mu--) -- (2,-0.71);
        \draw[densely dashed] (2,-0.71) -- (t-);
        \draw[densely dashed] (t-) -- (b);
        \draw[loosely dashed] (0,-0.02) -- (5,-0.02);
        \draw[dotted] (mu-) -- (mu-) node[below] {$\mu$};
        \draw[dotted] (t-) -- (t-) node[below] {$t$};
        \draw[dotted] (0,0) -- (0,0) node[below] {$0$};
        \draw[dotted] (4.85,0) --(4.85,0) node[below] {$b$};

\end{tikzpicture}
\end{minipage}

\caption{Scenario 1 and the minorizing functions $F_{1a}(x)$ and $F_{1b}(x)$ under scenarios 1a and 1b, respectively. Scenario 2 and the minorizing functions $F_{2a}(x)$ and $F_{2b}(x)$ under scenarios 2a and 2b, respectively.}\label{fig:minors1}
\end{center}
\end{figure}

Scenario 1b implies that $F(0)=F(t)=0$, $F(\mu)=F(b)=1$, and hence $\lambda_0=\lambda_1=\lambda_2=0$, $\lambda_3=1$ with objective value $\beta$. Now solving the primal problem with probability masses on $\{0,t,\mu,b\}$ gives us
\begin{equation}
        \int_{x}\1\{x>t\}\,{\rm d}\mathbb{P}(x)=p_\mu + p_b=\beta.
\end{equation}
Scenario 2a implies that $F(0)=F(t)=0$ and $F(b)=1$, which results in 
\begin{equation}
    \lambda_0=\frac{\mu}{2(t-b)},\quad \lambda_1=\lambda_2=\frac{1}{2(b-t)},\quad \lambda_3=-\frac{(\mu-t)}{(b-t)},
\end{equation}
with objective value
\begin{equation}
    \lambda_0+\lambda_1\mu + \lambda_2 d+\lambda_3\beta = \frac{\beta(\mu-t)}{(b-t)}+\frac{d}{2(b-t)}.
\end{equation}
Indeed, solving the primal problem with probability masses on $\{0,t,b\}$ gives $p_b = \frac{\beta(\mu-t)}{(b-t)}+\frac{d}{2(b-t)}$.

Scenario 2b implies that $F(0)=F(\mu)=F(t)=F(b)=0$, which gives the dual feasible solution
\begin{equation}
    \lambda_0=\lambda_1=\lambda_2=\lambda_3=0, 
\end{equation}
with objective value 0. All probability mass is placed on points that are less than or equal to $t$. Hence, the optimal primal objective value is also equal to 0.

Finally, the proof is completed by inspecting which scenario prevails on a specific interval. These intervals are determined by equating the maximum objective values and solving these equations with respect to $t$ to find $\tau_1$ and $\tau_2$ for scenario 1 and 2, respectively.

\end{proof}

\section{Comparison with tight bounds for ${(\mu,b,\sigma)}$ ambiguity}\label{app: deschepper}

Next to the comparison with Cantelli's inequality in Section~\ref{sec:compare}, we also look at the tight bounds for the $(\mu,b,\sigma)$-ambiguity set. \cite{schepper1995general} provide expressions for these bounds. The upper bound is given by
\begin{equation}\label{eq: supschepper}
\sup\limits_{\bP\in\cP_{(\mu,b,\sigma)}}\bP(X\geq t)=\begin{cases}
     1, \quad &  t\in\Big[0,\mu-\frac{\sigma^2}{b-\mu}\Big], \\
     1-\frac{\sigma^2+(b-\mu)(t-\mu)}{bt}, \quad &  t\in\Big[\mu-\frac{\sigma^2}{b-\mu},\mu+\frac{\sigma^2}{\mu}\Big], \\
     \frac{\sigma^2}{\sigma^2+(t-\mu)^2}, &  t\in\Big[\mu+\frac{\sigma^2}{\mu},b\Big].
\end{cases}
\end{equation}
The lower bound equals
\begin{equation}\label{eq: infschepper}
\inf\limits_{\bP\in\cP_{(\mu,b,\sigma)}}\bP(X\geq t)=\begin{cases}
     \frac{(\mu-t)^2}{(\mu-t)^2+\sigma^2}, \quad &  t\in\Big[0,\mu-\frac{\sigma^2}{b-\mu}\Big], \\
     \frac{\sigma^2+\mu(\mu-t)}{b(b-t)}, \quad &  t\in\Big[\mu-\frac{\sigma^2}{b-\mu},\mu+\frac{\sigma^2}{\mu}\Big], \\
     0, &  t\in\Big[\mu+\frac{\sigma^2}{\mu},b\Big].
\end{cases}
\end{equation}

Comparing these bounds with their MAD equivalents is again not straightforward, and we will be using a similar numerical example to compare our results with those of \cite{schepper1995general}. We use the following parameter setting: $a=0$, $\mu=1$, $b=2$, $d=1/4$. Furthermore, we consider three values for $\sigma$: $\sigma=d=1/4$, $\sigma=1/3$, and $\sigma=\sqrt{db/2}=1/2$.

\begin{figure}[h!]
\begin{center}
\begin{tikzpicture}
\begin{axis}[
	xlabel={\(t\)},
	xmin=0.5, xmax=2,
	ymin=0, ymax=1.1,
    ylabel={\(\sup\bP\left(X \geq t\right)\)},
    ylabel style={yshift=0.2cm},
    xticklabel style={
    	/pgf/number format/precision=4,
    	/pgf/number format/fixed,
	},
	yticklabel style={/pgf/number format/precision=4, /pgf/number format/fixed},
    label style={font=\small},
    ticklabel style={font=\footnotesize},
    legend style={draw=none, font=\footnotesize},
    legend cell align={left},
    legend pos = outer north east
]
\addplot[blue!70!black, line width=0.75pt] table {Figure_Data/Schepper/meanMAD.dat};
\addlegendentry{Theorem \ref{theorem:main}};

\addplot[green!70!black, line width=0.75pt] table {Figure_Data/Schepper/meanVariance1.dat};
\addlegendentry{$(\mu,b,\sigma)$ upper bound with \(\sigma = \frac{1}{4}\)};

\addplot[orange!70!black, line width=0.75pt] table {Figure_Data/Schepper/meanVariance2.dat};
\addlegendentry{$(\mu,b,\sigma)$ upper bound with \(\sigma = \frac{1}{3}\)};

\addplot[red!70!black, line width=0.75pt] table {Figure_Data/Schepper/meanVariance3.dat};
\addlegendentry{$(\mu,b,\sigma)$ upper bound with \(\sigma = \frac{1}{2}\)};

\end{axis}
\end{tikzpicture}
\end{center}
\caption{A comparison of the mean-MAD upper bound with the upper bound of \cite{schepper1995general} for three different values of \(\sigma\) with the parameter chosen as follows: $a=0$, \(\mu = 1\), \(b = 2\) and \(d = \frac{1}{4}\). \label{fig: ub comparison schepper}}
\end{figure}
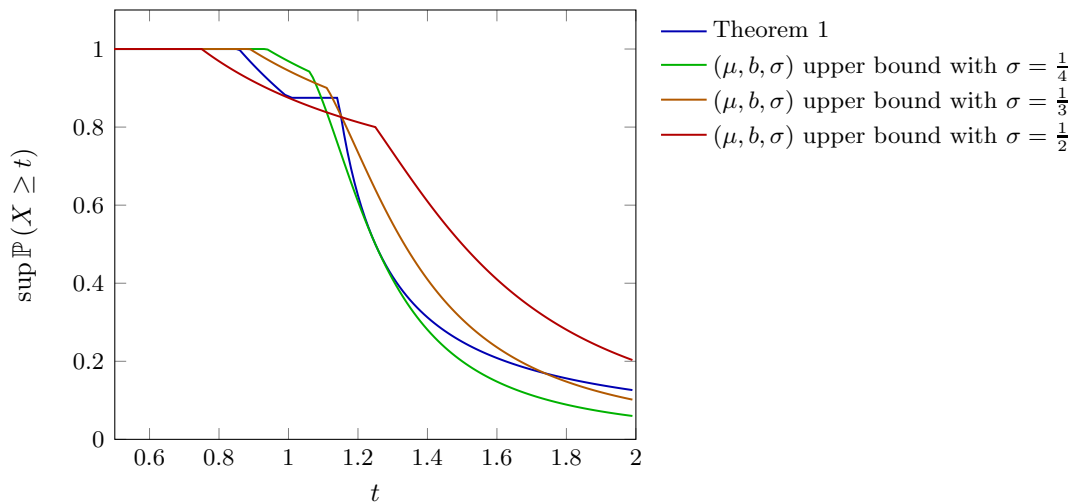

Figure~\ref{fig: ub comparison schepper} shows the upper bounds for mean-MAD ambiguity and the ${(\mu,b,\sigma)}$-ambiguity sets. The difference with Cantelli's inequality essentially lies in the behavior of the bound near the mean $\mu$. Expression~\eqref{eq: supschepper} sharpens the bounds of the tail probability for $t\in[\mu-\sigma^2/(b-\mu),\mu+\sigma^2/\mu]$ by using information about the upper bound of the support. Note that Cantelli's inequality and the tight upper bound are equivalent in the tail. Another interesting observation is the equivalence of the mean-variance and mean-MAD bound for $t=\mu$ and $\sigma=\sqrt{db/2}$. Figure~\ref{fig: lb comparison schepper} paints a similar picture for the lower bounds.

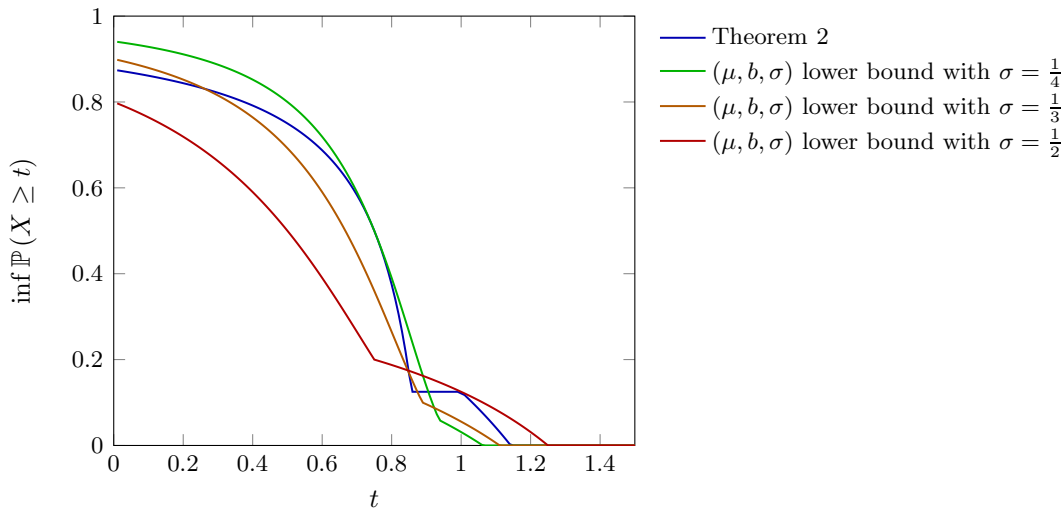
\begin{figure}[h!]
\begin{center}
\begin{tikzpicture}
\begin{axis}[
	xlabel={\(t\)},
	xmin=0, xmax=1.5,
	ymin=0, ymax=1,
    ylabel={\(\inf\bP\left(X \geq t\right)\)},
    ylabel style={yshift=0.2cm},
    xticklabel style={
    	/pgf/number format/precision=4,
    	/pgf/number format/fixed,
	},
	yticklabel style={/pgf/number format/precision=4, /pgf/number format/fixed},
    label style={font=\small},
    ticklabel style={font=\footnotesize},
    legend style={draw=none, font=\footnotesize},
    legend cell align={left},
    legend pos = outer north east
]

\addplot[blue!70!black, line width=0.75pt] table {Figure_Data/Schepper/meanMADL.dat};
\addlegendentry{Theorem \ref{theorem:inf}};

\addplot[green!70!black, line width=0.75pt] table {Figure_Data/Schepper/meanVarianceL1.dat};
\addlegendentry{$(\mu,b,\sigma)$ lower bound with \(\sigma = \frac{1}{4}\)};

\addplot[orange!70!black, line width=0.75pt] table {Figure_Data/Schepper/meanVarianceL2.dat};
\addlegendentry{$(\mu,b,\sigma)$ lower bound with \(\sigma = \frac{1}{3}\)};

\addplot[red!70!black, line width=0.75pt] table {Figure_Data/Schepper/meanVarianceL3.dat};
\addlegendentry{$(\mu,b,\sigma)$ lower bound with \(\sigma = \frac{1}{2}\)};

\end{axis}
\end{tikzpicture}
\end{center}
\caption{A comparison of the mean-MAD lower bound with the lower bound of \cite{schepper1995general} for three different values of \(\sigma\) with the parameters chosen as follows: $a=0$, \(\mu = 1\), \(b = 2\) and \(d = \frac{1}{4}\). \label{fig: lb comparison schepper}}
\end{figure}

\section{Newsvendor model with skewness information (EC)}\label{newsskew}
We now consider the newsvendor problem under restricted ambiguity where we consider demand distributions with given mean $\mu$, MAD $d$, skewness information $\bP(D\geq\mu)$, and bounded support $[0,b]$. The following problem is the mean-MAD counterpart of the mean-variance-semivariance model discussed in the work of \cite{natarajan2007mean}:
\begin{equation}
    \max_{q}\inf\limits_{\bP\in\cP_{(\mu,b,d,\beta)}}\mathbb{E}_{\bP}[\pi(q,D)],
\end{equation}
where $\beta$ adopts the role of the semivariance to model skewness information. Instead of solving this problem directly, we apply the robust Chebyshev bounds with skewness information to the first-order condition of the newsvendor problem. Thus, we will use the results from Theorem~\ref{theorem:betaUB} and Theorem~\ref{theorem:betaLB} to bound the tail distribution of the demand $D$. Tight lower and upper bounds for the optimal order quantity $q^*$ follow from $\inf_{\bP\in\cP_{(\mu,b,d,\beta)}}\bP(D>q)$ and $\sup_{\bP\in\cP_{(\mu,b,d,\beta)}}\bP(D>q)$, respectively. The following result provides an interval that contains the optimal order quantity $q^*$.

\begin{theorem}[Order quantity bounds under mean-MAD-$\beta$ ambiguity]\label{newsbetathm}  Suppose the newsvendor knows the mean $\mu$, the mean absolute deviation $d$, the probability $\bP(D\geq \mu)=\beta$ and the upper 
bound $b$ of the demand distribution $\mathbb{P}(D\leq q)$. The optimal order quantity $q^*$ that solves $\max_q\mathbb{E}_\mathbb{P}[\pi(q, D)]$ is then contained in the interval $[q^L,q^U]$ with
\begin{align}\label{optimalqcasesspecial}
    [q^L,q^U]&=
    \begin{cases}
    \ \Big[0,\, \frac{(1-\beta)\mu-d/2}{(1-\eta-\beta)}\Big],  \quad & {\rm if }\ \eta < \frac{d}{2\mu}, \\
    \ \Big[\mu-\frac{d}{2\eta},\, \mu \Big],  \quad & {\rm if }\ \frac{d}{2\mu}\leq \eta< 1-\beta, \\
    \ \mu, \quad & {\rm if }\ \eta=1-\beta,\\
    \ \Big[\mu,\, \mu+\frac{d}{2(1-\eta)}\Big],  \quad & {\rm if }\ (1-\beta)< \eta <  1-\frac{d}{2(b-\mu)}, \\
    \ \Big[\frac{b (1-\eta)-\beta\mu-d/2}{(1-\eta-\beta)},\, b\Big], & {\rm if }\ \eta\geq 1-\frac{d}{2(b-\mu)}.
    \end{cases}
\end{align}
\end{theorem}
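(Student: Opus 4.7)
The plan is to apply the tight tail probability bounds from Theorems~\ref{theorem:betaUB} and \ref{theorem:betaLB} directly to the critical-quantile characterization \eqref{critqua}, in exact analogy with the (implicit) derivation of Theorem~\ref{newsmadthm}. For each fixed $\mathbb{P}\in\cP_{(\mu,b,d,\beta)}$ the newsvendor optimum is the $\eta$-quantile $q^*(\mathbb{P})=\inf\{q:\mathbb{P}(D\leq q)\geq \eta\}$, i.e.\ the smallest $q$ with $\mathbb{P}(D>q)\leq 1-\eta$. The tight range of $q^*(\mathbb{P})$ over $\mathbb{P}\in\cP_{(\mu,b,d,\beta)}$ is therefore
\begin{equation*}
q^L=\inf\{q\in[0,b]:\inf_{\mathbb{P}\in\cP_{(\mu,b,d,\beta)}}\mathbb{P}(D>q)<1-\eta\},\quad
q^U=\sup\{q\in[0,b]:\sup_{\mathbb{P}\in\cP_{(\mu,b,d,\beta)}}\mathbb{P}(D\geq q)>1-\eta\},
\end{equation*}
so the proof reduces to inverting these two monotone functions of $q$.

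First I will record the shape of the two bounds obtained from Theorems~\ref{theorem:betaUB} and \ref{theorem:betaLB}. The upper-envelope $S_\geq(q):=\sup\mathbb{P}(D\geq q)$ equals $1$ on $[0,\tau_1]$, is the decreasing function $\beta+[(1-\beta)\mu-d/2]/q$ on $[\tau_1,\mu)$, jumps down to $\beta$ at $q=\mu$, remains constant on $[\mu,\tau_2]$, and decreases as $d/[2(q-\mu)]$ on $[\tau_2,b]$. The lower-envelope $I_>(q):=\inf\mathbb{P}(D>q)$ mirrors this shape: it decreases from $1-d/(2\mu)$ to $\beta$ on $[0,\tau_1]$, is constant at $\beta$ on $[\tau_1,\mu)$, jumps down to $d/[2(b-\mu)]$ at $q=\mu$, decreases to $0$ on $[\mu,\tau_2]$, and is $0$ on $[\tau_2,b]$.

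Next I will sort the value $1-\eta$ against the three break levels $\{1-d/(2\mu),\,\beta,\,d/(2(b-\mu))\}$ and, within each regime, solve the linear equations $S_\geq(q^U)=1-\eta$ and $I_>(q^L)=1-\eta$ in the piece where the equation admits a solution. This yields exactly five regimes, separated by the critical values $\eta=d/(2\mu)$, $\eta=1-\beta$, and $\eta=1-d/(2(b-\mu))$, and the closed-form formulas displayed in \eqref{optimalqcasesspecial} follow by direct algebra. At the adjacent regime boundaries $\eta=d/(2\mu)$ and $\eta=1-d/(2(b-\mu))$ the two candidate formulas coincide (e.g.\ at $\eta=d/(2\mu)$ one checks that $[(1-\beta)\mu-d/2]/[1-\eta-\beta]=\mu$), so the bounds glue continuously.

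The hard point is the behaviour at $\eta=1-\beta$. Here $1-\eta$ equals $\beta$, which is precisely the value that both $S_\geq$ and $I_>$ skip in their jumps at $q=\mu$: $S_\geq$ is $>\beta$ on $[0,\mu)$ but only $=\beta$ on $[\mu,\tau_2]$, while $I_>$ is $=\beta$ on $[\tau_1,\mu)$ and $<\beta$ on $[\mu,\tau_2]$. The strict inequality convention chosen in the definitions of $q^L$ and $q^U$ is what collapses the interval to the single point $\mu$ in this regime (and is also what enforces $q^U=\mu$ throughout $\eta\in[d/(2\mu),1-\beta)$ and $q^L=\mu$ throughout $\eta\in(1-\beta,1-d/(2(b-\mu)))$). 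With this subtlety handled, all remaining verifications reduce to checking that each solved $q$ indeed lies in the claimed piece and equals the stated expression.
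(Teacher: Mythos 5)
Your proposal is correct and takes exactly the route the paper intends: the paper supplies no explicit proof of Theorem~\ref{newsbetathm}, only the recipe of applying Theorems~\ref{theorem:betaUB} and~\ref{theorem:betaLB} to the critical-quantile condition \eqref{critqua} and inverting the two monotone envelopes piece by piece, which is precisely what you carry out, and your closed-form inversions, regime boundaries $\eta=d/(2\mu)$, $\eta=1-\beta$, $\eta=1-d/(2(b-\mu))$, and continuity checks at the gluing points all verify. The one delicate point, which you correctly isolate, is the case $\eta=1-\beta$: the collapse to the single point $\mu$ rests on reading $[q^L,q^U]$ as the minimal interval guaranteed to contain \emph{an} optimal order quantity (there $\mu$ is always optimal, since $\mathbb{P}(D<\mu)=1-\beta=\eta$ for every distribution in the ambiguity set), rather than on the literal smallest-quantile convention of \eqref{critqua}, under which certain extremal four-point distributions attain their $\eta$-quantile already at $\tau_1<\mu$ — so your ``strict inequality convention'' is the right formalization of what the theorem asserts.
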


This result provides a robust policy that models the uncertainty captured by the ambiguity set $\cP_{(\mu,b,d,\beta)}$. Obviously, ordering the mean is optimal if $\eta=1-\beta$. The lower bound $q^L$ relates to the worst-case demand distribution. Similar to the mean-MAD-range case, $q^L$ is larger than $\mu$ when the profit margin $\eta$ exceeds $1-d/(2(b-\mu))$. Hence, an interesting observation is that the skewness information does not influence the point at which we order more than the mean. This can be contrasted with the results of \cite{natarajan2007mean}. These authors show for $\cP_{(\mu,\sigma,s)}$ ambiguity that the order quantity is greater than $\mu$ if $\eta>\frac12(1+s)$, where $s$ is the normalized semivariance. 

Table~\ref{res:newstable2} shows that the bounded support $[0,b]$ again influences the intervals for low and high profit margins. The intervals in this section, however, are sharper than the ones found in Table~\ref{res:newstable}, which is a consequence of the additional information regarding the skewness of the demand distribution.


\begin{table}[h!]
\begin{center}
\begin{tabular}{rrrrr}\hline
{ $\eta$}     & { $b=10$}           & {$b=15$}           & { $b=20$}            & { $b=\infty$}           \\ \hline
0.01 & $[0.00, 3.57]$  & $[0.00, 3.57]$  & $[0.00, 3.57]$  & $[0, 3.57]$  \\
0.1  & $[0.00, 4.38]$  & $[0.00, 4.38]$  & $[0.00, 4.38]$  & $[0.00, 4.38]$  \\
0.2  & $[ 1.25,5.00]$  & $[ 1.25,5.00]$  & $[ 1.25,5.00]$  & $[ 1.25,5.00]$  \\
0.4  & $[ 3.13,5.00]$  & $[ 3.13,5.00]$  & $[ 3.13,5.00]$  & $[ 3.13,5.00]$  \\
0.5  & 5.00            & 5.00             & 5.00              & 5.00              \\
0.7  & $[5.00, 7.50]$  & $[5.00, 7.50]$  & $[5.00, 7.50]$  & $[5.00, 7.50]$  \\
0.9  & $[ 5.63,10.00]$ & $[5.00, 12.50]$ & $[5.00, 12.50]$ & $[5.00, 12.50]$ \\
0.95 & $[ 6.11,10.00]$ & $[ 5.56,15.00]$ & $[ 5.00,20.00]$ & $[5.00, 20.00]$ \\
0.99 & $[ 6.43,10.00]$ & $[ 6.33,15.00]$ & $[ 6.22,20.00]$ & $[5.00, 80.00]$   \\ \hline   
\end{tabular}
\end{center}
\caption{The intervals $[q^L,q^U]$ for mean-MAD-$\beta$ ambiguity with $\mu=5$, $d=1.5$, $\beta=0.5$ and various profit margins $\eta$.}\label{res:newstable2}
\end{table}


\section{Upper bound for retention function} \label{EC:retention}

\begin{proposition}\label{prop: stoplossceding}
The worst-case expected claim payment of the direct insurer as a function of the retention limit $z$ is given by
\begin{equation}\label{eq: cedentclaim}
    \sup\limits_{\mathbb{P}\in\mathcal{P}_{(\mu,b,d)}}\mathbb{E}_{\mathbb{P}}[\psi(z,S)]=\begin{cases}
    \ z, \quad & {\rm if }\ z\in[0,\tau_1],\\
    \ \mu - \frac{d(b-z)}{2(b-\mu)}, \quad & {\rm if }\ z\in[\tau_1,\mu],\\
    \ z(1-\frac{d}{2\mu} ), \quad & {\rm if }\ z\in[\mu,\tau_2],\\
    \ \mu, &{\rm if }\ z\in[\tau_2,b],
    \end{cases}
\end{equation}
where the values of $\tau_1$ and $\tau_2$ are given by
\begin{equation*}
    \tau_1=\mu-\frac{d(b-\mu)}{2(b-\mu)-d},\quad \tau_2=\mu+\frac{d\mu}{2\mu-d}.
\end{equation*}

\end{proposition}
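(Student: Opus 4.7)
The plan is to mirror the primal-dual machinery used in Theorem~\ref{theorem:main}, replacing the indicator $\1\{x\geq t\}$ by the concave, piecewise-linear target $\psi(z,x)=\min\{x,z\}$. Writing the primal
\[
\sup_{\bP\in\mathcal{M}^+}\Bigl\{\int\min\{x,z\}\,{\rm d}\bP(x) \;:\; \int{\rm d}\bP=1,\ \int x\,{\rm d}\bP=\mu,\ \int|x-\mu|\,{\rm d}\bP=d\Bigr\},
\]
the dual becomes
\[
\inf_{\lambda_0,\lambda_1,\lambda_2}\bigl\{\lambda_0+\lambda_1\mu+\lambda_2 d\bigr\} \ \text{ s.t. }\ F(x):=\lambda_0+\lambda_1 x+\lambda_2|x-\mu|\geq \min\{x,z\}\ \ \forall x\in[0,b],
\]
so a piecewise-linear majorant with a kink at $x=\mu$ must dominate a piecewise-linear target with a kink at $x=z$. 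I would split on $z\leq\mu$ and $z\geq\mu$, and within each case identify two candidate contact configurations, yielding four scenarios analogous to those in Figure~\ref{fig:majors1}.

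For $z\leq\mu$ I would consider: (i) the trivial constant majorant $F\equiv z$ (so $\lambda_0=z,\lambda_1=\lambda_2=0$) with dual value $z$; and (ii) the majorant with $F(0)=0$, $F(z)=F(b)=z$, which solves to $\lambda_2=-\tfrac{b-z}{2(b-\mu)}$, $\lambda_1=1+\lambda_2$, $\lambda_0=-\lambda_2\mu$, and dual value $\mu-\tfrac{d(b-z)}{2(b-\mu)}$. A short check confirms $F(x)=x$ on $[0,\mu]$ and $F$ linear from $\mu$ to $z$ on $[\mu,b]$, so the constraint $F\geq\min\{x,z\}$ indeed holds on all of $[0,b]$. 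For $z\geq\mu$ I would consider: (iii) the majorant with $F(0)=0$, $F(z)=F(b)=z$, which solves to $\lambda_2=-\tfrac{z}{2\mu}$, $\lambda_1=-\lambda_2$, $\lambda_0=\tfrac{z}{2}$, with dual value $z\bigl(1-\tfrac{d}{2\mu}\bigr)$; and (iv) the majorant $F(x)=x$ (i.e.\ $\lambda_1=1,\lambda_0=\lambda_2=0$), which trivially dominates $\min\{x,z\}$ and gives dual value $\mu$.

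To close the duality gap I would exhibit primal-feasible measures attaining each dual value, reusing the worst-case distributions of Proposition~\ref{prop: worst-cases}: for $z\leq\tau_1$ any distribution supported in $[z,b]$ gives $\psi(z,S)\equiv z$ a.s.; for $z\in[\tau_1,\mu]$ the three-point distribution on $\{0,z,b\}$ from case~(ii) of Proposition~\ref{prop: worst-cases} (with $t$ replaced by $z$) yields $\mu-\tfrac{d(b-z)}{2(b-\mu)}$; for $z\in[\mu,\tau_2]$ the three-point distribution on $\{0,z,b\}$ from case~(iii) yields $z\bigl(1-\tfrac{d}{2\mu}\bigr)$; and for $z\in[\tau_2,b]$ any distribution supported in $[0,\mu]\cup\{z\}$ from case~(iv) makes $\psi(z,S)=S$ a.s., so $\bE[\psi]=\mu$. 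The thresholds $\tau_1,\tau_2$ arise exactly as in Theorem~\ref{theorem:main}: $\tau_1$ is where the probability mass $p_0$ in scenario~(ii) becomes zero, and $\tau_2$ is where the dual values in scenarios~(iii) and~(iv) cross.

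The main obstacle will be verifying pointwise majorization $F(x)\geq\min\{x,z\}$ on the entire interval $[0,b]$ for each candidate configuration—not merely at the contact points—and checking that the four scenarios exhaust all tight configurations so that the envelope of their dual values is truly optimal. A convenient short-cut worth trying would be to exploit the identity $\min\{S,z\}=S-(S-z)^{+}$ and reduce the whole computation to a stop-loss transform bound under $\cP_{(\mu,b,d)}$, which parallels the tail-probability analysis of Theorem~\ref{theorem:main} one step removed and would also clarify the structural similarity between \eqref{eq: cedentclaim} and \eqref{eq: worst-case prob}.
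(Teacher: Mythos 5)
Your proposal is correct, and it takes a genuinely different route from the paper. The paper's proof is much shorter: it uses the decomposition $\psi(z,S)=\min\{S,z\}=S-\max\{S-z,0\}$, so that $\sup_{\bP}\bE_\bP[\psi(z,S)]=\mu-\inf_{\bP}\bE_\bP[\max\{S-z,0\}]$, and then invokes the known closed form for the infimum of the expectation of a \emph{convex} function under mean--MAD ambiguity (Ben-Tal--Hochman, as used by Postek et al.), which reduces the whole problem to minimizing a one-dimensional convex piecewise-linear function of a scalar $\theta$; the four cases of \eqref{eq: cedentclaim} drop out of that scalar minimization. This is exactly the ``short-cut'' you mention in your last sentence but do not execute. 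Your main route instead redoes the primal--dual analysis of Theorem~\ref{theorem:main} from scratch with the concave target $\min\{x,z\}$, and your four majorant configurations, the multiplier values, the dual objective values, and the matching primal certificates all check out (e.g.\ in scenario (ii) one indeed gets $F(x)=x$ on $[0,\mu]$ and $F$ decreasing from $\mu$ to $z$ on $[\mu,b]$, and the three-point distribution on $\{0,z,b\}$ gives $z(p_z+p_b)=\mu-\tfrac{d(b-z)}{2(b-\mu)}$); the thresholds $\tau_1,\tau_2$ come out identically. One remark: the ``main obstacle'' you flag about the four scenarios exhausting all tight configurations is not actually needed --- since for each interval of $z$ you exhibit a dual-feasible majorant and a primal-feasible distribution with equal objective values, weak duality already certifies optimality, so only the pointwise majorization checks (which you essentially carry out) are required. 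The trade-off is that the paper's reduction is economical and reuses existing machinery, while your direct argument is self-contained, parallels the proofs of Theorems~\ref{theorem:main} and~\ref{th:stoplossrein}, and makes the extremal distributions for each regime explicit.
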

\begin{proof}
First, note that
\begin{equation*}
    \psi(z,S)=\min\{S,z\}=S-\max\{S-z,0\},
\end{equation*}
and hence our problem boils down to solving
\begin{equation}
    \sup\limits_{\mathbb{P}\in\mathcal{P}_{(\mu,b,d)}}\mathbb{E}_{\mathbb{P}}[\psi(z,S)] = \mu - \inf\limits_{\mathbb{P}\in\mathcal{P}_{(\mu,b,d)}}\mathbb{E}_{\mathbb{P}}[\max\{S-z,0\}].
\end{equation}
The second term is convex in the uncertain parameter and therefore we can apply the lower bounds discussed by \cite{postek2018robust}, that is, we solve the optimization problem
\begin{equation}\label{eq: posteksolve}
    \inf\limits_{\mathbb{P}\in\mathcal{P}_{(\mu,b,d)}}\mathbb{E}_{\mathbb{P}}[\max\{S-z,0\}]=\min\limits_{\frac{d}{2(b-\mu)}\leq\theta\leq 1-\frac{d}{2\mu}}\Big\{\theta \max\{\mu+\frac{d}{2\theta}-z,0\} +(1-\theta) \max\{\mu-\frac{d}{2(1-\theta)}-z,0\} \Big\},
\end{equation}
which is convex and piecewise linear in the optimization variable $\theta$. Hence, one can find the optimal solution value that depends on the retention limit $z$. Solving problem \eqref{eq: posteksolve} and subtracting the optimal value from $\mu$ results in the four cases mentioned in \eqref{eq: cedentclaim}.
\end{proof}

\section{Proof of Theorem~\ref{mainmadthm} (EC)}\label{EC:proofprice}

We shall now solve \eqref{mpp}. 

Let us make some observations about the functions $G(t):=\inf_{\mathbb{P} \in \mathcal{P}_{(\mu,d)}}\prob{B\geq t}$  and 
$F(r):=r G(r)$. The function $G(t)$ starts in $G(0)=(2\mu-d)/(2\mu)$, decays until $\tau_1$, remains flat for $t\in[\tau_1,\mu]$, decays until reaching zero at $t=\tau_2$, and then remains zero. The function $F(r)$ starts in $0$, is concave until $\tau_1$,  increases linearly for $t\in[\tau_1,\mu]$, and then remains concave until reaching zero. This implies that the maximum of $F(r)$ is the maximum of the first concave part, the point $F(\mu)$, or the maximum of the second concave part. 

The first concave part is given by the function 
$$
F_1(r):=r-\frac{dr}{2(\mu-r)}  \ {\rm for} \  r\in[0,\tau_1],
$$
for which $F_1'(r)=0$ gives
$$
r_1=\mu-\sqrt{\frac{d\mu}{2}}
$$
and $F_1(r_1)=\mu+{d}/2-\sqrt{2d\mu}$. This value should be compared with $F(\mu)=\frac{d\mu}{2(b-\mu)}$, and in fact solving for $d$ for which $F_1(r_1)=F(\mu)$ gives 
$$
d_1=\frac{2 b \mu  (b-\mu )-4 \sqrt{\mu ^3 (b-\mu )^3}}{(b-2 \mu )^2}.
$$
The second  concave part is given by the function 
$$
F_2(r):=r \left(\frac{\mu -r}{b-r}+\frac{d r}{2 \mu  (b-r)}\right)
  \ {\rm for} \  r\in[\mu,\tau_2],
$$
for which $F_2'(r)=0$ gives
$$
r_2
=b -\frac{\sqrt{b  (2 \mu -d) (2 \mu  (b -\mu )-b  d)}}{2 \mu -d}
$$
and $$F_2(r_2)=\frac{2 b\mu -b  d-\mu ^2 -\sqrt{b  (2 \mu -d) (2 \mu  (b -\mu )-b  d)}}{\mu }.$$
Solving for $d$ for which $F_2(r_2)=F(\mu)$ gives 
$$
d_2=
\frac{2 \left(b \mu -\mu ^2\right)}{2 b-\mu }.
$$
Upon reflection, $d_2$ must be the point where the right-derivative of $F_2(r)$  turns positive, which indeed is the case. 
It can be shown that $d_2\leq d_1$ for $\mu\in[0,b/5]$ and $d_2\geq d_1$ for $\mu\in[b/5,b]$; see Figure \ref{fig:ddd}. 

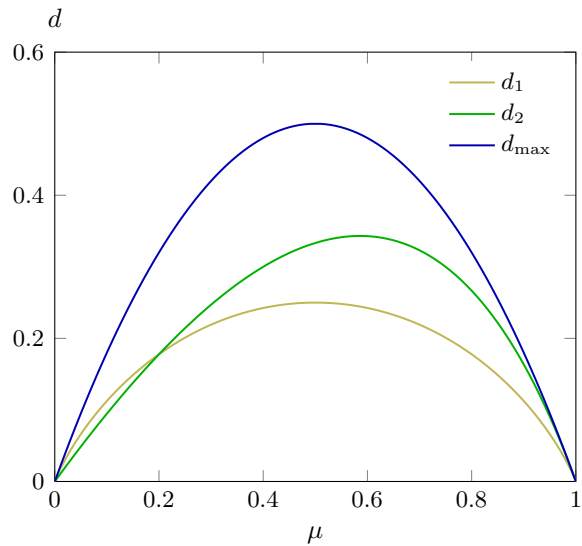
\begin{figure}[h!]
\begin{center}
\begin{tikzpicture}
\begin{axis}[
	xlabel={$\mu$},
	xmin=0, xmax=1,
	ymin=0, ymax=0.6,
    ylabel={$d$},
    label style={font=\small},
    ticklabel style={font=\footnotesize},
    xticklabel style={
    	/pgf/number format/precision=4,
    	/pgf/number format/fixed,
	},
    scaled x ticks=false,
    y label style={at={(current axis.above origin)},rotate=-90,anchor=south,yshift=0.2cm},
    legend style={draw=none, font=\footnotesize},
    legend cell align={left}
]
\addplot[yellow!70!black, line width=0.75pt] table {Figure_Data/pricing/d1.dat};
\addlegendentry{$d_1$};
\addplot[green!70!black, line width=0.75pt] table {Figure_Data/pricing/d2.dat};
\addlegendentry{$d_2$};
\addplot[blue!70!black, line width=0.75pt] table {Figure_Data/pricing/dmax.dat};
\addlegendentry{$d_{\text{max}}$};
\end{axis}
\end{tikzpicture}
\end{center}
\caption{The functions $d_1=d_1(\mu,b), \, d_2=d_2(\mu,b),$ and $d_{\text{max}}=d_{\text{max}}(\mu,b)=\frac{2(b-\mu)\mu}{b}$ for $b=1$.   
}\label{fig:ddd}
\end{figure}

First consider the case $d_1\leq d_2\leq d_{\rm max}$, hence assuming $b\leq 5 \mu$. For $d\in[0,d_1]$, the maximum of $F(r)$ is located at $r_1$.   For $d\in[d_1,d_2]$, the maximum of $F(r)$ is at $r=\mu$, because $F(\mu)\geq F(r_1)$ and the function $F(r)$ will not increase for $r\geq \mu$. For $d\in[d_2,d_{\rm max}]$, the maximum of $F(r)$ is located at $r_2$, because $F(\mu)\leq F(r_1)$ and the function $F(r)$ still increases after $r=\mu$ until $r=r_2$. Figure \ref{fig:plotsF} illustrates these three scenarios by plotting $F(r)$ for various values of $d$.   

Then consider the case  $d_2\leq d_1\leq d_{\rm max}$, hence assuming $b\geq 5 \mu$. 
Now $r=\mu$ is no longer a candidate optimizer, because $F(r)$ viewed as a function of $d$, will become increasing at $r=\mu$ before $F(\mu)$ beats $F(r_1)$. Therefore, the maximum will be in either $r_1$ or $r_2$. It will be $r_1$ when $F(r_1)> F(r_2)$ and vice versa. Solving for $d$ for which $F(r_1)= F(r_2)$ is computationally tractable, but does not lead to a closed-form solution. See Figure \ref{fig:plotsFcase2} for an example of this case.

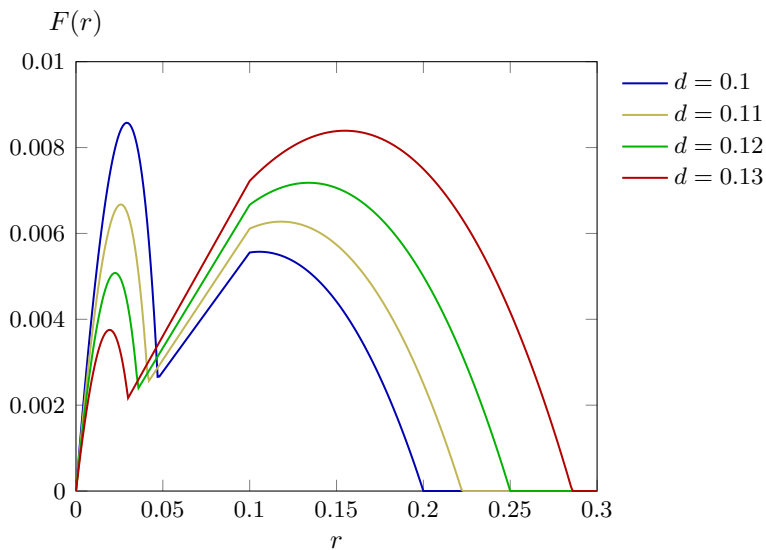
\begin{figure}[h!]
\begin{center}
\begin{tikzpicture}
\begin{axis}[
	xlabel={$r$},
	xmin=0, xmax=0.3,
	ymin=0, ymax=0.01,
    ylabel={$F(r)$},
    xticklabel style={
    	/pgf/number format/precision=4,
    	/pgf/number format/fixed
	},
	yticklabel style={/pgf/number format/precision=4, /pgf/number format/fixed},
	scaled y ticks=false,
    label style={font=\small},
    ticklabel style={font=\footnotesize},
    y label style={at={(current axis.above origin)},rotate=-90,anchor=south,yshift=0.2cm},
    legend style={draw=none, font=\footnotesize},
    legend cell align={left},
    legend pos = outer north east
]
\addplot[blue!70!black, line width=0.75pt] table {Figure_Data/pricing/Fr1.dat};
\addlegendentry{$d=0.1$};
\addplot[yellow!70!black, line width=0.75pt] table {Figure_Data/pricing/Fr2.dat};
\addlegendentry{$d=0.11$};
\addplot[green!70!black, line width=0.75pt] table {Figure_Data/pricing/Fr3.dat};
\addlegendentry{$d=0.12$};
\addplot[red!70!black, line width=0.75pt] table {Figure_Data/pricing/Fr4.dat};
\addlegendentry{$d=0.13$};
\end{axis}
\end{tikzpicture}
\end{center}
\caption{The functions $F(r)$ for $\mu=0.1$ and $b=1$ for which $d_1=0.1125, \, d_2=0.0947$, and $d_{\text{max}}=1/2$.   
}\label{fig:plotsFcase2}
\end{figure}

\begin{figure}[h!]
\begin{center}
\begin{tikzpicture}
\begin{axis}[
	xlabel={$d$},
	xmin=0, xmax=0.5,
	ymin=0, ymax=0.5,
    ylabel={$F(r^*)$},
    xticklabel style={
    	/pgf/number format/precision=4,
    	/pgf/number format/fixed,
	},
    scaled x ticks=false,
    label style={font=\small},
    ticklabel style={font=\footnotesize},
    y label style={at={(current axis.above origin)},rotate=-90,anchor=south,yshift=0.2cm},
]
\addplot[blue!70!black, line width=0.75pt] table {Figure_Data/pricing/optrevenue.dat};
\end{axis}
\end{tikzpicture}
\end{center}
\caption{The optimal revenue $F(r^*)$ for $\mu=0.5$ and $b=1$.   
}\label{fig:optrev1}
\end{figure}
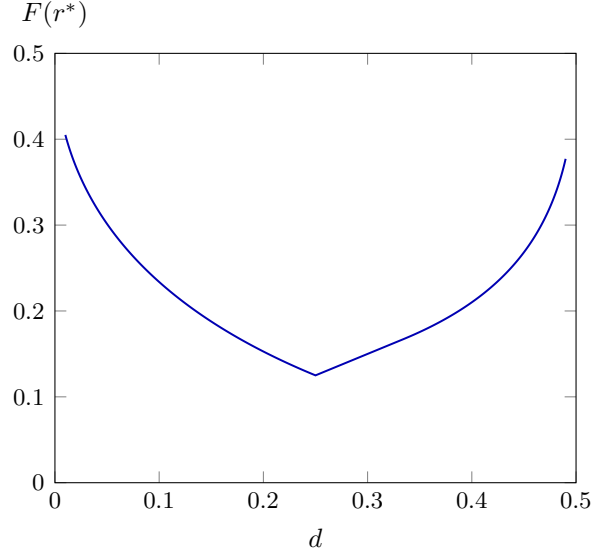

\section{Proofs of distribution-free stop-loss bounds}\label{app: stoplossthms}

\begin{proof}[Proof of Theorem~\ref{th:stoplossrein}]
We will show via primal-dual reasoning that the stated stop-loss formulas are tight upper bounds. We now consider the measurable function $\phi(z,s)$. Under $\mathcal{P}_{(\mu,b,d)}$ ambiguity of the random variable $S$ we now need to solve
\begin{equation}\label{eq:primalstop}
\begin{aligned}
&\! \sup_{\mathbb{P}\in \mathcal{M}^+} &  &\int_s \phi(z,s){\rm d} \mathbb{P}(s)\\
&\text{s.t.} &      & \int_s {\rm d}\mathbb{P}(s)=1, \ \int_s s{\rm d}\mathbb{P}(s)=\mu, \ \int_s |s-\mu|{\rm d}\mathbb{P}(s)=d, 
\end{aligned}
\end{equation}
 which is a semi-infinite linear program with three equality constraints.


Consider the dual of \eqref{eq:primalstop},
\begin{equation}\label{eq:dualstop}
\begin{aligned}
&\inf_{\lambda_0,\lambda_1,\lambda_2} &  &\lambda_0 + \lambda_1 \mu+\lambda_2 d\\
&\text{s.t.} &      & \phi(z,s)\leq \lambda_0  +\lambda_1 s+\lambda_2 |s-\mu|, \ \forall s\in[0,b].
\end{aligned}
\end{equation}
Define $F(s)\coloneqq\lambda_0  +\lambda_1 s+\lambda_2 |s-\mu|$. Then the inequality in \eqref{eq:dualstop} can be written as $\phi(z,s)\leq F(s)$, $\forall s$, i.e.~$F(s)$ majorizes the 'staircase' function $\phi(z,s)$. Note that $F(s)$ has a 'kink' at $x=\mu$. The dual problem has three variables, and therefore there exists a majorant that touches $\phi(z,s)$ at three or fewer points. Since $F(s)$ is piecewise linear with a 'wedge' shape there are six candidate scenarios, which are displayed in Figure~\ref{fig:majorsstop}. When $m+z\leq\mu$, $F(s)=1$ and touches $\phi(z,s)$ in $[m+z,b]$ (scenario 1a), or $F(s)$ touches $\phi(z,s)$ in $\{0,m+z,b\}$ (scenario 1b). When $z\leq\mu\leq m+z$, $F(s)$ touches in $\{0\}\cup[\mu,m+z]$ (scenario 2a) or in $\{0\}\cup[m+z,b]$ (scenario 2b). Finally, if $\mu\leq z\leq m+z$, $F(s)$ coincides with $\phi(z,s)$ in $[0,\mu]\cup\{m+z\}$ (scenario 3a) or in $\{0\}\cup[m+z,b]$ (scenario 3b).

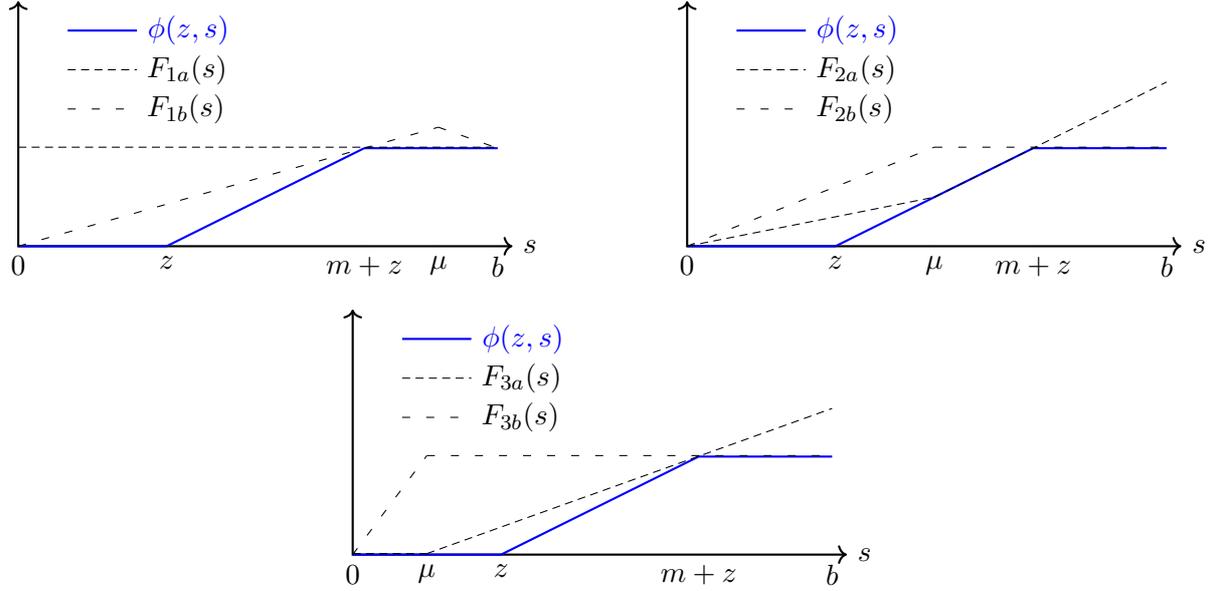
\begin{figure}[h!]
\begin{center}
\begin{minipage}[b]{0.45\linewidth}
\begin{tikzpicture}[scale=1.3]
\draw [<->,thick] (0,2.5) node (yaxis) [above] {}
        |- (5,0) node (xaxis) [right] {$s$};
        \draw[blue,thick] (0.5,2.2) -- (1.2,2.2) node[right] {$\phi(z,s)$};
        \draw[densely dashed] (0.5,1.8) -- (1.2,1.8) node[right] {$F_{1a}(s)$};
        \draw[loosely dashed] (0.5,1.4) -- (1.2,1.4) node[right] {$F_{1b}(s)$};
        \coordinate (z) at (1.5,0);
        \coordinate (mz) at (3.5,0);
        \coordinate (mu) at (4.25,0);
        \draw[blue, thick] (0,0) -- (z);
        \draw[blue, thick] (z) -- (3.5,1);
        \draw[blue, thick] (3.5,1) -- (4.85,1);
        \draw[densely dashed] (0,1.01) -- (4.85,1.01);
        \draw[loosely dashed] (0,0) -- (3.5,1);
        \draw[loosely dashed] (3.5,1) -- (4.25,1.214);
        \draw[loosely dashed] (4.25,1.214) -- (4.85,1);
        \draw[dotted] (z) -- (z) node[below] {$z$};
        \draw[dotted] (mz) -- (mz) node[below] {$m+z$};
        \draw[dotted] (0,0) -- (0,0) node[below] {$0$};
        \draw[dotted] (4.85,0) --(4.85,0) node[below] {$b$};
        \draw[dotted] (mu) --(mu) node[below] {$\mu$};

\end{tikzpicture}
\end{minipage}
\hfill
\begin{minipage}[b]{0.45\linewidth}
\begin{tikzpicture}[scale=1.3]
\draw [<->,thick] (0,2.5) node (yaxis) [above] {}
        |- (5,0) node (xaxis) [right] {$s$};
        \draw[blue,thick] (0.5,2.2) -- (1.2,2.2) node[right] {$\phi(z,s)$};
        \draw[densely dashed] (0.5,1.8) -- (1.2,1.8) node[right] {$F_{2a}(s)$};
        \draw[loosely dashed] (0.5,1.4) -- (1.2,1.4) node[right] {$F_{2b}(s)$};
        \coordinate (z) at (1.5,0);
        \coordinate (mz) at (3.5,0);
        \coordinate (mu) at (2.5,0);
        \draw[blue, thick] (0,0) -- (z);
        \draw[blue, thick] (z) -- (3.5,1);
        \draw[blue, thick] (3.5,1) -- (4.85,1);
        
        \draw[densely dashed] (0,0)--(2.5,0.5);
        \draw[densely dashed] (2.5,0.5)--(4.85,1.675);
        
        \draw[loosely dashed] (0,0)--(2.5,1.01);
        \draw[loosely dashed] (2.5,1.01)--(4.85,1.01);

        \draw[dotted] (z) -- (z) node[below] {$z$};
        \draw[dotted] (mz) -- (mz) node[below] {$m+z$};
        \draw[dotted] (0,0) -- (0,0) node[below] {$0$};
        \draw[dotted] (4.85,0) --(4.85,0) node[below] {$b$};
        \draw[dotted] (mu) --(mu) node[below] {$\mu$};

\end{tikzpicture}
\end{minipage}

\begin{minipage}[b]{0.45\linewidth}
\begin{tikzpicture}[scale=1.3]
\draw [<->,thick] (0,2.5) node (yaxis) [above] {}
        |- (5,0) node (xaxis) [right] {$s$};
        \draw[blue,thick] (0.5,2.2) -- (1.2,2.2) node[right] {$\phi(z,s)$};
        \draw[densely dashed] (0.5,1.8) -- (1.2,1.8) node[right] {$F_{3a}(s)$};
        \draw[loosely dashed] (0.5,1.4) -- (1.2,1.4) node[right] {$F_{3b}(s)$};
        \coordinate (z) at (1.5,0);
        \coordinate (mz) at (3.5,0);
        \coordinate (mu) at (0.75,0);
        \draw[blue, thick] (0,0) -- (z);
        \draw[blue, thick] (z) -- (3.5,1);
        \draw[blue, thick] (3.5,1) -- (4.85,1);
        
        \draw[densely dashed] (0,0.01)--(0.75,0.01);
        \draw[densely dashed] (0.75,0.01)--(4.85,1.491);
        
        \draw[loosely dashed] (0,0)--(0.75,1.01);
        \draw[loosely dashed] (0.75,1.01)--(4.85,1.01);

        \draw[dotted] (z) -- (z) node[below] {$z$};
        \draw[dotted] (mz) -- (mz) node[below] {$m+z$};
        \draw[dotted] (0,0) -- (0,0) node[below] {$0$};
        \draw[dotted] (4.85,0) --(4.85,0) node[below] {$b$};
        \draw[dotted] (mu) --(mu) node[below] {$\mu$};

\end{tikzpicture}
\end{minipage}

\caption{Scenario 1 and the majorizing functions $F_{1a}(x)$ and $F_{1b}(x)$ under scenarios 1a and 1b, respectively. Scenario 2 and the majorizing functions $F_{2a}(x)$ and $F_{2b}(x)$ under scenarios 2a and 2b, respectively. Scenario 3 and the majorizing functions $F_{3a}(x)$ and $F_{3b}(x)$ under scenarios 3a and 3b, respectively.}\label{fig:majorsstop}
\end{center}
\end{figure}

Scenario 1a implies that $F(0)=F(m+z)=F(\mu)=F(b)=m$, and hence $\lambda_0=m,\, \lambda_1=\lambda_2=0$ with objective value $m$. It is clear that the optimal primal objective value is also equal to $m$ as the primal solution can only assign probability to values greater than or equal to $m+z$ (which is a consequence of complementary slackness).

Scenario 1b implies $F(0)=0, \, F(m+z)=F(b)=m,$ which gives
\begin{equation*}
    \lambda_0=\frac{m\mu(b-(m+z))}{2(m+z)(b-\mu)}, \quad \lambda_1=\frac{m(b+m+z-2\mu)}{2(m+z)(b-\mu)}, \quad \lambda_2=\frac{m(m+z-b)}{2(m+z)(b-\mu)},
\end{equation*}
and objective value
\begin{equation*}
    \lambda_0+\lambda_1 \mu + \lambda_2 d = \frac{m}{m+z}\left(\mu-\frac{d(b-(m+z))}{2(b-\mu)}\right).
\end{equation*}
Solving the primal problem \eqref{eq:primalstop} with probability masses on the points $\{0,m+z,b\}$ gives
\begin{equation*}
    \int_{s}\phi(z,s)\,{\rm d}\mathbb{P}(s)=m\left(\frac{2\mu+bd/(\mu-b)}{2(m+z)}+\frac{d}{2(b-\mu)}\right)=\frac{m}{m+z}\left(\mu-\frac{d(b-(m+z))}{2(b-\mu)}\right).
\end{equation*}
Since primal and dual feasible solutions have the same objective value we have strong duality and hence found the optimal solutions.

Scenario 2a implies $F(0)=0,\,F(\mu)=\mu-z,\,F(m+z)=m$ which gives
\begin{equation*}
    \lambda_0=-\frac{z}{2}, \quad \lambda_1 = 1-\frac{z}{2\mu}, \quad \lambda_2 = \frac{z}{2\mu},
\end{equation*}
with objective value
\begin{equation*}
    \lambda_0 + \lambda_1 \mu + \lambda_2 d  = z\left(\frac{d}{2\mu}-1\right)+\mu.
\end{equation*}
Solving the optimal probabilities for the primal problem \eqref{eq:primalstop} with masses on $\{0,\mu,m+z\}$ indeed gives
\begin{equation*}
    \int_{s}\phi(z,s)\,{\rm d}\mathbb{P}(s)=(\mu-z)\left(1-\frac{d(m+z)}{2\mu(m+z-\mu)}\right)+m\left(\frac{d}{2(m+z\mu)}\right)= z \left( \frac{d}{2\mu}-1 \right) +\mu.
\end{equation*}

Scenario 2b implies that $F(0)=0, \, F(\mu)=F(m+z)=F(b)=m$, which gives as the dual feasible solution
\begin{equation*}
    \lambda_0=\frac{m}{2},\quad\lambda_1=\frac{m}{2\mu}, \quad \lambda_2 = -\frac{m}{2\mu},
\end{equation*}
and objective value
\begin{equation*}
    \lambda_0 + \lambda_1 \mu + \lambda_2 d  = m \left( 1-\frac{d}{2\mu} \right).
\end{equation*}
Solving for the corresponding optimal probabilities of \eqref{eq:primalstop} confirms that  
\begin{equation*}
    \int_{s}\phi(z,s)\,{\rm d}\mathbb{P}(s)=p_{(m+z)}m + p_b m=m \left(1-\frac{d}{2\mu} \right).
\end{equation*}

Scenario 3a implies $F(0)=0,\,F(\mu)=0,\,F(m+z)=m$ which gives
\begin{equation*}
    \lambda_0=-\frac{m\mu}{2(m+z-\mu)}, \quad \lambda_1 = \frac{m}{2(m+z-\mu)}, \quad \lambda_2 = \frac{m}{2(m+z-\mu)},
\end{equation*}
with objective value
\begin{equation*}
    \lambda_0 + \lambda_1 \mu + \lambda_2 d  = \frac{dm}{2(m+z-\mu)}.
\end{equation*}
Solving the optimal probabilities for the primal problem \eqref{eq:primalstop} with masses on $\{0,\mu,m+z\}$ indeed gives
\begin{equation*}
    \int_{s}\phi(z,s)\,{\rm d}\mathbb{P}(s)=p_{(m+z)}m=\frac{dm}{2(m+z-\mu)}.
\end{equation*}

Scenario 3b implies that $F(0)=0, \, F(\mu)=F(m+z)=F(b)=m$, which again gives as the dual feasible solution
\begin{equation*}
    \lambda_0=\frac{m}{2},\quad\lambda_1=\frac{m}{2\mu}, \quad \lambda_2 = -\frac{m}{2\mu},
\end{equation*}
and objective value
\begin{equation*}
    \lambda_0 + \lambda_1 \mu + \lambda_2 d  = m \left(1-\frac{d}{2\mu} \right).
\end{equation*}
Solving for the corresponding optimal probabilities of \eqref{eq:primalstop} on $\{0,m+z,b\}$ confirms that  
\begin{equation*}
    \int_{s}\phi(z,s)\,{\rm d}\mathbb{P}(s)=p_{(m+z)}m + p_b m=m(1-\frac{d}{2\mu}).
\end{equation*}

The proof of the first part of the theorem is then completed by taking the minimum for each scenario. The second part is an immediate consequence of upper bound~(8) in \cite{postek2018robust}, which is a result that was already shown by \cite{ben1972more}.

\end{proof}

\begin{proof}[Proof of Theorem~\ref{thm: aggregateclaim}]
We consider the following ambiguity set for the distribution of \(S\):
\begin{equation} \label{eq: aggregated ambig2}
\cG = \{ \bP \mid \bP\left(S \in \left[ 0, \; \bar{b} \right] \right) = 1, \enskip \bE_\bP \left[S \right] = \bar{\mu}, \enskip \bE\left[ \left| S - \bar{\mu} \right| \right] \leq \hat{d} \}.
\end{equation}

We can deduce from $\cF\subseteq\cG$ and Theorem \ref{th:stoplossrein} that
\[\sup_{\bP \in \cF} \bE_{\bP}\Big[ \phi\Big(z,\sum_{i=1}^n X_i \Big)\Big] \leq \sup_{\bP \in \cG} \bE_{\bP} [\phi\left(z, S \right)] \hspace{6.28cm} \]
\[\hspace{2.7cm} = \begin{cases} \max_{d \in \left[0, \hat{d}\right]} \Big\{ \min \{m, \: \frac{m}{m+z}(\mu-\frac{d(\bar{b}-(m+z))}{2(\bar{b}-\bar{\mu})} \} \Big\}, & \text{ if } z\leq m+z \leq\bar{\mu},\\
\max_{d \in \left[0, \hat{d}\right]} \Big\{ \min \{m(1-\frac{d}{2\bar{\mu}}), \: z(\frac{d}{2\bar{\mu}}-1)+\bar{\mu}\} \Big\}, & \text{ if } z\leq \bar{\mu} \leq m+z\leq\bar{b},\\
\max_{d \in \left[0, \hat{d}\right]} \Big\{ \min \{m(1-\frac{d}{2\bar{\mu}}), \:\frac{dm}{2(m+z-\bar{\mu})}\} \Big\}, & \text{ if } \bar{\mu}\leq z\leq m+z\leq\bar{b}.\\
\end{cases}\]
We will now solve the maximization problems over \(d\) explicitly. For the instance with $m+z\leq\bar{\mu}$ the problem is easily solved by recognizing that the distribution with probability mass 1 on $\bar{\mu}$ is a member of $\cG$, and therefore the maximum $m$ will be paid almost surely.

For the second case, note that we take the minimum over two linear functions of \(d\), an increasing and a decreasing one. Therefore, the global maximum is at the intersection of these functions, and the optimal \(d\) is thus either \(\hat{d}\) or \(d^*\), where \(d^*\) is such that
\[m(1-\frac{d}{2\bar{\mu}}) = z(\frac{d}{2\bar{\mu}}-1)+\bar{\mu}.\]
Solving this equation yields
\[d^* = \frac{2\bar{\mu}(m+z-\bar{\mu})}{m+z}.\]
Note that \(\hat{d}\) is the optimal solution when
\[z(\frac{d}{2\bar{\mu}}-1)+\bar{\mu}< m(1-\frac{d}{2\bar{\mu}}),\]
as this means that \(\hat{d} < d^*\). Therefore, we find that
\begin{align*}
\max_{d \in \left[0, \hat{d}\right]} \Big\{ \min \{m(1-\frac{d}{2\bar{\mu}}), \: z(\frac{d}{2\bar{\mu}}-1)+\bar{\mu}\} \Big\} & = \min \{ m(1-\frac{d^*}{2\bar{\mu}}), \: z(\frac{\hat{d}}{2\bar{\mu}}-1)+\bar{\mu} \} \nonumber \\ 
& = \min \{ \frac{m\bar{\mu}}{m+z}, \; z(\frac{\hat{d}}{2\bar{\mu}}-1)+\bar{\mu} \}. 
\end{align*} 
Finally, the third case can be solved in the exact same way, resulting in the established theorem.
\end{proof}

\section{Additional results on ambiguous chance constraints} \label{app: acc results}

\begin{proof}[Proof of Theorem \ref{thm: single lhs cc}]
Rewriting \eqref{eq: lhs affine cc} we find
\begin{align*}
\inf_{\bP \in \cP} \bP \left[\left(\bar{\mathbf{a}} + Z \hat{\mathbf{a}} \right)^\top \mathbf{x} \leq h  \right] = \enskip & 1 - \sup_{\bP \in \cP} \bP \left[ \left(\bar{\bm{a}} + Z \hat{\bm{a}} \right)^\top \mathbf{x} \leq h \right] \\
= \enskip & 1 - \sup_{\bP \in \cP} \bP \left[ Z \cdot \hat{\bm{a}}^\top \mathbf{x}  > h - \bar{\bm{a}}^\top \mathbf{x} \right].
\end{align*}
Thus, clearly, \eqref{eq: lhs affine cc} is equivalent to
\begin{equation} \label{eq: lhs affine proof 2} 
\sup_{\bP \in \cP} \bP \left[ Z \cdot \hat{\bm{a}}^\top \mathbf{x} > h - \bar{\bm{a}}^\top \mathbf{x} \right] \leq \epsilon,
\end{equation}
and since \(\epsilon \in \left(0, \frac{1}{2}\right)\), it must hold that \(h - \bar{\bm{a}}^\top \mathbf{x} > 0\), because for any \(h - \bar{\bm{a}}^\top \mathbf{x} \leq 0\), the supremum is at least \(\frac{1}{2}\) by Theorem \ref{theorem:main}. Given that \(h - \bar{\bm{a}}^\top \mathbf{x} > 0\) we thus find
\[\sup_{\bP \in \cP} \bP \left[ Z \cdot \hat{\bm{a}}^\top \mathbf{x}  > h - \bar{\bm{a}}^\top \mathbf{x} \right] = \begin{cases} \min\left\{\frac{d \cdot | \hat{\bm{a}}^\top \mathbf{x} |}{2(h - \bar{\bm{a}}^\top \mathbf{x})}, \; 1 - \frac{d}{2} \right\} & \text{ if } h - \bar{\bm{a}}^\top \mathbf{x} < | \hat{\bm{a}}^\top \mathbf{x}| \\ 0 & \text{ if } h - \bar{\bm{a}}^\top \mathbf{x} \geq | \hat{\bm{a}}^\top \mathbf{x} |. \end{cases}\] 
Since \(1 - \frac{d}{2} \geq \frac{1}{2}\), it must hold that \(\min\left\{\frac{d \cdot | \hat{\bm{a}}^\top \mathbf{x} |}{2(h - \bar{\bm{a}}^\top \mathbf{x})}, \; 1 - \frac{d}{2} \right\} = \frac{d \cdot |\hat{\bm{a}}^\top \mathbf{x} |}{2(h - \bar{\bm{a}}^\top \mathbf{x})}\) for any \(\mathbf{x}\) that satisfies \eqref{eq: lhs affine proof 2}. Rewriting the case where \(h - \bar{\bm{a}}^\top \mathbf{x} < | \hat{\bm{a}}^\top \mathbf{x}|\) we find
\begin{align*}
\frac{d \cdot | \hat{\bm{a}}^\top \mathbf{x} |}{2(h - \bar{\bm{a}}^\top \mathbf{x})} \leq \epsilon \iff & \bar{\bm{a}}^\top \mathbf{x} + \frac{d}{2 \epsilon} \cdot | \hat{\bm{a}}^\top \mathbf{x} | \leq h.
\end{align*}
We can thus combine both cases, such that \eqref{eq: lhs affine proof 2} is equivalent to
\[\bar{\bm{a}}^\top \mathbf{x} + \min \left\{1, \; \frac{d}{2\epsilon}\right\} \cdot |\hat{\bm{a}}^\top \mathbf{x} | \leq h,\]
where it should be noted that this implies \(h - \bar{\bm{a}}^\top \mathbf{x} > 0\), which is therefore redundant. 
\end{proof}

\begin{theorem} \label{thm: convex rhs cc u}
Let \(g : \bR^n \mapsto \bR\), \(h \in \bR\) and let \(Z\) be an \(1\)-dimensional random variable whose distribution lies in the ambiguity set
\[\cP = \left\{\bP : \bP \left[Z \in \left[-1, b\right]\right] = 1, \enskip \bE \left[Z\right] = 0, \enskip \bE \left[ \left| Z \right| \right] = d \right\},\]
for some \(d \in \left[0, \frac{2u}{1+b} \right]\).
For any \(\epsilon \in \left(0, \frac{1}{1 + b}\right)\) and \(\mathbf{x} \in \bR^n\) it holds that
\begin{equation*} 
\inf_{\bP \in \cP} \bP \left[g(\mathbf{x}) + Z \leq 0 \right] \geq 1 - \epsilon,
\end{equation*}
if and only if
\begin{equation*}
g(\mathbf{x}) + \min \left\{b, \frac{d}{2 \epsilon} \right\} \leq 0.
\end{equation*}
\end{theorem}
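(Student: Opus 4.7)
My plan is to follow the same template as the proof of Theorem~\ref{thm: convex rhs cc}, adapted to the asymmetric support \([-1,b]\). First I would rewrite the ambiguous chance constraint as an upper bound on a worst-case tail probability,
\[\inf_{\bP \in \cP}\bP\left[g(\mathbf{x}) + Z \leq 0\right] \geq 1 - \epsilon \iff \sup_{\bP \in \cP}\bP\left[Z > -g(\mathbf{x})\right] \leq \epsilon,\]
and then invoke the extension of Theorem~\ref{theorem:main} to general support \([a,b]\) (noted in the remark following that theorem) with \(a = -1\), \(\mu = 0\) and threshold \(t = -g(\mathbf{x})\). After this translation the four pieces of Theorem~\ref{theorem:main} take a particularly simple form because \(\mu - a = 1\) and \(b - \mu = b\).

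With these parameters the worst-case tail probability is \(1\) for \(t \leq \tau_1\), equals the decreasing expression \(\frac{1}{t+1} - \frac{d(b-t)}{2(t+1)b}\) on \([\tau_1,0]\), the constant \(1 - d/2\) on \([0,\tau_2]\), and the decreasing piece \(d/(2t)\) on \([\tau_2,b]\), where \(\tau_2 = d/(2-d)\). The crucial observation is that the assumption \(d \leq 2b/(1+b)\) (required for \(\cP\) to be nonempty) implies \(1 - d/2 \geq 1/(1+b)\), so combined with \(\epsilon < 1/(1+b)\) we obtain \(\epsilon < 1 - d/2\). This rules out all three pieces in which the tail bound is at least \(1 - d/2\); hence any feasible \(\mathbf{x}\) must force \(-g(\mathbf{x})\) strictly beyond \(\tau_2\), into the tail region where the bound equals \(d/(2(-g(\mathbf{x})))\), or beyond \(b\), where the tail event is impossible.

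Feasibility therefore reduces to the requirement that at least one of \(-g(\mathbf{x}) \geq b\) and \(d/(2(-g(\mathbf{x}))) \leq \epsilon\) holds. The second inequality is equivalent to \(-g(\mathbf{x}) \geq d/(2\epsilon)\), so taking the less restrictive of the two yields
\[-g(\mathbf{x}) \geq \min\left\{b,\ \frac{d}{2\epsilon}\right\} \iff g(\mathbf{x}) + \min\left\{b,\ \frac{d}{2\epsilon}\right\} \leq 0,\]
which is the claimed reformulation. I would close by noting that when the active branch is \(d/(2\epsilon) \leq b\), the resulting value \(-g(\mathbf{x}) = d/(2\epsilon)\) does lie in \([\tau_2,b]\), since \(\epsilon < 1/(1+b) \leq 1 - d/2\) gives \(d/(2\epsilon) > d/(2-d) = \tau_2\), so the formula \(d/(2t)\) is the correct one to use.

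The main technical obstacle will be cleanly eliminating the constant middle piece \(1 - d/2\) of the tail bound; the hypothesis \(\epsilon < 1/(1+b)\) is calibrated exactly so that this elimination is automatic, and without it the theorem would not be clean. Once this is handled, the rest collapses to the same min-of-two-linear-conditions argument as in Theorem~\ref{thm: convex rhs cc}, with the support bound \(1\) of the symmetric case replaced by the general upper bound \(b\).
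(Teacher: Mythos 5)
Your proposal is correct and follows exactly the route the paper intends: the paper proves only the symmetric-support case (Theorem~\ref{thm: convex rhs cc}) and states this asymmetric version without proof as a ``straightforward'' extension, and your argument is precisely that extension --- reduce to $\sup_{\bP\in\cP}\bP[Z>-g(\mathbf{x})]\leq\epsilon$, apply the shifted version of Theorem~\ref{theorem:main} with $a=-1$, $\mu=0$, use $\epsilon<\tfrac{1}{1+b}\leq 1-\tfrac{d}{2}$ to discard all but the far-tail piece, and combine the two surviving conditions into the $\min\{b,\tfrac{d}{2\epsilon}\}$ threshold. You also correctly read the theorem's ``$\tfrac{2u}{1+b}$'' as the nonemptiness bound $\tfrac{2b}{1+b}$ (a typo in the statement), which is exactly the hypothesis your argument needs.
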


\begin{theorem} \label{thm: joint indep cc}
Let \(g_i : \bR^n \mapsto \bR\) for \(i = 1, \ldots, m\) and let \(\bm{Z}\) be an \(m\)-dimensional random variable whose distribution lies in the ambiguity set
\[\cP = \left\{\bP : \bP \left[Z_i \in \left[-1, 1\right]\right] = 1, \enskip \bE \left[Z_i\right] = 0, \enskip \bE \left[ \left| Z_i \right| \right] = d_i \enskip \forall i, \quad Z_i \bot Z_j \enskip \forall i \neq j \right\},\]
for some \(\mathbf{d} \in \bR^m\) such that \(d_i \in \left[0, 1\right]\) for all \(i\). Let \(\epsilon \in \left(0, \frac{1}{2} \right)\) and \(\mathcal{I}\) be the set of indices \(i\) such that \(\frac{d_i}{2 \epsilon} \leq 1\).
For any \(\mathbf{y} \in \bR^n\) it holds that
\begin{equation} \label{eq: joint indep cc}
\inf_{\bP \in \cP} \bP \left[g_i(\mathbf{y}) + Z_i \leq 0 \quad \forall i\right] \geq 1 - \epsilon,
\end{equation}
if
\begin{subequations} \label{eq: joint indep cc final}
\begin{empheq}[left={\empheqlbrace\,}]{align}
\sum_{i \in \mathcal{I}} \log \left[1 - \frac{d_i}{- 2 g_i(\mathbf{x})}\right] & \geq \log \left[ 1 - \epsilon \right]  \\
g_i(\mathbf{x}) + 1 & \leq 0 & i \not\in \mathcal{I} \\
g_i(\mathbf{x}) & < 0 & i \in \mathcal{I}&,
\end{empheq}
\end{subequations}
which is a convex set of constraints if all \(g_i\) are convex functions.
\end{theorem}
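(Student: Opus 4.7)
The plan is to reduce the joint chance constraint to a product of univariate tail probabilities (made possible by the independence assumption imposed inside $\cP$), apply the mean-MAD tail bound of Theorem~\ref{theorem:main} to each factor, and then verify convexity of the resulting log-sum inequality. Because I will invoke the non-tight upper bound $d_i/(-2g_i(\mathbf{x}))$ on each tail rather than the full $\min\{d_i/(-2g_i(\mathbf{x})),\,1-d_i/2\}$ from Theorem~\ref{theorem:main}, the condition obtained will only be sufficient (not necessary), which is why the statement is one-sided in contrast to Theorems~\ref{thm: convex rhs cc} and~\ref{thm: single lhs cc}.

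The independence baked into $\cP$ means that every $\bP\in\cP$ is determined by its marginals $\bP_1,\ldots,\bP_m$, each of which lies in a mean-MAD ambiguity set on $[-1,1]$ with mean $0$ and MAD $d_i$. Hence the joint probability factorises as
\[\bP\left[\bigcap_i\{g_i(\mathbf{x})+Z_i\leq 0\}\right]=\prod_{i=1}^m \bP_i[g_i(\mathbf{x})+Z_i\leq 0],\]
and since the marginals can be chosen independently, the infimum over $\cP$ equals the product of the individual infima. Theorem~\ref{theorem:main} (exactly as reused in the proof of Theorem~\ref{thm: convex rhs cc}) then gives $\sup_{\bP_i}\bP_i[Z_i>-g_i(\mathbf{x})]\leq d_i/(-2g_i(\mathbf{x}))$ whenever $g_i(\mathbf{x})<0$, and $=0$ whenever $-g_i(\mathbf{x})\geq 1$. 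For $i\notin\mathcal{I}$ the explicit condition $g_i(\mathbf{x})+1\leq 0$ forces the corresponding factor to equal $1$ deterministically, so it drops out of the product. For $i\in\mathcal{I}$, the condition $g_i(\mathbf{x})<0$ combined with the univariate bound gives $\inf_{\bP_i}\bP_i[g_i(\mathbf{x})+Z_i\leq 0]\geq 1-d_i/(-2g_i(\mathbf{x}))$. Taking logarithms and summing, \eqref{eq: joint indep cc final} implies $\prod_i \inf_{\bP_i}\bP_i[g_i(\mathbf{x})+Z_i\leq 0]\geq 1-\epsilon$, which is exactly \eqref{eq: joint indep cc}.

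To verify that \eqref{eq: joint indep cc final} defines a convex set when each $g_i$ is convex, the two pointwise constraints $g_i(\mathbf{x})+1\leq 0$ and $g_i(\mathbf{x})<0$ are immediately convex (sub)level sets. For the log-sum, set $h(g):=\log(1-d/(-2g))=\log(-2g-d)-\log(-2g)$. A direct computation gives $h'(g)=-d/[g(2g+d)]$ and $h''(g)=d(4g+d)/[g(2g+d)]^2$ on the admissible domain $g<-d/2$; there $g(2g+d)>0$ while $4g+d<0$, so $h'<0$ and $h''<0$. Hence $h$ is concave and nonincreasing, and the standard composition rule for concavity implies that $h(g_i(\mathbf{x}))$ is concave in $\mathbf{x}$ whenever $g_i$ is convex. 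The superlevel set $\{\mathbf{x}:\sum_{i\in\mathcal{I}}h(g_i(\mathbf{x}))\geq \log(1-\epsilon)\}$ is therefore convex, and combining with the other (convex) constraints yields convexity of the full feasible region of \eqref{eq: joint indep cc final}.

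The main obstacle is the interface between the probabilistic and convex-analytic halves: the log-sum is finite only when $1-d_i/(-2g_i(\mathbf{x}))>0$, i.e.\ $g_i(\mathbf{x})<-d_i/2$, which is precisely the domain on which both the univariate Chebyshev-type bound is informative and $h$ is concave. I would make this implicit domain explicit in the writeup and check that it is consistent with the stated conditions $g_i(\mathbf{x})<0$ for $i\in\mathcal{I}$, since otherwise either the concavity argument could be invoked outside its domain or the implication ``log-sum feasibility $\Rightarrow$ product bound $\geq 1-\epsilon$'' could fail at the boundary.
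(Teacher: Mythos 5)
Your proof is correct and follows essentially the same route as the paper's: factorize the joint probability via independence, apply the univariate bound of Theorem~\ref{theorem:main} to each marginal, split the indices according to whether $d_i/(2\epsilon)\leq 1$, and take logarithms. Your explicit verification that $g\mapsto\log(1-d/(-2g))$ is concave and nonincreasing on $g<-d/2$ (and your observation that feasibility of the log-sum constraint implicitly forces $g_i(\mathbf{x})<-d_i/2$, strengthening the stated $g_i(\mathbf{x})<0$) goes beyond the paper, which asserts the convexity of \eqref{eq: joint indep cc final} without proof.
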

\begin{proof}
Using the pairwise independence of \(\bm{Z}\) we find
\begin{align}
\inf_{\bP \in \cP} \bP \left[g_i(\mathbf{x}) + Z_i \leq 0 \quad \forall i\right] = \enskip & \inf_{\bP \in \cP} \prod_{i=1}^m \bP \left[g_i(\mathbf{x}) + Z_i \leq 0 \right] \nonumber \\
= \enskip & \prod_{i=1}^m \inf_{\bP \in \cP} \bP \left[g_i(\mathbf{x}) + Z_i \leq 0 \right] \nonumber \\
= \enskip & \prod_{i=1}^m \left[1 - \sup_{\bP \in \cP} \bP \left[g_i(\mathbf{x}) + Z_i > 0 \right]\right] \nonumber \\
= \enskip & \prod_{i=1}^m \left[1 - \sup_{\bP \in \cP} \bP \left[Z_i > - g_i(\mathbf{x})  \right]\right]. \label{eq: joint indep cc proof 1}
\end{align}
From this, it readily follows that it must at least hold that \(\sup_{\bP \in \cP} \bP \left[Z_i >  - g_i(\mathbf{x})  \right] \leq \epsilon\) for all \(i\). From Theorem \ref{theorem:main} we consequently know that it must hold that \( - g_i(\mathbf{x}) > \bE \left[ Z_i \right] = 0 \) for all \(i\), as we know that \(\epsilon < \frac{1}{2}\).
Given that \(- g_i(\mathbf{x}) > 0\), we know that
\begin{equation} \label{eq: joint indep cc proof 2}
\sup_{\bP \in \cP} \bP \left[ Z_i > - g_i(\mathbf{x}) \right] = \begin{cases} \min \left\{\frac{d_i}{ - 2 g_i(\mathbf{x}) }, \: 1 - \frac{d_i}{2}\right\} & \text{ if } - g_i(\mathbf{x}) < 1 \\
0 & \text{ if } - g_i(\mathbf{x}) \geq 1. \end{cases}
\end{equation}
Since \( 1 - \frac{d_i}{2} \geq \frac{1}{2}\) it follows from \(\epsilon < \frac{1}{2}\) that it must hold for any feasible solution \(\mathbf{x}\) that
\[ \min \left\{\frac{d_i}{ - 2 g_i(\mathbf{x})}, \: 1 - \frac{d_i}{2} \right\} =  \frac{d_i}{ - 2 g_i(\mathbf{x})} \qquad \qquad i=1,\ldots,m. \]
Moreover, we note that \(\frac{d_i}{- 2 g_i(\mathbf{x})} \leq \epsilon\) is equivalent to \( - g_i(\mathbf{x}) \geq \frac{d_i}{2 \epsilon}\) and thus if \(\frac{d_i}{2 \epsilon} \geq 1\), imposing this is overly restrictive, as we know from \eqref{eq: joint indep cc proof 2} that the worst-case probability of violation is 0, not \(\epsilon\) in that situation. For all such \(i\), we thus simply require that \(g_i(\mathbf{x}) + 1 \leq 0\), such that
\[\sup_{\bP \in \cP} \bP \left[ Z_i > - g_i(\mathbf{x}) \right] = 0 \qquad \qquad \forall i \not\in \mathcal{I}.\]

Given this analysis, we find that \eqref{eq: joint indep cc proof 1} is equal to
\[\prod_{i \in \mathcal{I}} \left[ 1 - \sup_{\bP \in \cP} \bP \left[ Z_i > - g_i(\mathbf{x}) \right] \right] \leq \prod_{i \in \mathcal{I}} \left[ 1 - \frac{d_i}{ - 2 g_i(\mathbf{x})} \right],\]
and thus \eqref{eq: joint indep cc} holds if
\[ \begin{cases}
\sum_{i \in \mathcal{I}} \log \left[ 1 - \frac{d_i}{ - 2 g_i(\mathbf{x})} \right] \geq \log\left[1 - \epsilon\right] \\
g_i(\mathbf{x}) + 1 \leq 0 & i \not\in \mathcal{I} \\
g_i(\mathbf{x}) < 0 & i \in \mathcal{I}.
\end{cases} \]

\end{proof}

\begin{theorem} \label{thm: joint equal cc}
Let \(g_i : \bR^n \to \bR\) be convex for \(i=1,\ldots,m\) and let \(Z\) be a 1-dimensional random variable whose distribution lies in the ambiguity set
\[\cP = \left\{\bP : \bP \left[Z \in \left[-1, 1\right]\right] = 1, \enskip \bE \left[Z\right] = 0, \enskip \bE \left[ \left| Z \right| \right] = d \right\},\]
for some \(d \in \left[0, 1\right]\).
For any \(\epsilon \in \left(0, \frac{1}{2}\right)\) and \(\mathbf{x} \in \bR^n\) it holds that
\begin{equation} \label{eq: joint equal cc}
\inf_{\bP \in \cP} \bP \left[g_i(\mathbf{x}) + Z \leq 0 \quad \forall i\right] \geq 1 - \epsilon,
\end{equation}
if and only if
\begin{equation} \label{eq: joint equal cc final}
\max_i \left\{ g_i(\mathbf{x}) \right\} + \min \left\{1, \frac{d}{2 \epsilon} \right\} \leq 0.
\end{equation}
\end{theorem}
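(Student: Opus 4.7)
The plan is to reduce the joint ambiguous chance constraint to a single one by exploiting that the same random variable $Z$ appears in every constraint, and then invoke Theorem~\ref{thm: convex rhs cc} verbatim. Concretely, define $\tilde{g}(\mathbf{x}) := \max_i g_i(\mathbf{x})$. Since the event
\[\{g_i(\mathbf{x}) + Z \leq 0 \text{ for all } i\} = \{\tilde{g}(\mathbf{x}) + Z \leq 0\}\]
is a deterministic rewriting for every realization of $Z$, the two probabilities coincide under any $\bP$, and consequently
\[\inf_{\bP \in \cP} \bP\bigl[g_i(\mathbf{x}) + Z \leq 0 \text{ for all } i\bigr] = \inf_{\bP \in \cP} \bP\bigl[\tilde{g}(\mathbf{x}) + Z \leq 0\bigr].\]

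Applying Theorem~\ref{thm: convex rhs cc} to $\tilde{g}$ directly, the right-hand side is at least $1-\epsilon$ if and only if
\[\tilde{g}(\mathbf{x}) + \min\Bigl\{1,\; \frac{d}{2\epsilon}\Bigr\} \leq 0,\]
which is precisely \eqref{eq: joint equal cc final}. Note that Theorem~\ref{thm: convex rhs cc} was stated for a generic function $\tilde{g}$ without a convexity hypothesis, so no extra regularity on $\tilde{g}$ is needed for the equivalence. Convexity of the $g_i$ is only used afterwards to guarantee that $\tilde{g} = \max_i g_i$ is convex, hence that \eqref{eq: joint equal cc final} defines a convex constraint in $\mathbf{x}$; this mirrors the remark made after Theorem~\ref{thm: convex rhs cc}.

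There is essentially no obstacle in this proof: the only thing to verify carefully is that the event equivalence above is lossless in the worst-case sense, which holds because the reduction is pointwise and not distribution-dependent, so the same supremum over $\cP$ (and therefore the same infimum) is attained. The only bookkeeping to check is that the hypotheses of Theorem~\ref{thm: convex rhs cc}, namely $d \in [0,1]$ and $\epsilon \in (0, 1/2)$, transfer unchanged to the reduced problem, which they trivially do. Thus the proof is essentially a one-line reduction followed by a direct invocation of the earlier theorem.
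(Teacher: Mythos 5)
Your proof is correct and follows essentially the same route as the paper: both arguments collapse the joint event to the single event $\{\max_i g_i(\mathbf{x}) + Z \leq 0\}$ using that the same $Z$ appears in every constraint. The only difference is that you then invoke Theorem~\ref{thm: convex rhs cc} directly (which is indeed stated for an arbitrary $\tilde{g}$ with no convexity hypothesis), whereas the paper re-runs the same computation from Theorem~\ref{theorem:main} with $t = \min_i\{-g_i(\mathbf{x})\}$; your shortcut is valid and slightly cleaner.
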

\begin{proof}
Using the fact that every constraint features the same uncertain parameter \(Z\) we find
\begin{align*}
\inf_{\bP \in \cP} \bP \left[ g_i(\mathbf{x}) + Z \leq 0 \quad \forall i \right] = \enskip & \inf_{\bP \in \cP} \bP \left[Z \leq - g_i(\mathbf{x}) \quad \forall i \right] \\
= \enskip & \inf_{\bP \in \cP} \bP \left[Z \leq \min_i \left\{ - g_i(\mathbf{x}) \right\} \right] \\
= \enskip & 1 - \sup_{\bP \in \cP} \bP \left[ Z > \min_i \left\{ - g_i(\mathbf{x}) \right\} \right].
\end{align*}
We thus know that \eqref{eq: joint equal cc} is equivalent to
\begin{equation} \label{eq: joint equal cc intermed}
\sup_{\bP \in \cP} \bP \left[ Z > \min_i \left\{- g_i(\mathbf{x}) \right\} \right] \leq \epsilon,
\end{equation}
to which we can apply Theorem~\ref{theorem:main} with \(t = \min_i \left\{ - g_i(\mathbf{x}) \right\}\). Since we know \(\epsilon < \frac{1}{2}\), we once again find that it must hold that \(\min_i \left\{ - g_i(\mathbf{x}) \right\} > 0\) and thus by the same reasoning as in the proof of Theorem~\ref{thm: joint indep cc} we find that it must hold that
\[\sup_{\bP \in \cP} \bP \left[ Z > \min_i \left\{- g_i(\mathbf{x}) \right\} \right] = \begin{cases} \min \left\{\frac{d}{2 \min_i \left\{- g_i(\mathbf{x}) \right\}}, \; 1 - \frac{d}{2} \right\} & \text{ if } \min_i \{- g_i(\mathbf{x}) \} < 1 \\ 0 & \text{ if } \min_i \{- g_i(\mathbf{x}) \} \geq 1. \end{cases} \]
Once again, it must hold that \( \min \left\{\frac{d}{2 \min_i \left\{- g_i(\mathbf{x}) \right\}}, \; 1 - \frac{d}{2} \right\} = \frac{d}{2 \min_i \left\{- g_i(\mathbf{x}) \right\}}\) and subsequently both cases can be combined. We thus find that \eqref{eq: joint equal cc} is equivalent to
\[\min_i \left\{- g_i(\mathbf{x}) \right\} \geq \min \left\{1, \; \frac{d}{2 \epsilon} \right\},\]
which makes the condition \(\min_i \left\{- g_i(\mathbf{x}) \right\} > 0\) redundant. This constraint is equivalent to
\[\max_i \left\{ g_i(\mathbf{x}) \right\} + \min \left\{1, \frac{d}{2 \epsilon} \right\} \leq 0,\]
which concludes the proof.
\end{proof}

\end{document}